\documentclass[10pt]{amsart}
\usepackage{amsmath}
\usepackage{amsfonts}
\usepackage{amssymb}
\usepackage{amsthm}
\usepackage{url}
\usepackage[all]{xy}
\usepackage{dsfont} 
\usepackage{graphicx}
\usepackage{caption}
\usepackage{subcaption}
\usepackage{comment} 
\usepackage{stmaryrd}
\usepackage{hyperref}
\usepackage{todonotes}
\usepackage{color}
\usepackage{enumerate,tikz-cd}
\usepackage{fixltx2e}
\usepackage{MnSymbol}
\usepackage{mathtools}
\allowdisplaybreaks

% \scriptscriptstyle

\numberwithin{equation}{section}

\newtheorem{proposition}{Proposition}[section]
\newtheorem{lemma}[proposition]{Lemma}
\newtheorem{theorem}[proposition]{Theorem}
\newtheorem{corollary}[proposition]{Corollary}

\theoremstyle{definition}
\newtheorem{remark}[proposition]{Remark}
\newtheorem{definition}[proposition]{Definition}
\newtheorem{example}[proposition]{Example}

\DeclareMathOperator{\Bl}{Bl}

\DeclareMathOperator{\Aut}{Aut}

\DeclareMathOperator{\NA}{NA}

\DeclareMathOperator{\ord}{ord}

\DeclareMathOperator{\Vol}{Vol}
\DeclareMathOperator{\val}{val}
\DeclareMathOperator{\di}{div}
\DeclareMathOperator{\red}{red}
\DeclareMathOperator{\Amp}{Amp}
\DeclareMathOperator{\dmu}{d\mu}
\DeclareMathOperator{\Ent}{Ent}

% journal macros for bibtex

\newcommand{\R}{\mathbb{R}}
\newcommand{\C}{\mathbb{C}}
\newcommand{\A}{\mathbb{A}}

\newcommand{\Q}{\mathbb{Q}}

\newcommand{\pr}{\mathbb{P}}
\renewcommand{\epsilon}{\varepsilon}

\newcommand{\M}{\mathcal{M}}

\newcommand{\scO}{\mathcal{O}}

\newcommand{\mfa}{\mathfrak{a}}

\newcommand{\B}{\mathcal{B}}

\newcommand{\E}{\mathcal{E}}
\renewcommand{\L}{\mathcal{L}}
\newcommand{\X}{\mathcal{X}}
\newcommand{\Y}{\mathcal{Y}}
\renewcommand{\H}{\mathcal{H}}

\renewcommand{\phi}{\varphi}

\newcommand\MA{\mathrm{MA}}

\newcommand\an{^{\mathrm{an}}}

\newcommand\triv{{\mathrm{triv}}}

\pagestyle{headings} \setcounter{tocdepth}{1}

\title{On divisorial stability of finite covers}
\author{Ruadha{\'i} Dervan}
\address{School of Mathematics and Statistics, University of Glasgow, University Place, Glasgow, G12 8QQ, United Kingdom}
\email{ruadhai.dervan@glasgow.ac.uk}

\author{Theodoros Stylianos Papazachariou}
\address{Isaac Newton Institute, University of Cambridge, 20 Clarkson Road, Cambridge, CB3 0EH, United Kingdom}
\email{tsp35@cam.ac.uk}
%\date{December 2022}

\begin{document}
	\maketitle
	
	\begin{abstract}
		Divisorial stability of a polarised variety is a stronger---but conjecturally equivalent---variant of uniform K-stability introduced by Boucksom--Jonsson. Whereas uniform K-stability is defined in terms of test configurations, divisorial stability is defined in terms of convex combinations of divisorial valuations on the variety.
		
		We consider the behaviour of divisorial stability under finite group actions, and prove that equivariant divisorial stability of a polarised variety is equivalent to log divisorial stability of its quotient. We use this and an interpolation technique to give a general construction of equivariantly divisorially stable polarised varieties.
		
	\end{abstract}
	
	\section{Introduction}\label{intro}
	
	The theory of K-stability of Fano varieties has achieved its prominence due to its links both with K\"ahler geometry (through the existence of K\"ahler--Einstein metrics \cite{CDS,RB,GT}) and moduli theory (through the construction of  moduli spaces of K-polystable Fano varieties, see \cite{xu-survey, Liu_Xu_Zhuang_2022}). There are essentially two reasons why the algebro-geometric theory of K-stability of Fano varieties has been so successful: the first is the interplay with birational geometry and the minimal model programme (originating in \cite{YO, lixu}), while the second is the reinterpretation of K-stability in terms of divisorial valuations on the Fano variety \cite{fujita, li}.
	
	K-stability is also of interest for general polarised varieties (projective varieties endowed with an ample line bundle), and in this situation there is still a substantial literature  linking K-stability with K\"ahler geometry through the existence of constant scalar curvature K\"ahler metrics (namely the \emph{Yau--Tian--Donaldson conjecture} \cite{STY,  GT,SD-toric}). However, the algebro-geometric theory of K-stability of general polarised varieties is considerably less developed than its Fano counterpart and relatively little is known. While one cannot expect birational geometry to play as significant a role in this generality, it is still reasonable to attempt to use valuative tools in studying K-stability of arbitrary polarised varieties. With this in mind, the first author and Legendre introduced a notion of \emph{valuative stability} of a polarised variety \cite{dervan-legendre}, which should be \emph{strictly weaker} than K-stability for general polarised varieties, although it is equivalent in the Fano situation.
	
	The more powerful notion of \emph{divisorial stability}, very recently introduced by Boucksom--Jonsson \cite{bj_non_arch_ii}, associates numerical invariants to  \emph{convex combinations} of divisorial valuations. By their work, divisorial stability implies---and is conjecturally equivalent to---\emph{uniform} K-stability, which in turn is conjecturally equivalent to the existence of constant scalar curvature K\"ahler metrics when the variety is smooth. In fact, the same conjecture that would lead to a resolution of the ``uniform version'' of the Yau--Tian--Donaldson conjecture (through \cite{BBJ, chi-li-csck}) would also imply that divisorial stability is equivalent to uniform K-stability \cite{bj_non_arch_ii}. There is already some evidence that divisorial stability is a more useful notion than uniform K-stability, through Boucksom--Jonsson's proof that divisorial stability is an \emph{open} condition in the ample cone \cite[Theorem A]{bj_non_arch_ii} (see Liu for prior work in the setting of valuative stability \cite{yaxiong-liu}).
	
	Thus, it is hoped that divisorial stability will produce a richer theory of stability of polarised varieties, by analogy with the Fano situation. The goal of this paper is to showcase another situation in which divisorial stability appears more useful than the traditional approach. We denote by $(X,L_X)$ and $(Y,L_Y)$ normal polarised varieties, such that $\pi: (Y,L_Y) \to (X,L_X)$ is a Galois cover with Galois group $G$, by which we mean that $G$ acts on $(Y,L_Y)$ in such a way that its quotient by $G$ is $(X,L_X)$. In addition let $\Delta_X$ and $\Delta_Y$ be effective $\Q$-divisors such that $$K_Y+\Delta_Y = \pi^*(K_X+\Delta_X),$$ where we assume both sides are $\Q$-Cartier divisors.

	\begin{theorem}\label{intromainthm}
$((Y,\Delta_Y),L_Y)$ is $G$-equivariantly divisorially stable if and only if $((X,\Delta_X);L_X)$ is log divisorially stable. 
	\end{theorem}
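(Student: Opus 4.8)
The plan is to build a dictionary between $G$-invariant divisorial measures on $Y$ and divisorial measures on $X$, and then to show that the non-archimedean functionals entering the definition of (log) divisorial stability --- the non-archimedean log-Mabuchi (or Donaldson--Futaki type) functional and the non-archimedean $\mathbf{J}$-functional --- are intertwined by this dictionary, up to the single positive factor $\deg\pi=|G|$. Throughout I take $L_Y=\pi^*L_X$, which is legitimate since $L_Y$ carries the $G$-linearisation making it descend to $L_X$; then for $n$-cycles ($n=\dim X=\dim Y$) pulled back from $X$ the projection formula gives $(\pi^*\alpha_1\cdots\pi^*\alpha_n)=|G|\,(\alpha_1\cdots\alpha_n)$.

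First I would record the valuative side of the dictionary. The analytification $\pi\an\colon Y\an\to X\an$ exhibits $X\an$ as the quotient of $Y\an$ by $G$, and on divisorial points the fibre over a divisorial valuation $v$ on $X$ is a single $G$-orbit $\{g\cdot w: g\in G\}$ of divisorial valuations on $Y$, with $\pi_*w=e\cdot v$, where $\pi_*$ is restriction of valuations to $K(X)\subset K(Y)$ and $e=e(w/v)$ is the ramification index. Hence a $G$-invariant divisorial measure on $Y$ is precisely an $\mathbb R_{\geq 0}$-combination of orbit averages $\tfrac{1}{|G\cdot w|}\sum_{g}\delta_{g\cdot w}$, and pushing forward along $\pi$ gives a bijection $\sigma\leftrightarrow\pi^*\sigma$ between divisorial measures on $X$ and $G$-invariant divisorial measures on $Y$. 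The entropy terms are matched by the Riemann--Hurwitz formula for log discrepancies: using the hypothesis $K_Y+\Delta_Y=\pi^*(K_X+\Delta_X)$ together with the homogeneity of $A$ in the valuation (which absorbs the ramification factor, since $\pi_*w=e\,v$), one gets $A_{(Y,\Delta_Y)}(w)=A_{(X,\Delta_X)}(\pi_*w)$ for every divisorial $w$ on $Y$, and therefore $\int A_{(Y,\Delta_Y)}\,d(\pi^*\sigma)=\int A_{(X,\Delta_X)}\,d\sigma$ once normalisations are bookkept consistently.

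Next I would establish functoriality of the Boucksom--Jonsson non-archimedean potentials and energies under the finite map $\pi$: the divisorial potential $\phi_{\pi^*\sigma}$ on $Y$ attached to $\pi^*\sigma$ is the pullback $\pi^*\phi_\sigma$ of the divisorial potential on $X$, pullback by $\pi\an$ is compatible with the non-archimedean Monge--Ampère operator (scaling total mass by $|G|$), and consequently the functionals $E\na_{L_Y}$, $E\na_{K_Y+\Delta_Y}$, $\Lambda\na_{L_Y}$ and the derived $\mathbf{J}$-functional transform from the $X$-side to the $Y$-side by the factor $|G|$ --- using $\pi^*L_X=L_Y$, $\pi^*(K_X+\Delta_X)=K_Y+\Delta_Y$ and the degree formula for top intersection numbers above. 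Combined with the entropy identity of the previous step, this yields $\mathbf{M}\na_{(Y,\Delta_Y),L_Y}(\pi^*\sigma)=|G|\,\mathbf{M}\na_{(X,\Delta_X),L_X}(\sigma)$ and $\mathbf{J}\na_{L_Y}(\pi^*\sigma)=|G|\,\mathbf{J}\na_{L_X}(\sigma)$ for every divisorial measure $\sigma$ on $X$.

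The theorem is then immediate: $G$-equivariant divisorial stability of $((Y,\Delta_Y),L_Y)$ is by definition the assertion that the non-archimedean log-Mabuchi functional dominates a fixed positive multiple of the non-archimedean $\mathbf{J}$-functional over all $G$-invariant divisorial measures on $Y$; since every such measure is $\pi^*\sigma$ for a unique divisorial measure $\sigma$ on $X$ and the two functionals differ from their $X$-counterparts by the common positive factor $|G|$, this holds if and only if $((X,\Delta_X);L_X)$ is log divisorially stable, with the same stability constant. I expect the main obstacle to be the functoriality step: one must make precise and verify that pullback by the finite morphism $\pi\an$ carries divisorial potentials to divisorial potentials and is compatible with the non-archimedean Monge--Ampère operator and the energy pairings, so that $E\na$, $E\na_{K+\Delta}$ and the functionals built from them scale by $\deg\pi$. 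A secondary point needing care is the bookkeeping of normalisations of divisorial valuations (primitive generators versus their pushforwards $e\,v$), so that the masses and entropy integrals really do match across the dictionary.
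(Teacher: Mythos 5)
Your proposal follows essentially the same route as the paper: a pushforward/pullback dictionary between $G$-invariant divisorial measures on $Y$ and divisorial measures on $X$ built on the orbit description of fibres of $\pi\an$ over divisorial points, the identity $A_{(Y,\Delta_Y)}(w)=A_{(X,\Delta_X)}(\pi_*w)$ from Riemann--Hurwitz and homogeneity of the log discrepancy, and functoriality of the non-Archimedean potentials and energies under $\pi$ (the paper's Propositions on pullback/pushforward of metrics and measures, the $G$-invariance of the Monge--Amp\`ere potential of a $G$-invariant measure, and the resulting equality of norms). You also correctly isolate the main technical burden, namely that the potential solving $\MA(\phi)=\pi^*\sigma$ is the pullback of the potential for $\sigma$; in the paper this is handled via equivariant approximation by $G$-invariant flag ideals and the equivariant Calabi--Yau argument.

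One bookkeeping inconsistency should be flagged: you assert that the entropy integrals match exactly, $\int A_{(Y,\Delta_Y)}\,d(\pi^*\sigma)=\int A_{(X,\Delta_X)}\,d\sigma$, while simultaneously claiming that the Mabuchi-type functional scales by $|G|$. Since that functional is the entropy plus energy-derivative terms, these two claims are incompatible. In the Boucksom--Jonsson normalisation used to define divisorial stability the energy is $E(\X,\L)=\L^{n+1}/\bigl((n+1)L^n\bigr)$ and the norm of a measure is built from it, so the factor $\deg\pi=|G|$ appearing in the top intersection numbers cancels against the same factor in $L_Y^n=|G|\,L_X^n$; the correct statements are $\|\pi^*\sigma\|_{L_Y}=\|\sigma\|_{L_X}$, $\nabla_{\pi^*H}\|\pi^*\sigma\|_{L_Y}=\nabla_H\|\sigma\|_{L_X}$ and $\beta_Y(\pi^*\sigma)=\beta_X(\sigma)$, all with factor $1$. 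Your final conclusion survives since both sides of the stability inequality would be rescaled by the same constant, but as written the intermediate identities are mutually inconsistent and should be corrected.
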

	
	The most applicable special case is when $G$ is cyclic of degree $m$,  $\Delta_Y$ is taken to be trivial and $\Delta_X$ is the integral divisor such that Riemann--Hurwitz produces $K_Y=\pi^*(K_X+(1-1/m)\Delta_X)$. Theorem \ref{intromainthm} then gives the following corollary:
	
	\begin{corollary} 
	$(Y,L_Y)$ is $G$-equivariantly divisorially stable if and only if $((X,(1-1/m)\Delta_X);L_X)$ is log divisorially stable. \end{corollary}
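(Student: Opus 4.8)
The plan is to obtain this as a direct specialisation of Theorem~\ref{intromainthm}, so that all the substantive work has already been done there. Concretely, I would apply that theorem to the given cyclic cover $\pi\colon(Y,L_Y)\to(X,L_X)$ with the boundary data $\Delta_Y=0$ on $Y$ and $\Delta_X':=(1-\tfrac1m)\Delta_X$ on $X$, where $\Delta_X$ is the integral branch divisor of $\pi$. The only thing to verify is that this data meets the hypotheses of Theorem~\ref{intromainthm}: that $\Delta_Y$ and $\Delta_X'$ are effective $\Q$-divisors, that $K_Y+\Delta_Y$ and $K_X+\Delta_X'$ are $\Q$-Cartier, and that $K_Y+\Delta_Y=\pi^{*}(K_X+\Delta_X')$.

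Effectivity is immediate: $\Delta_Y=0$ and, since $\Delta_X$ is effective and $0<1-\tfrac1m<1$, also $\Delta_X'=(1-\tfrac1m)\Delta_X$ is an effective $\Q$-divisor. The pullback relation is exactly Riemann--Hurwitz for a degree-$m$ cyclic cover: writing $R$ for the reduced ramification divisor of $\pi$, one has $\pi^{*}\Delta_X=mR$ and ramification contribution $(m-1)R$, so
\[
  K_Y=\pi^{*}K_X+(m-1)R=\pi^{*}K_X+\tfrac{m-1}{m}\,\pi^{*}\Delta_X=\pi^{*}\!\left(K_X+\big(1-\tfrac1m\big)\Delta_X\right)=\pi^{*}(K_X+\Delta_X'),
\]
which is $K_Y+\Delta_Y=\pi^{*}(K_X+\Delta_X')$ with $\Delta_Y=0$; this display is also what pins down the integral divisor $\Delta_X$ in the statement. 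Finally, under the standing assumption (as in the setup preceding Theorem~\ref{intromainthm}) that $K_X+\Delta_X'$ is $\Q$-Cartier, its pullback $K_Y=K_Y+\Delta_Y$ is $\Q$-Cartier as well.

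With all hypotheses checked, Theorem~\ref{intromainthm} applies and asserts precisely that $((Y,0),L_Y)$ is $G$-equivariantly divisorially stable if and only if $((X,\Delta_X');L_X)$ is log divisorially stable; since $G$-equivariant divisorial stability of $((Y,0),L_Y)$ is by definition that of $(Y,L_Y)$, this is the claim of the corollary. I do not expect any genuine obstacle here — the corollary is a bookkeeping consequence of the theorem — the only points deserving a word of care being the Riemann--Hurwitz coefficient $1-\tfrac1m$ and the mild $\Q$-Cartier hypothesis, both of which are standard for cyclic covers in characteristic zero.
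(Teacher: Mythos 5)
Your proposal is correct and matches the paper's route exactly: the paper obtains this corollary as the immediate specialisation of Theorem~\ref{intromainthm} to $\Delta_Y=0$ and $\Delta_X'=(1-\tfrac1m)\Delta_X$, with the pullback hypothesis supplied by Riemann--Hurwitz for the cyclic cover, precisely as you verify. Nothing further is needed.
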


	Analogous results holds for divisorial semistability. The proof compares the divisorial measures used to define divisorial stability of $(Y,L_Y)$ to the corresponding objects on $(X,L_X)$, and uses non-Archimedean geometry to compare various associated numerical invariants. The advantage of divisorial stability over K-stability is analogous to the advantage exploited by Boucksom--Jonsson in their work on openness of divisorial stability in the ample cone: the numerical invariants involved in the definition of divisorial stability involve an entropy  (or log discrepancy) term that is \emph{easier} to manage than the analogous quantity involved in K-stability, while the \emph{energy} (or \emph{norm}) terms become more complicated; much of the proof involves understanding the behaviour of these energy terms under finite covers. We emphasise again this key advantage of divisorial stability: while handling the energy terms becomes more involved than with the traditional approach, one should expect these terms to generally be more manageable (as is the case both in the present work and in Boucksom--Jonsson \cite{bj_non_arch_ii}). By contrast, the entropy term---behind much of the difficulty of K-stability---becomes considerably simpler to understand.
	
	The corresponding result in the \emph{Fano} case was proven in steps by several authors; the first author proved one direction \cite{dervan_2016} for cyclic groups $G$, with the other direction and various improvements being proven by Fujita \cite[Corollary 1.7]{fujita-plt} and Liu--Zhu \cite{liu_zhu_2021}. This result has found many applications to the construction of new examples of K-stable Fano varieties (beyond these three original papers, we give \cite{liu-double} as a typical application), and in the study of K-moduli of log Fano pairs and moduli spaces of K3 surfaces (through \cite{ascher_devleming_liu_2021}). While the technique of \cite{dervan_2016} uses the language of K-stability, the techniques of \cite{fujita-plt,liu_zhu_2021} instead use divisorial valuations. It seems challenging, however, to adapt the techniques of \cite{dervan_2016} to prove an analogous result for general polarised varieties, as the proof given there relies on properties of K-stability of Fano varieties that do not hold more generally \cite{lixu}. We thus emphasise that divisorial stability seems more suited to this problem, as Theorem \ref{intromainthm} \emph{exactly} generalises the Fano results to general polarised varieties. We also mention that the techniques we employ to prove Theorem \ref{intromainthm} are quite distinct from those of \cite{fujita-plt,liu_zhu_2021}, since divisorial stability involves convex combinations of divisorial valuations, and the actual numerical invariants have a somewhat different flavour in the Fano situation.
	
	We use an interpolation technique to produce examples.
	
\begin{theorem}\label{introinterpolation}	
Let $(X,L_X)$ be a divisorially semistable normal polarised variety. There is a $k>0$ such that if we let
\begin{enumerate}[(i)]
\item  $\Delta_X \in |kL_X|$ be such that $(X,\Delta_X)$ is log canonical,
\item and let $\pi: Y \to X$ be the $m$-fold cover of $X$ branched over $\Delta_X$,
\end{enumerate}then $(Y,L_Y)$ is $G$-equivariantly divisorially stable, where $G$ is the associated cyclic group of degree $m$ and $L_Y = \pi^*L_X$.
\end{theorem}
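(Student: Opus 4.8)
The plan is to reduce, via Theorem~\ref{intromainthm}, to a statement about the pair $(X,(1-1/m)\Delta_X)$ on $X$, and then to interpolate between the given divisorial semistability of $(X,L_X)$ and a genuinely \emph{stable} endpoint produced by the positivity of the log canonical class after twisting by a large multiple of $L_X$. By the Corollary to Theorem~\ref{intromainthm}, $(Y,L_Y)$ is $G$-equivariantly divisorially stable if and only if $((X,(1-1/m)\Delta_X);L_X)$ is log divisorially stable, so it suffices to prove the latter. Write $c=1-1/m\in[\tfrac12,1)$. The non-Archimedean log Mabuchi functional is affine in the boundary coefficient: since $A_{(X,c\Delta_X)}=(1-c)A_X+c\,A_{(X,\Delta_X)}$ and $K_X+c\Delta_X=(1-c)K_X+c(K_X+\Delta_X)$, with $\bar S$ likewise affine in the boundary, one gets, on divisorial measures,
$$\mathbf{M}^{\mathrm{NA}}_{(X,c\Delta_X)}(\mu)=(1-c)\,\mathbf{M}^{\mathrm{NA}}_{(X,0)}(\mu)+c\,\mathbf{M}^{\mathrm{NA}}_{(X,\Delta_X)}(\mu).$$
Divisorial semistability of $(X,L_X)$ gives $\mathbf{M}^{\mathrm{NA}}_{(X,0)}(\mu)\ge 0$, so it is enough to find $k$ (depending only on $(X,L_X)$) and $\varepsilon>0$ such that $\mathbf{M}^{\mathrm{NA}}_{(X,\Delta_X)}(\mu)\ge\varepsilon\,\mathbf{J}^{\mathrm{NA}}(\mu)$ for every divisorial measure $\mu$ and every $\Delta_X\in|kL_X|$ with $(X,\Delta_X)$ log canonical (here $\mathbf{J}^{\mathrm{NA}}$ is the relevant norm functional; any of the standard equivalent choices will do). Indeed it then follows that $\mathbf{M}^{\mathrm{NA}}_{(X,c\Delta_X)}(\mu)\ge c\varepsilon\,\mathbf{J}^{\mathrm{NA}}(\mu)\ge\tfrac12\varepsilon\,\mathbf{J}^{\mathrm{NA}}(\mu)$, which is the required log divisorial stability; and since $c\ge\tfrac12$, the same $k$ works for every $m$.

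For the bound on $\mathbf{M}^{\mathrm{NA}}_{(X,\Delta_X)}$ I would use the non-Archimedean Chen--Tian type decomposition
$$\mathbf{M}^{\mathrm{NA}}_{(X,\Delta_X)}(\mu)=\mathbf{H}^{\mathrm{NA}}_{(X,\Delta_X)}(\mu)+\mathbf{J}^{\mathrm{NA}}_{K_X+\Delta_X}(\mu),$$
where $\mathbf{H}^{\mathrm{NA}}_{(X,\Delta_X)}(\mu)=\int A_{(X,\Delta_X)}\,d\mu$ is the relative entropy of the pair and $\mathbf{J}^{\mathrm{NA}}_{K_X+\Delta_X}$ is the non-Archimedean twisted energy functional attached (with respect to the polarisation $L_X$) to the class $K_X+\Delta_X$, which depends on the choice of $\Delta_X$ only through its numerical class $K_X+\Delta_X\equiv K_X+kL_X$. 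Log canonicity of $(X,\Delta_X)$ gives $A_{(X,\Delta_X)}\ge 0$, hence $\mathbf{H}^{\mathrm{NA}}_{(X,\Delta_X)}(\mu)\ge 0$; this is the only place the choice of $\Delta_X$ enters, and only through nonnegativity, so the resulting bound is uniform in $\Delta_X$. For $k\gg 0$ the class $K_X+kL_X$ is ample and, more to the point, positive enough relative to $L_X$ that the twisted energy $\mathbf{J}^{\mathrm{NA}}_{K_X+kL_X}$ is coercive: this is the non-Archimedean counterpart, on divisorial measures, of the classical fact that $\mathbf{J}^{\chi}\ge\varepsilon\,\mathbf{J}$ when $\chi$ is a K\"ahler (here: ample) class. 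One then obtains $\mathbf{M}^{\mathrm{NA}}_{(X,\Delta_X)}(\mu)\ge\mathbf{J}^{\mathrm{NA}}_{K_X+kL_X}(\mu)\ge\varepsilon\,\mathbf{J}^{\mathrm{NA}}(\mu)$ with $\varepsilon=\varepsilon(X,L_X,k)>0$, which closes the argument of the first paragraph. (One should also note that such $k$ may be chosen large enough that $|kL_X|$ contains members $\Delta_X$ with $(X,\Delta_X)$ log canonical, so that hypotheses (i)--(ii) are non-vacuous; for $k\gg 0$ this is a Bertini-type statement using that $X$ is normal with mild singularities.)

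I expect the main obstacle to be the coercivity of the twisted energy term on divisorial measures: setting up the precise non-Archimedean Chen--Tian decomposition of $\mathbf{M}^{\mathrm{NA}}_{(X,\Delta_X)}$ in the Boucksom--Jonsson framework, identifying its energy part with a twisted energy functional $\mathbf{J}^{\mathrm{NA}}_{\chi}$ in a normalisation for which the classical inequality ``$\mathbf{J}^{\chi}\ge\varepsilon\,\mathbf{J}$ for K\"ahler $\chi$'' transfers, and extracting an explicit positivity threshold on $k$ (which will involve $K_X\cdot L_X^{n-1}/L_X^{n}$ and the least $k$ for which $K_X+kL_X$ is ample). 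By contrast, the affineness in the boundary coefficient, the nonnegativity of the pair entropy under the log canonical hypothesis, and the input from Theorem~\ref{intromainthm} are formal or already available. It is worth emphasising that divisorial semistability of $(X,L_X)$ is used purely through $\mathbf{M}^{\mathrm{NA}}_{(X,0)}\ge 0$: it is the ampleness of $K_X+\Delta_X$ that upgrades semistability to strict stability after passing to the cover, so the argument is insensitive to whether $(X,\Delta_X)$ is strictly log canonical rather than klt.
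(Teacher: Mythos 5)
Your proposal is correct and follows essentially the same route as the paper: reduction via Theorem \ref{intromainthm} to log divisorial stability of $((X,(1-1/m)\Delta_X);L_X)$, interpolation using affineness of the beta invariant in the boundary coefficient together with divisorial semistability of $(X,L_X)$, and log divisorial stability of $((X,\Delta_X);L_X)$ from nonnegativity of the entropy plus coercivity of the energy term $\nabla_{K_X+\Delta_X}\lVert\mu\rVert_{L_X}$ for $k\gg 0$. The coercivity step you flag as the main obstacle is exactly the input the paper imports: after passing from measures to metrics via the non-Archimedean Calabi--Yau theorem and the identity $\nabla_{K_X+\Delta_X}\lVert\MA(\phi)\rVert_{L_X}=\nabla_{K_X+\Delta_X}E_{L_X}(\phi)$, it is supplied by \cite[Proof of Theorem 3.7]{stablemaps}, yielding the explicit threshold of Remark \ref{whichk} (which is slightly stronger than mere ampleness of $K_X+kL_X$).
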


The construction applies for any $m>0$. The $k$ needed depends explicitly on the geometry of $(X,L_X)$, see Remark \ref{whichk}. The proof shows that $((X,\Delta_X);L_X)$ is automatically divisorially stable, meaning by interpolation so is $((X,(1-1/m)\Delta_X);L_X)$, hence by Theorem \ref{intromainthm} $(Y,L_Y)$ is $G$-equivariantly divisorially stable. Although the hypotheses themselves are different, this result is analogous to \cite[Corollary 1.2]{dervan_2016}, where an interpolation strategy was used to give a sufficient condition for K-stability of finite covers of Fano varieties. This was the source of many of the examples of K-stable Fano varieties produced by the K-stability analogue of Theorem \ref{intromainthm} in the Fano setting.

When the output $(Y,L_Y)$ is smooth, and the field we work over is $\C$, this is sufficient to produce constant scalar curvature K\"ahler metrics.

\begin{corollary}\label{introcor}
Under the same hypothesis as Theorem \ref{introinterpolation}, provided $(Y,L_Y)$ is smooth, $c_1(L_Y)$ admits a constant scalar curvature K\"ahler metric.
\end{corollary}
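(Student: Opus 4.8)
The plan is to combine Theorem \ref{introinterpolation} with the Yau--Tian--Donaldson-type correspondence for constant scalar curvature Kähler metrics. By Theorem \ref{introinterpolation}, under the stated hypotheses the polarised variety $(Y,L_Y)$ is $G$-equivariantly divisorially stable; in particular it is divisorially stable (forgetting the group action only makes the condition harder, so equivariant divisorial stability trivially implies divisorial stability). By Boucksom--Jonsson \cite{bj_non_arch_ii}, divisorial stability implies uniform K-stability. Thus $(Y,L_Y)$ is uniformly K-stable.

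The remaining step is to invoke the solution of the uniform version of the Yau--Tian--Donaldson conjecture for smooth polarised varieties. Since we are assuming $(Y,L_Y)$ is smooth and we work over $\C$, and since uniform K-stability is known to imply the existence of a constant scalar curvature Kähler metric in $c_1(L_Y)$ by the combination of \cite{BBJ} and \cite{chi-li-csck} (reducing the analytic problem to uniform K-stability via the variational/non-Archimedean approach), we conclude that $c_1(L_Y)$ admits a constant scalar curvature Kähler metric. I would simply cite these results and chain the implications: equivariant divisorial stability $\Rightarrow$ divisorial stability $\Rightarrow$ uniform K-stability $\Rightarrow$ existence of a cscK metric.

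The only genuine subtlety worth remarking on is whether any of these implications requires the smoothness hypothesis or has side conditions (such as discreteness of automorphisms, or properness of a relevant functional). The implication from divisorial stability to uniform K-stability in \cite{bj_non_arch_ii} holds for normal polarised varieties without smoothness assumptions, so no issue arises there. The implication from uniform K-stability to the existence of a cscK metric via \cite{BBJ, chi-li-csck} is precisely where smoothness of $(Y,L_Y)$ is used, which is exactly the hypothesis we have imposed. I expect no real obstacle here: the corollary is essentially a formal consequence of Theorem \ref{introinterpolation} together with results quoted in the introduction, and the proof is a short paragraph assembling these citations.
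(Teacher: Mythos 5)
There is a genuine gap at the very first step, and it is a directional error. You claim that $G$-equivariant divisorial stability ``trivially implies'' divisorial stability because ``forgetting the group action only makes the condition harder.'' This is backwards: $G$-equivariant divisorial stability (Definition \ref{G-equivariant divisorial stability}) only requires the inequality $\beta(\mu)\geq\epsilon\|\mu\|$ to hold for \emph{$G$-invariant} divisorial measures, a strictly smaller class of test objects, so it is the \emph{weaker} condition. Divisorial stability implies $G$-equivariant divisorial stability, not conversely; the paper explicitly states that the converse is only \emph{expected}, by analogy with the Fano case, and is not proven here. Since Theorem \ref{introinterpolation} only delivers $G$-equivariant divisorial stability of $(Y,L_Y)$, your chain of implications never gets off the ground. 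A secondary issue: even granting divisorial stability, the implication ``uniform K-stability $\Rightarrow$ cscK'' for test-configuration uniform K-stability is not known; what Li proves is that uniform K-stability \emph{on $\E^1$} implies the existence of a cscK metric, and it is divisorial stability that Boucksom--Jonsson show to be equivalent to that stronger notion.

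The paper's actual route is designed precisely to avoid needing the non-equivariant statement. It first proves an equivariant analogue of Boucksom--Jonsson's correspondence (Theorem \ref{equiv-BJ}): $G$-equivariant divisorial stability of $(Y,L_Y)$ is equivalent to uniform K-stability on $\E^{1,G}$, the space of $G$-invariant finite energy metrics. This uses the equivariant non-Archimedean machinery developed earlier (Proposition \ref{G-invarinant measures and energy comparison}, giving a bijection between $G$-invariant finite norm measures and $G$-invariant finite energy metrics via the Monge--Amp\`ere operator, together with the identity $M(\phi)=\beta(\MA(\phi))$). It then invokes Li's \emph{equivariant} existence theorem \cite[Theorem 1.10]{chi-li-csck}: for smooth $Y$, uniform K-stability on $\E^{1,G}$ already suffices to produce a cscK metric in $c_1(L_Y)$. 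You should replace your first two implications with this equivariant chain.
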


This corollary relies on an equivariant version of the result of Boucksom--Jonsson relating divisorial stability to uniform K-stability on $\E^1$, and work of Li producing constant scalar curvature K\"ahler metrics from $G$-equivariant uniform K-stability on $\E^1$. 

There are many analytic counterparts to the results mentioned above, all under smoothness assumptions. The usage of finite symmetry groups in the study of K\"ahler--Einstein metrics goes back at least to Siu \cite{siu}, Nadel \cite{nadel} and Tian \cite{tian}, and general results more in the spirit of our work were proven by Arezzo--Ghigi--Pirola \cite{AGP} and Li--Sun \cite{li-sun}. In the general constant scalar curvature setting,  Aoi--Hashimoto--Zheng have proven one part of the analogue of Theorem \ref{introinterpolation} \cite[Theorem 1.10]{AHZ}, namely the existence of constant scalar curvature K\"ahler metrics with cone angle singularities along $\Delta_X$ for sufficiently large $k$, while Arezzo--Della Vedova--Shi have proven an analytic analogue of Theorem \ref{introinterpolation}, producing constant scalar curvature K\"ahler metrics on suitable finite covers \cite{ADS}. The existence of constant scalar curvature K\"ahler metrics with cone angle singularities for $k \gg 0$ is an analogue of results of Hashimoto and Zeng on twisted constant scalar curvature K\"ahler metrics, and we rely on an algebro-geometric counterpart of these results proven by the first author and Ross \cite[Theorem 3.7]{stablemaps}. Arezzo--Della Vedova--Shi use these results to produce new examples of constant scalar curvature K\"ahler metrics, and we refer to their work for a discussion of examples to which these sorts of results can be applied \cite[Section 6]{ADS} (though we emphasise that applications of Theorem \ref{introinterpolation} are currently limited as we currently know relatively few examples of divisorially semistable varieties). 

In another direction, we note that Li has given examples of smooth polarised varieties which are uniformly K-stable on $\E^1$ using analytic techniques \cite[Proposition 6.12]{chi-li-csck} (hence divisorially stable by Boucksom--Jonsson); these results also apply to pairs, provided $X$ is smooth and $(X,(1-1/m)\Delta_X)$ is log canonical (as Li's method is insensitive to singularites of the divisor provided they are log canonical). Taking $\Delta_X$ to be singular, the $m$-fold branched cover $Y$ is singular and Theorem \ref{intromainthm} implies $(Y,L_Y)$ is $G$-equivariantly divisorially stable. This is a consequence of Li's work directly when $Y$ is smooth, but is inaccessible using analytic techniques when $Y$ is singular, meaning our result gives new examples of $G$-equivariantly stable varieties.

	\subsection*{Acknowledgements} We thank S\'ebastien Boucksom, Ivan Cheltsov, Joaqu\'in Moraga and R\'emi Reboulet for helpful comments. RD was funded by a Royal Society University Research Fellowship; TSP was funded by a postdoctoral fellowship associated with the aforementioned Royal Society University Research Fellowship.
	
	%	\subsection*{Notation} We work over an algebraically closed field $k$ of characteristic zero. Let $X$, $Y$ be irreducible projective varieties over $k$, i.e. integral projective $k$-schemes. Set $n \coloneqq \dim(X)$.

	\section{Divisorial stability of polarised varieties}\label{sec2}
	We work over an algebraically closed field $k$ of characteristic zero. We fix an $n$-dimensional normal projective variety $X$ along with an effective $\Q$-divisor $B$ such that $K_X+B$ is $\Q$-Cartier; we allow $B$ to be trivial. We also fix an ample $\Q$-line bundle $L$ on $X$.
	
	\subsection{Valuative stability}
	
	While we are ultimately interested in \emph{divisorial stability}, which is a notion of stability defined through \emph{convex combinations} of divisorial valuations on $X$, the general theory is quite intricate and simplifies considerably for a \emph{single} divisorial valuation. Thus we begin by explaining the theory for a single valuation, as introduced by the first author and Legendre \cite{dervan-legendre}, generalising the work of Fujita and Li in the Fano setting \cite{fujita, li}. A  reference explaining the background to the material presented here is \cite{lazarsfeld}.
	
	\begin{definition} A \emph{prime divisor over $X$} is an irreducible prime divisor $F \subset Y$ for some projective variety $Y$ which admits a birational morphism $\pi: Y \to X$. 
	
	\end{definition}
	
	A prime divisor $F$ over $X$ equivalently induces a valuation $\ord F$ on the function field $k(X)$ of $X$.
	
	\begin{definition} A \emph{divisorial valuation} is a valuation on $k(X)$ of the form $c \ord F$ for $F$ a prime divisor over $X$ and $c\in \R\geq 0$; we sometimes write this valuation as $v_{c\ord F}$.
	 \end{definition}
	
	By passing to a log resolution of singularities if necessary, we assume that the pair $(Y,F)$ is log smooth. To such a divisorial valuation we associate a numerical invariant called the \emph{beta invariant} of $F$, defined through the following standard invariants in birational geometry.
	
	\begin{definition} Suppose $L$ is a line bundle on $X$. We define the \emph{volume} of $L$ to be $$\Vol(L) = \limsup_{r\to\infty} \frac{\dim H^0(X,rL)}{r^n/n!},$$ and we say that $L$ is \emph{big} if $\Vol(L)>0$.\end{definition}
	
	The volume satisfies the homogeneity property $\Vol(lL) = l^n\Vol(L)$ and hence the definition extends to $\Q$-line bundles; it further extends to $\R$-line bundles by a continuity argument. We use two foundational results concerning the volume: firstly, the $\limsup$ involved in the definition is actually a limit; secondly, the volume is actually a continuously differentiable function on the cone of big ($\R$-)line bundles on $X$ \cite{BFJ}. We extend this definition to take $F$ into account by defining $$\Vol(L-xF) = \Vol(\pi^*L-xF),$$ where the latter is calculated on $Y$.
	
	We also require a measure of singularities, for which we use our hypothesis that $K_X+B$ is $\Q$-Cartier.
	
	\begin{definition} We define the \emph{log discrepancy} of $F$ to be $$A_{(X,B)}(F) = \ord_F(K_Y - \pi^*(K_X+B)) + 1.$$ \end{definition}
	
	Here we use that $(Y,F)$ is log smooth; if not, one should work on a log resolution of singularities of $(Y,F)$. The beta invariant is simply a combination of these invariants. Denote $$S_L(F) =  \frac{ \int_0^{\infty} \Vol(L-xF) dx}{L^n}.$$
	
	\begin{definition}\label{def:beta1}
		The \emph{beta invariant} of $F$ is defined to be $$\beta(F) = A_{(X,B)}(F) + \nabla_{K_X+B} S_L(F),$$where $$\nabla_{K_X+B}  S_L(F) = \frac{d}{dt}\bigg|_{t=0} S_{L+t(K_X+B)}(F).$$ 
	\end{definition}

	\begin{example} If $L=-K_X$ and $B=0$, so that $X$ is a Fano variety, this invariant reduces to $$\beta(F) = A_X(F)- S_{-K_X}(F),$$ which is precisely the invariant introduced by Fujita and Li \cite{fujita,li}. In general, our presentation of $\beta(F)$ agrees with that of Boucksom--Jonsson; see \cite[Theorem 2.18]{bj_non_arch_ii} for the equivalence with the original presentation  \cite{dervan-legendre}.
	
	\end{example}
	
%	Denote further $S(F) =  \int_0^{\infty} \Vol(L-xF) dx.$
	
	\begin{definition}We say that $((X,B);L)$ is
		\begin{enumerate}[(i)]
			\item \emph{valuatively semistable} if for all prime divisors $F$ over $X$ we have $\beta(F) \geq 0$;
			\item \emph{uniformly valuatively stable} if there exists an $\epsilon>0$ such that for all prime divisors $F$ over $X$ we have $\beta(F) \geq \epsilon S_L(F)$.
		\end{enumerate}
	\end{definition}
	
	Strictly speaking, in \cite{dervan-legendre} valuative stability required the divisors to be \emph{dreamy}, which is a finite-generation hypothesis. As this plays no role in the present work---and in light of the work of Boucksom--Jonsson on divisorial stability \cite{bj_non_arch_ii}, appears generally less relevant---we choose to omit this hypothesis (as in \cite{yaxiong-liu}).
	
	\begin{remark}\label{extension-of-discrepancy}
	These numerical invariants extend in a homogeneous way to divisorial valuations, namely by defining $A_{(X,B)}(a\ord F) = aA_{(X,B)}( F)$, $S_L(a\ord F) = aS_L(F)$ and $\beta(a\ord F) = a\beta( F)$. In this way, uniform valuative stability with respect to prime divisors over $X$ and with respect to divisorial valuations are equivalent. \end{remark}

	\subsection{Test configurations and K-stability}
	
	We next define test configurations and the associated Monge--Amp\`ere energy (which will be used in the subsequent sections) and uniform K-stability (which will play a secondary role to divisorial stability). 
	\begin{definition} A \emph{test configuration} for $((X,B);L)$ is a variety $\X$ along with:
		\begin{enumerate}[(i)]
			\item a $\Q$-Weil divisor $\B \subset \X$ and a $\Q$-line bundle $\L$;
			\item a $\C^*$-action on $\X$ fixing $\B$ and lifting to $\L$;
			\item a flat, $\mathbb G_m$-equivariant morphism $\pi: \X \to \mathbb A^1$ making $\B \to \mathbb A^1$  a flat morphism,
		\end{enumerate}
		such that each fibre $((\X_t,\B_t);\L_t)$ for $t\neq 0$ is isomorphic to $((X,B);L)$. We say that $((\X,\B);\L) $ is \emph{normal} if $\X$ is normal, \emph{ample} if $\L$ is relatively ample, \emph{semiample} if $\L$ is relatively semiample and \emph{nef} if $\L$ is relatively nef.
	\end{definition}
	
	The divisor $\B \subset \X$ is canonically defined by taking the $\mathbb G_m$-closure of $B \subset X \cong \X_1$. A test configuration admits a canonical compactification to a family over $\pr^1$ by compactifying trivially at infinity, and we will use the same notation for the resulting family $((\X,\B);\L) \to \pr^1$.
	
	\begin{definition} The \emph{Monge--Amp\`ere energy} of a nef test configuration $(\X,\L)$ is defined to be $$E(\X,\L) =  \frac{\L^{n+1}}{(n+1)L^n},$$ where this intersection number is calculated on the compactified test configuration over $\pr^1$. When we wish to emphasise the dependence on $L$, we denote this by $E_L(\X,\L)$.\end{definition}
	
	Note that this quantity is independent of $\B$ and hence we omit $\B$ from the notation. To define further numerical invariants, is is useful to pass to a resolution of indeterminacy of the natural rational map $\X \dashrightarrow X\times \pr^1;$ we thus obtain a new test configuration with an equivariant morphism to $X\times\pr^1$. We replace $\X$ by the associated resolution of indeterminacy, which we then say \emph{dominates the trivial test configuration} $(X\times \pr^1, L)$.
	
	\begin{definition} The \emph{minimum norm} of a nef test configuration $(\X,\L)$ is defined to be $$\|(\X,\L)\|_{\min} = \frac{\L^{n+1}}{(n+1)L^n} - \frac{\L^n.(\L-L)}{L^n},$$ where $L$ is pulled back to $\X$ through the morphism $\X \to X$. \end{definition}
	
	The minimum norm is called the ``non-Archimedean $I-J$-functional'' in \cite{boucksom_hisamoto_jonsson}; we follow the terminology of \cite{uniform}. This quantity is again independent of $\B$. The \emph{Mabuchi functional}, however, does actually depend on $\B$. In order to define this, for a test configuration dominating the trivial one define the \emph{entropy} $$\Ent(\X,\L) =  \frac{1}{L^n}\left(\L^n.K_{(\X,\B)/(X,B)\times\pr^1} +  \L^n.(\X_0 - \X_{0,\red})\right),$$ where $$K_{(\X,\B)/(X,B)\times\pr^1} = K_{\X}+\B - \pi^*(K_{X\times\pr^1} + B)$$ is the relative canonical class and $\X_0, \X_{0,\red}$ denote the central fibre of $\X$ and the reduced the central fibre respectively. 	To make sense of this definition, one uses that $\X$ is normal to ensure that $K_{\X}$ is a Weil divisor.

	\begin{definition}\label{def:mabuchi} We define the \emph{Mabuchi functional}  on the set of normal, nef test configurations to take the value $$M((\X,\B);\L) =\Ent(\X,\L) + \nabla_{K_X+B}E_L(\X,\L),$$ where  $((\X,\B);\L)$ is such a test configuration.\end{definition}

This is often called the \emph{non-Archimedean} Mabuchi functional; as its Archimedean counterpart plays no role in the present work, we simplify the terminology. It agrees with the more traditional  \emph{Donaldson--Futaki invariant}  of a normal, nef test configuration provided $\X_0$ is reduced; the associated notions of stability---which we next define---can be seen to be equivalent by a base-change argument \cite[Proposition 7.15]{boucksom_hisamoto_jonsson}. The directional derivative involved is defined by $$\nabla_{K_X+B}E_L (\X,\L) = \frac{d}{dt}\bigg|_{t=0} \frac{(\L+t(K_X+B))^{n+1}}{(n+1)(L+t(K_X+B))^n},$$ where we  assume (as we may) that the test configuration dominates the trivial one and $K_X+B$  is also then used to denote its pullback to $\X$; this derivative can be computed explicitly to produce a version of the Mabuchi functional more commonly used in the literature.

	\begin{definition} We say that $((X,B);L)$ is 
		\begin{enumerate}[(i)]
			\item \emph{K-semistable} if for all normal, nef test configurations $((\X,\B);\L)$ for $((X,B);L)$ we have $M(\X,\L) \geq 0$;
			\item \emph{uniformly K-stable} if there exists an $\epsilon>0$ such that for all normal, nef test configurations $((\X,\B);\L)$ for $((X,B);L)$ we have $M((\X,\B);\L) \geq \epsilon \|(\X,\L)\|_{\min}$.
		\end{enumerate}
	\end{definition}
	
	The relationship between uniform K-stability and K-stability is as follows: uniform K-stability with respect to test configurations \emph{with irreducible central fibre} is equivalent to uniform valuative stability \emph{with respect to dreamy prime divisors} \cite[Theorem 1.1]{dervan-legendre}. Roughly speaking, the central fibre of a test configuration with \emph{irreducible central fibre} induces a prime divisor over $X$, and conversely under a finite generation hypothesis the reverse of this construction also succeeds; the beta invariant is defined in such a way that it equals the value of the Mabuchi functional at the associated test configuration. To obtain stronger results---giving a fuller valuative interpretation of K-stability, in particular allowing test configurations with reducible central fibres---one needs further tools from non-Archimedean geometry, which we now turn to.
	
	\subsection{Berkovich analytification}
	%Including psh metrics. Extend energy to psh, and define finite energy. Explain want to compactify test configs. Mention here differentiability of energy.
	
	The appropriate way of viewing convex combinations of divisorial valuations is as a certain type of measure on the \emph{Berkovich analytification} $X\an$ of $X$, which we now define.  As throughout, we assume that $X$ is a normal  projective variety defined over an algebraically closed field of characteristic zero; the boundary divisor $B$ will be irrelevant in the present section. We refer to Reboulet \cite[Section 2]{reboulet} or Boucksom--Jonsson \cite{boucksom_jonsson_2022} for further details and proofs of the results stated below.
	
	For our purposes, it will be sufficient to define $X\an$ as a topological space; we note that it naturally carries the richer structure of a locally ringed space. We endow the field $k$ with the trivial absolute value.
	
	\begin{definition}
		As a set, we define the  \emph{Berkovich analytification} $X\an$ of $X$ to be the set of pairs $(V, |\cdot|_V)$, where $V$ is an irreducible subvariety of $X$ and $|\cdot|_V$ is an absolute value on the function field $k(V)$ extending the (trivial) absolute value on $k$. \end{definition}
	
	As a topological space, we first consider an affine chart $U \subset X$, where we define a topology on $U\an$ by requiring that for all $f\in \mathcal{O}_X(U)$ the evaluation map $$f:  U\an\to \mathbb{R}$$ defined by $ f(V, |\cdot|_V) = |f|_V$ be continuous. These topologies agree on intersections of affine charts of $X$, and hence glue to a topology on $X\an$ which is compact and Hausdorff. The association $X\to X\an$ is functorial, in the sense that a morphism $Y\to X$ of projective varieties induces a morphism of analytic spaces $Y\an\to X\an$.
	
 If we take $V=X$, then we simply obtain the function field of $X$. The space $X^{\val} \subset X\an$ of valuations on $X$ is then an open dense subset of $X\an$, and $X\an$  is thus a compactification of $X^{\val}$. We further denote $X^{\di}$ the space of divisorial valuations on $X$, so that $X^{\di }\subset X^{\val} \subset X\an.$

%  and the space the space of valuations of $X$. In fact, we can think of $X\an$ as the topological space whose points are \emph{semivaluations} on $X$, i.e. valuations $v: k(Y)^* \rightarrow \mathbb{R}$ for a subvariety $Y \subset X$. The space $X^{\val}$ is a dense subset of $X\an$, and $X\an$ can thus be viewed as a compactification of $X^{\val}$. 
	
	The   $\Q$-line $L$ bundle on $X$ induces a $\Q$-line bundle $L\an$ on $X\an$; rather than $L\an$ itself, what will be important is the space of \emph{non-Archimedean metrics} on $L\an$. We will give a shallow treatment of non-Archimedean metrics, omitting how to view Fubini--Study metrics as genuine metrics (in the sense of assigning a nonnegative number to a section at a point), and will instead view them as certain \emph{functions} on $X\an$. In K\"ahler geometry, the \emph{quotient} of two Hermitian metrics can be viewed as a function, and our treatment is reasonable due to the presence of the \emph{trivial metric} in the non-Archimedean setting of interest here. Here the trivial metric $\phi_{\triv}$ is induced by the trivial test configuration  $(X\times\mathbb A^1,L)$, with $X$ given the trivial $\mathbb G_m$-action.
	
	To define a function on $X\an$ associated to a test configuration, we may first assume that $(\X,\L)$ dominates the trivial test configuration by passing to a $\mathbb G_m$-equivariant resolution of indeterminacy of the rational map $X\times\A^1 \dashrightarrow \X$ if necessary. We can thus write $\L - L = D$ (with $L$ the pullback of $L$ from $X\times \A^1$ to $\X$) where $D$ is a $\Q$-Cartier divisor supported on $\X_0$. Writing $\X_0 = \sum_{j} b_j E_j$ as a cycle, so that the $E_j \subset \X_0$ are reduced and irreducible, this function is given as \begin{equation}\label{value-of-metric}\phi_{(\X,\L)}(v_{a\ord E_j}) = b_{j}^{-1}a\ord_{E_j}(D),\end{equation} with the function vanishing elsewhere. One checks that if $p: \X' \to \X$ is a $\mathbb G_m$-equivariant morphism with  $(\X',p^*\L)$ and	$(\X,\L)$ 	test configurations for $(X,L)$, then $\phi_{(\X',p^*\L)} = \phi_{(\X,\L)}$. We always identify $\phi_{(\X,\L)}$ and $\phi_{(\X',\L')}$ if the associated functions on $X\an$ are equal. Finally note that that $\phi_{(\X,\L)}$ is supported on $X^{\val}$.

%	We begin by defining \emph{Fubini--Study} metrics on $L\an$, which correspond to nef test configurations for $(X,L)$; after this, we define more general psh metrics on $L\an$. 
%	
%	
%	Thus consider a test configuration $(\X,\L)$ with $\L$ allowed to be relatively nef. We firstly define a \emph{reduction map} $$\red_{\X}: X\an \to \X_0$$ as follows. Any point $x = (\xi_x, |\cdot|_x)$ canonically induces a subscheme $\overline{\xi_x} \subset \X$ by taking the scheme-theoretic closure of $\xi_x$ under the $\mathbb G_m$-action; we define $\red_{\X}(x)$ to be the resriction of $\overline{\xi_x}$ to $\X_0$. Next, consider a section $s_U$ of $\L$ which is defined on a Zariski open neighbourhood of $\red_{\X}(x)$ and which does not vanish at $\red_{\X}(x)$. We then define a metric, which we denote $\phi_{(\X,\L)}$, on $L\an$ by requiring $$|s_U|_{\phi_{(\X,\L)}} = 1;$$ one checks that this is independent of choice of section $s_U$. 
	
	\begin{definition} 
		We define a \emph{Fubini--Study metric} to be a metric $\phi_{(\X,\L)}$ associated to a nef test configuration $(\X,\L)$. We denote by $\mathcal H^{\NA}(L\an)$ the set of Fubini--Study metrics on $L\an$.\end{definition}

When $L$ is clear from context we denote this simply by $\H^{\NA}$. We next define \emph{flag ideals} which allow us to obtain a more concrete picture on the relationship between Fubini--Study metrics and test configurations. Studied in \cite{odaka_ross_thomas, boucksom_hisamoto_jonsson}, we refer to \cite[Section 2.1]{boucksom_jonsson_2022} for further details.
	
	\begin{definition}\label{def:flag ideals}\cite[Section 2.1]{boucksom_jonsson_2022}
		We define a \emph{flag ideal} to be a vertical fractional ideal sheaf $\mathfrak{a}$ on $X\times \mathbb{A}^1$, i.e. a $\mathbb{G}_m$-invariant, coherent fractional ideal sheaf on $X\times \mathbb{A}^1$ that is trivial on $X\times \mathbb{G}_m$. 
	\end{definition}
	Here a vertical ideal sheaf by definition satisfies the condition that $\scO_{X\times \mathbb A^1}/\mfa$ is supported on $X\times \{0\}$. Fractional ideal sheaves are included as in the definition of a test configuration we allow $\L$ to be a $\Q$-line bundle. Any flag ideal admits a decomposition $$\mathfrak{a} = \sum_{\lambda\in \mathbb{Z}}t^{-\lambda} \mathfrak{a}_{\lambda}$$
	where $t$ is the coordinate of $\mathbb{A}^1$ and $\mathfrak{a}_{\lambda}\subset \mathcal{O}_X$ is a non-increasing sequence of integrally closed ideals on $X$ with $\mathfrak{a}_{\lambda} = 0$ for $\lambda \gg0$ and $\mathfrak{a}_{\lambda} = \mathcal{O}_X$ for $\lambda \ll0$. In addition, every test configuration $\X$ dominating the trivial one is given as the blow up $$\X = \operatorname{Bl}_{\mathfrak{a}}X\times\pr^1,$$ where with $D$ the exceptional divisor we define $\L\coloneqq L+D.$ Under this correspondence, the function $\phi_{(\X,\L)} = \phi_{\mfa}$ satisfies the property \cite[Equation 2.4]{boucksom_jonsson_2022} \begin{equation}\label{flag-function}\phi_{\mathfrak{a}\cdot\mathfrak{a}'} = \phi_{\mathfrak{a}} + \phi_{\mathfrak{a}'},\end{equation} where we use that the product of two flag ideals is a flag ideal.

	%In this way, there is a one-to-one correspondence between flag ideals and Fubini--Study metrics \cite[Theorem 2.7, Lemma 2.9]{boucksom_jonsson_2022}.
% 	
%	
%
%	
%	
%	and defining  $\phi_{\mathfrak{a}}\colon X\an\rightarrow \mathbb{R}$ by setting 
%	
%	
%	
%	 a
%	In addition, for any two flag ideals $\mathfrak{a}$, $\mathfrak{a}'$, we have 

	%A general \emph{continuous} metric is by definition of the form $\phi_{\triv} + \phi,$ where $\phi \in C^0(X\an)$, though continuous metrics will play no role in the present work. 
	
	% such metrics as functions on $X\an$ by setting $\phi_{(\X,\L)}(x)=.$ \color{red}As in \cite[\S 2]{boucksom_jonsson_2022}, the space $C^0(X)$ contains a dense subspace $\operatorname{PL}(X)$ of piecewise linear (PL) functions, where $\phi_D\in \operatorname{PL}(X)$ for a vertical $\mathbb{Q}$-Cartier divisor $D$ on some test configuration $\mathcal{X}\rightarrow\mathbb{A}^1$ for $X$, where $\phi_D(v) = \sigma(v)(D)$ for $v\in X\an$, where $\sigma \colon X\an\rightarrow \mathcal{X}\an$ is the Gauss extension. \color{black}
	Much as in K\"ahler geometry, it is also helpful to consider \emph{singular}  metrics. 
	
	\begin{definition}
		We define a \emph{plurisubharmonic metric} (or \emph{psh metric}) on $L\an$ to be a (pointwise) decreasing net of Fubini--Study metrics on $L\an$. 
	\end{definition}
	
	\begin{example}
	
	For two Fubini--Study metrics $\phi_{(\X,\L)}, \phi_{(\X',\L')}$, the condition $$ \phi_{(\X',\L')}\geq \phi_{(\X,\L)}$$means that we can find a $\mathbb G_m$-equivariant birational model $\Y$ with morphisms to both $\X,\X'$ such that on $\Y$ the difference $\L' - \L$ of pullbacks to $\Y$   is effective. 
	
	\end{example}
	
	\begin{remark}\label{psh as uniform limits}
		By \cite[Corollary 12.18 (iii)]{boucksom_jonsson_2022}, psh metrics on $L\an$ can also be viewed as (pointwise) decreasing limits of \emph{sequences} of Fubini--Study metrics on $X\an$, allowing the avoidance of nets.
	\end{remark}
	
	One should think that the theory of non-Archimedean geometry allows a language for discussing \emph{sequences} of test configurations and in particular for discussing \emph{compactness} properties for sequences of test configurations. 
	
	We will use the following extension of the Monge--Amp\`ere energy of a test configuration.
	
	\begin{definition}\cite[Sections 3, 7]{boucksom_jonsson_2022} We define the \emph{Monge--Amp\`ere energy} of a Fubini--Study metric $\phi_{(\X,\L)}$ to be $E(\phi_{(\X,\L)})= E(\X,\L)$. We extend this definition to arbitrary psh metrics $\psi$ by setting $$E(\psi) = \inf \{E(\phi_{(\X,\L)}): \phi_{(\X,\L)}\geq \psi\},$$ and define a psh metric to be of \emph{finite energy} if $E(\psi)>-\infty$. We denote by $\E^1(L\an)$ the space of finite energy psh metrics on $L\an$, or simply $\E^1$ when the polarisation $L$ is clear from context.
	\end{definition}
	
	We endow $\E^1(L\an)$ with the strong topology: the coarsest refinement of the weak topology (which requires convergence $\phi_j \to \phi$ if this holds pointwise) such that the Monge--Amp\`ere energy $E: \E^1(L\an)\to \R$ is continuous. With this topology, Fubini--Study metrics are dense in $\E^1(L\an)$.
	
	\begin{proposition}\cite[Proposition 7.7 (i)]{boucksom_jonsson_2022}\label{prop-MA-cont}
		The Monge--Amp\`ere energy is continuous along decreasing nets. In particular if $\phi_k$ is a sequence of Fubini--Study metrics decreasing to $\phi$, then $E(\phi_k)$ converges to $E(\phi)$.
		
	\end{proposition}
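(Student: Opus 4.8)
This is \cite[Proposition 7.7(i)]{boucksom_jonsson_2022}, and the plan is to deduce it from a single structural feature of $E$: monotonicity, which is immediate from the definition. If $\psi \leq \psi'$ are psh metrics then every Fubini--Study metric dominating $\psi'$ also dominates $\psi$, so the infimum defining $E(\psi)$ is over a larger family and $E(\psi) \leq E(\psi')$. Hence for a decreasing net $\psi_\alpha \searrow \psi$ of psh metrics the reals $E(\psi_\alpha)$ form a decreasing net bounded below by $E(\psi)$, so they converge to some $c \geq E(\psi)$; and the stated special case of a decreasing sequence of Fubini--Study metrics then follows at once from the general statement, each such metric being itself a Fubini--Study metric dominating the limit. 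So the entire content is the reverse inequality $c \leq E(\psi)$.

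To prove $c \leq E(\psi) = \inf\{E(\phi) : \phi \text{ Fubini--Study}, \phi \geq \psi\}$, I would fix a Fubini--Study metric $\phi \geq \psi$ and a rational $\delta > 0$ and aim to show $c \leq E(\phi)+\delta$. The crux is a Dini-type compactness step. Each $\psi_\alpha$ is upper semicontinuous on the compact space $X\an$, being a decreasing net of continuous Fubini--Study metrics, so $\psi_\alpha - \phi$ attains its maximum at some $v_\alpha \in X\an$ and $\sup_{X\an}(\psi_\alpha - \phi)$ decreases with $\alpha$. Passing to a subnet with $v_\alpha \to v_\infty$ and using upper semicontinuity of each fixed $\psi_\beta$ together with $\psi_\alpha \leq \psi_\beta$ for $\alpha \geq \beta$, one obtains $\limsup_\alpha(\psi_\alpha-\phi)(v_\alpha) \leq (\psi-\phi)(v_\infty)\leq 0$, so that $\psi_\alpha \leq \phi + \delta$ for all large $\alpha$. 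Since adding a rational constant to a Fubini--Study metric again gives a Fubini--Study metric, with $E$ shifted by that constant, monotonicity gives $E(\psi_\alpha) \leq E(\phi+\delta) = E(\phi)+\delta$ for such $\alpha$, whence $c \leq E(\phi)+\delta$. Letting $\delta \to 0$ and taking the infimum over Fubini--Study $\phi \geq \psi$ yields $c \leq E(\psi)$; the same argument, run with Fubini--Study competitors for $\psi$ of arbitrarily negative energy, covers the case $E(\psi)=-\infty$.

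The step I expect to be the main obstacle --- and the reason the obvious pointwise argument fails --- is precisely this compactness manoeuvre: the infimum defining $E(\psi)$ ranges over all Fubini--Study metrics above $\psi$, not over those above the individual $\psi_\alpha$, so one cannot simply insert a Fubini--Study competitor for $\psi$ into the family defining $E(\psi_\alpha)$. What the argument needs is to manufacture, from such a competitor, a single Fubini--Study metric dominating $\psi_\alpha$ for all large $\alpha$ up to an arbitrarily small energy shift, and this is exactly what compactness of $X\an$ plus upper semicontinuity of psh metrics provide. The remaining inputs --- continuity of Fubini--Study metrics on $X\an$, their stability under adding constants with $E(\phi+c)=E(\phi)+c$, and compactness of $X\an$ --- are standard facts already implicit in (or, for compactness, explicitly stated in) the material recalled above.
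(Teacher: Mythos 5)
The paper offers no proof of this statement---it is quoted verbatim from \cite[Proposition 7.7(i)]{boucksom_jonsson_2022}---so there is nothing internal to compare against; your argument is correct and is essentially the standard one from that reference: monotonicity of $E$ from the envelope definition, plus a Dini-type compactness lemma showing that any Fubini--Study competitor $\phi\geq\psi$ dominates $\psi_\alpha-\delta$ for $\alpha$ large, together with $E(\phi+\delta)=E(\phi)+\delta$. The one point where you lean on more than this paper actually sets up is the continuity of Fubini--Study metrics on $X\an$ (and hence upper semicontinuity of psh metrics): the paper's simplified description of $\phi_{(\X,\L)}$ as a function supported on finitely many divisorial valuations and ``vanishing elsewhere'' would not be continuous, so your Dini step implicitly uses the genuine Boucksom--Jonsson construction, where Fubini--Study metrics are continuous on all of $X\an$. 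That is a legitimate appeal to the cited framework rather than a gap.
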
%

\subsection{Measures on $X\an$}

As we have seen, test configurations are analogous to Fubini--Study metrics in non-Archimedean geometry. In K\"ahler geometry, it is beneficial to  consider volume forms and more general measures. The non-Archimedean construction of a measure associated to a metric is the following. Throughout,  if $v = c\ord_F$ is a divisorial valuation, viewed as an element of $X^{\di} \subset X\an$, we will denote by $\delta_{c\ord_F}$ the Dirac mass (or Dirac measure) supported  at $ v = c\ord_F$. 
	\begin{definition}\cite[Section 3.2]{boucksom_jonsson_2022} Denote by $\X_0 = \sum_j b_j E_j$ the central fibre of a test configuration $(\X,\L)$ as a cycle, so that the $E_j$ are reduced and irreducible. 
We define the \emph{Monge--Amp\`ere measure} $\MA(\phi_{(\X,\L)})$ of $\phi_{(\X,\L)}$  to be $$\MA(\phi_{(\X,\L)}) = \frac{1}{L^n}\sum_j b_j(\L^n.E_j) \delta_{b_j^{-1}\ord_{E_j}}.$$ 
		%where $\delta_{b_j^{-1}\ord_{E_j}}$ is the Dirac mass at the divisorial valuation $b_j^{-1}\ord_{E_j} \in X\an$. 
	\end{definition}

In the following, we denote by $\M$ the set of Radon probability measures on $X\an$, i.e. the dual space $C^0(X\an)^{\vee}$, which we endow with the weak topology.

\begin{proposition} \cite[Proposition 7.19 (iv)]{boucksom_jonsson_2022} There is a unique extension of the Monge--Amp\`ere measure from Fubini--Study metrics to general finite energy metrics defined in such a way that the map $\phi \to \MA(\phi)$ is continuous along decreasing nets.\end{proposition}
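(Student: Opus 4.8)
The plan is to deduce the statement from three facts established above: that Fubini--Study metrics are dense in $\E^1$; that every finite energy metric arises as a decreasing limit of a net of Fubini--Study metrics (immediate from the definitions of psh and of finite energy metric, or from Remark \ref{psh as uniform limits}); and that the Monge--Amp\`ere energy is continuous along decreasing nets (Proposition \ref{prop-MA-cont}), following \cite{boucksom_jonsson_2022}. Uniqueness would then be automatic: given $\phi \in \E^1$, one fixes a decreasing net of Fubini--Study metrics $\phi_\alpha \searrow \phi$, and continuity along decreasing nets compels $\MA(\phi)$ to equal the weak limit of the already-defined probability measures $\MA(\phi_\alpha)$. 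It would then remain to show that this weak limit exists, is independent of the approximating net, and yields an assignment which is continuous along decreasing nets of $\E^1$.

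For existence of the limit I would argue as follows. Since $X\an$ is compact Hausdorff, the set $\M$ of Radon probability measures is weakly compact, so it suffices to show $\int_{X\an} f\, \MA(\phi_\alpha)$ converges for $f$ in a dense subspace of $C^0(X\an)$; one takes the subspace of differences $f = \eta - \eta'$ of Fubini--Study metrics, which is dense in $C^0(X\an)$ by \cite{boucksom_jonsson_2022}. The key input, supplied by the non-Archimedean pluripotential calculus of \cite{boucksom_jonsson_2022} --- integration by parts, and the monotonicity of mixed Monge--Amp\`ere energies on a common model --- is the estimate
$$\left| \int_{X\an} f\, \big( \MA(\phi) - \MA(\psi) \big) \right| \;\leq\; C_f\, \big( E(\phi) - E(\psi) \big)$$
for Fubini--Study metrics $\phi \geq \psi$, with $C_f$ depending only on $f$ (through sup-bounds on $\eta,\eta'$): telescoping the difference of Monge--Amp\`ere measures and integrating by parts writes the left side as a finite alternating sum of nonnegative integrals $\int_{X\an}(\phi-\psi)\,\MA(\chi,\phi^{[k]},\psi^{[n-1-k]})$ with $\chi \in \{\eta,\eta'\}$, each controlled by $E(\phi)-E(\psi)$. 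Applying this with $\phi=\phi_\alpha$, $\psi=\phi_\beta$ for $\alpha \leq \beta$ and invoking $E(\phi_\alpha) \to E(\phi)$ --- so that $(E(\phi_\alpha))_\alpha$ is Cauchy --- from Proposition \ref{prop-MA-cont} shows $\big(\int_{X\an} f\, \MA(\phi_\alpha)\big)_\alpha$ is Cauchy, hence by weak compactness $\MA(\phi_\alpha) \to \mu$ weakly for some $\mu \in \M$. Taking $\phi_\alpha$ a constant net and using the same estimate shows $\mu = \MA(\phi)$ whenever $\phi$ is itself Fubini--Study, so the construction genuinely extends the given one.

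For independence of the net I would use an interlacing argument. Let $\phi_\alpha \searrow \phi$ and $\psi_\beta \searrow \phi$ be two decreasing nets of Fubini--Study metrics. Finite maxima of Fubini--Study metrics are again Fubini--Study \cite{boucksom_jonsson_2022}, so the doubly indexed net $\max(\phi_\alpha,\psi_\beta)$ consists of Fubini--Study metrics and decreases to $\phi$; for each fixed $\alpha$ it decreases to $\phi_\alpha$ as $\beta$ grows, whence by the estimate above $\MA(\max(\phi_\alpha,\psi_\beta)) \to \MA(\phi_\alpha)$, and then $\to \lim_\alpha \MA(\phi_\alpha)$. By symmetry the iterated limit in the other order equals $\lim_\beta \MA(\psi_\beta)$, and both coincide with the limit of $\MA(\max(\phi_\alpha,\psi_\beta))$ over the product net; hence they agree, and one sets $\MA(\phi) \coloneqq \mu$. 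Finally, for an arbitrary decreasing net $\phi_\gamma \searrow \phi$ in $\E^1$ one approximates each $\phi_\gamma$ from above by Fubini--Study metrics and runs the identical interlacing argument to obtain $\MA(\phi_\gamma) \to \MA(\phi)$ weakly, which is the asserted continuity.

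I expect the one genuinely substantial step to be the displayed estimate and the integration by parts identity underlying it, together with the positivity and monotonicity of mixed Monge--Amp\`ere measures on common models --- precisely the pluripotential-theoretic content developed in \cite{boucksom_jonsson_2022}. Granting it, the remainder is soft.
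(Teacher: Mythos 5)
The paper does not prove this statement; it is quoted directly from Boucksom--Jonsson (their Proposition 7.19), so there is no internal argument to compare against. Your sketch reconstructs the proof of the cited source along essentially the same lines: continuity of the (mixed) energy pairing along decreasing nets, density of differences of Fubini--Study metrics in $C^0(X\an)$ via Stone--Weierstrass, weak compactness of $\M$, and an interlacing-by-maxima argument for independence of the approximating net, with uniqueness forced by the required continuity. The one step you defer --- the integration-by-parts estimate controlling $\int_{X\an} f\,\bigl(\MA(\phi)-\MA(\psi)\bigr)$ for $\phi\geq\psi$ by the energy difference --- is indeed the technical heart; note that as literally displayed it hides the comparison of the mixed measures $\MA(\chi,\phi^{[k]},\psi^{[n-1-k]})$ (with $\chi$ a metric on an auxiliary ample class) against those built from $\phi,\psi$ alone, which in the reference is handled by nefness comparisons and yields in general a H\"older rather than Lipschitz modulus in $E(\phi)-E(\psi)$ --- still amply sufficient for your Cauchy argument.
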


The inverse problem---associating a non-Archimedean metric to a measure---is the content of the \emph{non-Archimedean Calabi--Yau theorem} (originating in \cite{BFJ2}). We will require a general version of this, which involves \emph{finite norm measures}.

%We denote by $C^0(X)$ the Banach space of continuous functions $\phi\colon X\an \rightarrow \mathbb{R}$, endowed with the supnorm, and by $C^0(X)^{\vee}$ its topological dual, i.e. the space of (signed) Radon measures on $X\an$. It contains the subspace $\mathcal{M} = \mathcal{M}(X) \subset C^0(X)^{\vee}$ of Radon probability measures, which is convex and compact for the weak topology. As in \cite[\S 2]{boucksom_jonsson_2022}, the space $C^0(X)$ contains a dense subspace $\operatorname{PL}(X)$ of piecewise linear (PL) functions, where $\phi_D\in \operatorname{PL}(X)$ for a vertical $\mathbb{Q}$-Cartier divisor $D$ on some test configuration $\mathcal{X}\rightarrow\mathbb{A}^1$ for $X$, where $\phi_D(v) = \sigma(v)(D)$ for $v\in X\an$, where $\sigma \colon X\an\rightarrow \mathcal{X}\an$ is the Gauss extension. This construction is invariant under pullback to a higher test configuration, and one can thus always assume that $\mathcal{X}$ dominates the trivial test configuration $X \times \mathbb{A}^1$. 

\begin{definition}\cite[Definition 9.1]{boucksom_jonsson_2022} The \emph{norm} of a measure $\mu\in \mathcal{M}$ is defined as 
	$$\lVert \mu \rVert_{L}\coloneqq \sup_{\phi\in \mathcal{E}^1} \bigg\{ E(\phi) - \int_{X\an}\phi \dmu\bigg\}\in [0,+\infty].$$
	The space $\mathcal{M}^1 \subset\mathcal{M}$ of measures of \emph{finite norm} is defined as the set $$\mathcal{M}^1\coloneqq \{\mu\in\M \ | \ \lVert \mu\rVert_L < \infty\}.$$\end{definition}

The following then allows us to pass freely between measures and non-Archimedean metrics.

\begin{theorem}\cite[Theorem A]{boucksom_jonsson_2022}\label{BJ-CY} The Monge--Amp\`ere operator defines a bijection $$\E^1(L\an)/\R \to \mathcal{M}^1,$$ where $\E^1(L\an)/\R$ denotes finite energy metrics modulo the addition of constants. Furthermore, given a measure $\mu$, if $\MA(\phi) = \mu\in  \mathcal{M}^1$ and $\int_{X\an}\phi\dmu = 0$  then $$\|\mu\|_L = E(\phi).$$
\end{theorem}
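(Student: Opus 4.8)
The plan is to follow the non-Archimedean incarnation of the variational approach to the Monge--Amp\`ere equation, in the spirit of Berman--Boucksom--Guedj--Zeriahi. Fix $\mu \in \M$ and consider the functional
$$F_\mu(\phi) = E(\phi) - \int_{X\an} \phi\,\dmu, \qquad \phi \in \E^1(L\an).$$
Since $E(\phi + c) = E(\phi) + c$ and $\mu$ is a probability measure, $F_\mu$ is invariant under $\phi \mapsto \phi + c$, so it descends to $\E^1(L\an)/\R$; and by definition $\sup_{\E^1} F_\mu = \|\mu\|_L$. I would first show that $\mu \in \M^1$ if and only if $F_\mu$ is bounded above, and that in this case $F_\mu$ attains its supremum at some $\phi_\mu \in \E^1$. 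This rests on compactness of the energy sublevel sets $\{E \ge -C\}$ (for the weak topology on $\E^1$), semicontinuity properties of $E$, and the observation that the finite-norm hypothesis on $\mu$ is exactly what prevents a maximizing net from losing mass ``at $-\infty$''; a maximizing net then subconverges to a maximizer.

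For surjectivity I would verify that $\phi_\mu$ solves $\MA(\phi_\mu) = \mu$ via the Euler--Lagrange equation. Given any $v \in C^0(X\an)$, let $P$ denote the psh envelope operator and differentiate $t \mapsto F_\mu(P(\phi_\mu + tv))$ at $t = 0$; the orthogonality relation $\int_{X\an}(P(\psi) - \psi)\,\MA(P(\psi)) = 0$ gives $\frac{d}{dt}\big|_{t=0}E(P(\phi_\mu + tv)) = \int_{X\an} v\,\MA(\phi_\mu)$, while the linear term contributes at most $\int_{X\an} v\,\dmu$; since $\phi_\mu$ is a maximizer this forces $\int_{X\an} v\,\MA(\phi_\mu) \le \int_{X\an} v\,\dmu$ for every $v$, and replacing $v$ by $-v$ yields $\MA(\phi_\mu) = \mu$. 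For injectivity, suppose $\MA(\phi_0) = \MA(\phi_1) = \mu$. The function $g(t) = E((1-t)\phi_0 + t\phi_1)$ is concave on $[0,1]$ (concavity of $E$ along segments), with one-sided derivatives $g'(0^+) = \int_{X\an}(\phi_1 - \phi_0)\,\MA(\phi_0)$ and $g'(1^-) = \int_{X\an}(\phi_1 - \phi_0)\,\MA(\phi_1)$, both equal to $\int_{X\an}(\phi_1 - \phi_0)\,\dmu$; a concave function whose one-sided derivatives agree at its endpoints is affine, and affineness of $g$ forces $\phi_1 - \phi_0$ to be constant --- the non-Archimedean analogue of uniqueness in the Calabi conjecture, deduced from the equality case in the concavity of $E$ together with a domination principle for finite-energy psh metrics on $X\an$.

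Finally, for the norm identity, take $\mu \in \M^1$ and normalize the solution $\phi := \phi_\mu$ of $\MA(\phi) = \mu$ so that $\int_{X\an} \phi\,\dmu = 0$. For arbitrary $\psi \in \E^1$, concavity of $E$ along the segment from $\phi$ to $\psi$ together with the Euler--Lagrange equation $\MA(\phi) = \mu$ gives
$$E(\psi) - E(\phi) \le \int_{X\an}(\psi - \phi)\,\MA(\phi) = \int_{X\an}\psi\,\dmu,$$
hence $E(\psi) - \int_{X\an}\psi\,\dmu \le E(\phi) = E(\phi) - \int_{X\an}\phi\,\dmu$, so $\phi$ realizes the supremum in the definition of $\|\mu\|_L$ and $\|\mu\|_L = E(\phi)$. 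I expect the crux to be the analytic foundations of the first step: proving compactness of the energy sublevel sets and the correct semicontinuity of $\phi \mapsto \int_{X\an}\phi\,\dmu$ in the topology of $\E^1$, and pinpointing the finite-norm condition as the precise obstruction to loss of mass. Secondary difficulties are the differentiability of $E$ along envelopes (the orthogonality property) and the strict-concavity/domination statement behind injectivity. In practice it is cleaner to first solve $\MA(\phi) = \mu$ for a dense class of ``tame'' measures --- for instance finite atomic measures on divisorial valuations arising directly from Fubini--Study metrics --- and then pass to the general case by approximation, using continuity of $E$ and of $\MA$ along monotone nets.
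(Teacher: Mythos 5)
The paper offers no proof of this statement: it is imported verbatim as \cite[Theorem A]{boucksom_jonsson_2022}, so there is no internal argument to compare yours against. That said, your variational outline is essentially the strategy Boucksom--Jonsson actually follow in the cited source (the non-Archimedean incarnation of the Berman--Boucksom--Guedj--Zeriahi method): maximise $F_\mu(\phi)=E(\phi)-\int_{X\an}\phi\,\dmu$, identify finiteness of $\|\mu\|_L$ with boundedness above, extract a maximiser by compactness, derive $\MA(\phi_\mu)=\mu$ from differentiability of $E\circ P$ via orthogonality, and get uniqueness from the equality case of concavity plus a domination principle. The norm identity at the end is correctly deduced from concavity of $E$ along segments together with the derivative formula $\tfrac{d}{dt}\big|_{t=0^+}E((1-t)\phi+t\psi)=\int_{X\an}(\psi-\phi)\,\MA(\phi)$.

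As a proof, however, what you have written is a route map rather than an argument, and the items you defer are the entire content of the theorem. Two points deserve emphasis. First, everything in steps one and two rests on the \emph{envelope property} (continuity of psh envelopes $P$) for $(X,L)$ over a trivially valued field: without it the operator $P$ need not preserve the psh class, the orthogonality relation fails, and the compactness of $\{E\ge -C\}$ modulo constants in the form you need is unavailable. Establishing this for normal varieties in characteristic zero is itself a substantial theorem (proved via multiplier ideals and resolution of singularities), and it should be named as a hypothesis or input rather than absorbed into ``analytic foundations.'' Second, your closing suggestion --- first solve $\MA(\phi)=\mu$ for a dense class of measures arising from Fubini--Study metrics and then approximate --- is essentially circular for surjectivity: the divisorial measures you can solve for ``directly'' are precisely the Monge--Amp\`ere measures of Fubini--Study metrics, and showing that these are dense in $\M^1$ for the strong topology, with control sufficient to pass the solutions to the limit, already requires the compactness and semicontinuity machinery that the direct variational argument uses. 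In short: right strategy, consistent with the actual source, but not yet a proof, and the final approximation remark should be dropped or reworked.
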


\begin{remark}
	The supremum defining the norm of a measure can be taken over $\mathcal H^{\NA}(L\an)$; a benefit of considering the full space $ \mathcal{E}^1(L\an)$ is that by Theorem \ref{BJ-CY}, there is a $\phi \in \mathcal{E}^1(L\an)$ actually achieving the supremum. 
\end{remark}

The bijection $\E^1(L\an)/\R \to \mathcal{M}^1$---induced by the  non-Archimedean Calabi--Yau theorem---can be upgraded to a homeomorphism if $\mathcal{M}^1$ is given the strong topology, though this will not be used in the present work. We will, however, use a further differentiability result in associating numerical invariants to measures:

\begin{theorem}\cite[Theorem A]{bj_non_arch_ii}
	Fix a measure $\mu \in \M^1$ and denote $\Amp_{\Q}(X)$ the space of ample $\Q$-divisors modulo numerical equivalence. Then the function $\Amp_{\Q}(X) \to \R$ defined by $$L \to \|\mu\|_L$$ extends by continuity to a function on $\Amp_{\R}(X)$ which is  continuously differentiable.
\end{theorem}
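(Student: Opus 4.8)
The plan is to exploit the variational description of the norm together with the non-Archimedean Calabi--Yau theorem (Theorem~\ref{BJ-CY}). Recall $\|\mu\|_L = \sup_{\phi\in\E^1(L\an)}\big(E_L(\phi) - \int_{X\an}\phi\,d\mu\big)$, and that by Theorem~\ref{BJ-CY} the supremum is attained, uniquely modulo constants, at the solution $\phi_L\in\E^1(L\an)$ of $\MA_L(\phi_L)=\mu$; normalising by $\int_{X\an}\phi_L\,d\mu = 0$ one has $\|\mu\|_L = E_L(\phi_L)$. Thus $L\mapsto\|\mu\|_L$ is an envelope of the family $F(L,\phi) := E_L(\phi) - \int_{X\an}\phi\,d\mu$ with a unique maximiser in $\phi$ for each $L$, and the strategy is a Danskin/envelope-theorem argument: the differential in $L$ should be computed by freezing $\phi=\phi_L$ and differentiating only the explicit $L$-dependence of the Monge--Amp\`ere energy.

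Two preliminary points are needed. First, that for fixed $\mu$ the spaces $\E^1$, $\M^1$ and the finiteness of $\|\mu\|_L$ are essentially independent of the ample class as $L$ ranges over $\Amp_{\Q}(X)$; this follows from comparison estimates of the shape $c\,E_{L'}(\phi) - C \le E_L(\phi) \le C\,E_{L'}(\phi) + c$ for $L,L'$ in a compact subset of the ample cone, which also show $L\mapsto\|\mu\|_L$ is locally Lipschitz on $\Amp_{\Q}(X)$, hence extends continuously to $\Amp_{\R}(X)$. Second, one needs the regularity of the energy in $L$. For a Fubini--Study metric $\phi=\phi_{(\X,\L)}$ with $\X$ dominating the trivial test configuration and $\L = L + D$ ($D$ vertical and fixed), $E_L(\phi) = \L^{n+1}/((n+1)L^n)$ as intersection numbers on $\X$, a ratio of polynomials in $L$ with non-vanishing denominator over the ample cone, hence smooth, with directional derivative $\nabla_A E_L(\phi)$ linear in $A$ and jointly continuous in $(L,\phi)$. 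One promotes this to all of $\E^1$ using that $\phi\mapsto\nabla_A E_L(\phi)$ extends continuously from $\H^{\NA}$ to $\E^1$ (as in \cite[Theorem 7.14]{boucksom_jonsson_2022}) and that volumes, hence these intersection-theoretic energies, are continuously differentiable on the big cone \cite{BFJ}.

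With these in hand the envelope argument runs as follows. Fix $L\in\Amp_{\R}(X)$, a direction $A$, and set $L_t := L + tA$, ample for small $|t|$. Since the function $\phi_L$ on $X\an$ and the measure $\mu$ do not depend on $t$, $F(L_t,\phi_L) - F(L,\phi_L) = E_{L_t}(\phi_L) - E_L(\phi_L) = \int_0^t \nabla_A E_{L_s}(\phi_L)\,ds$, and likewise with $\phi_L$ replaced by $\phi_{L_t}$; using that $\phi_L$ maximises $F(L,\cdot)$ and $\phi_{L_t}$ maximises $F(L_t,\cdot)$ we get, for $t>0$,
\[
\int_0^t \nabla_A E_{L_s}(\phi_L)\,ds \;\le\; \|\mu\|_{L_t} - \|\mu\|_L \;\le\; \int_0^t \nabla_A E_{L_s}(\phi_{L_t})\,ds,
\]
and the reversed inequalities for $t<0$. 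Dividing by $t$ and letting $t\to0$, the outer expressions converge to $\nabla_A E_L(\phi_L)$ provided $\phi_{L_t}\to\phi_L$ and $(L',\phi)\mapsto\nabla_A E_{L'}(\phi)$ is jointly continuous; hence $L\mapsto\|\mu\|_L$ has directional derivative $\nabla_A E_L(\phi_L)$ in direction $A$. The crucial point is that the maximiser is never differentiated: morally $\tfrac{d}{dt}E_{L_t}(\phi_{L_t}) = \nabla_A E_{L_t}(\phi_{L_t}) + \langle d_\phi E_{L_t}(\phi_{L_t}),\dot\phi_{L_t}\rangle$, and since the differential of $E_{L_t}$ at $\phi_{L_t}$ is the measure $\MA_{L_t}(\phi_{L_t})$, the second term equals $\int_{X\an}\dot\phi_{L_t}\,\MA_{L_t}(\phi_{L_t}) = \int_{X\an}\dot\phi_{L_t}\,d\mu = \tfrac{d}{dt}\int_{X\an}\phi_{L_t}\,d\mu = 0$ by the normalisation. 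As $\nabla_A E_L(\phi_L)$ is linear in $A$, the Gateaux derivative is linear, so on the finite-dimensional space $\Amp_{\R}(X)$ the function is differentiable, and continuity of its differential follows once $L\mapsto\phi_L$ is strongly continuous and $(L,\phi)\mapsto\nabla_A E_L(\phi)$ is jointly continuous.

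The main obstacle is thus not the soft envelope bookkeeping but the two analytic inputs it rests on: (i) the continuous dependence $L\mapsto\phi_L$ of the solution of $\MA_L(\phi_L)=\mu$ as $L$ varies over the ample cone --- a stability statement for the non-Archimedean Calabi--Yau problem with varying polarisation, provable by a compactness/$\Gamma$-convergence argument using upper semicontinuity of $E$ and equicoercivity of the functionals $F(L,\cdot)$ uniformly in $L$; and (ii) the joint continuity and linearity-in-direction of the (mixed) Monge--Amp\`ere energy and its first variation $\nabla_A E_L(\phi)$ across the ample cone, ultimately resting on differentiability of the volume \cite{BFJ} and the fine structure of $\E^1(L\an)$ and its energy pairings from \cite{boucksom_jonsson_2022}. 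The feature that makes a clean $C^1$ statement possible --- and which I would emphasise --- is that the normalisation $\int\phi_L\,d\mu=0$ forces the variation of the maximiser to drop out entirely.
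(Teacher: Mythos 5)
A preliminary point: the paper does not prove this statement at all --- it is quoted verbatim from Boucksom--Jonsson (their Theorem A / Theorem 2.15), so there is no in-paper argument to compare yours against. Judged on its own terms, your mechanism is the right one and is in the spirit of how the result is actually obtained: write $\|\mu\|_L = E_L(\phi_L)$ for the normalised Calabi--Yau solution ($\MA(\phi_L)=\mu$, $\int_{X\an}\phi_L\,d\mu=0$) and run a Danskin/envelope argument in which the normalisation kills the variation of the maximiser. But there is a genuine gap at the central step, the two-sided sandwich. The lower bound $\|\mu\|_{L_t}\ge E_{L_t}(\phi_L)-\int_{X\an}\phi_L\,d\mu$ requires $\phi_L$ to be an admissible competitor for the polarisation $L_t$, i.e.\ $\phi_L\in\E^1(L_t\an)$, and the upper bound requires $\phi_{L_t}\in\E^1(L\an)$. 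These spaces genuinely depend on the polarisation: a psh metric on $L\an$ is a decreasing limit of Fubini--Study metrics $\phi_{(\X,\L)}$ with $\L=L+D$ relatively nef, and $L_t+D$ need not be nef when $A$ is a general class in the N\'eron--Severi space (even for $A$ nef, only one of the two inclusions holds, and only for one sign of $t$). So for a general direction at least one of your two inequalities is unjustified and the sandwich collapses. Your comparison estimates of the shape $cE_{L'}-C\le E_L\le CE_{L'}+c$ control the size of the energy but not psh-ness, so they do not repair this; the actual proof has to work with the multilinear energy pairing, which is defined intersection-theoretically for arbitrary (not necessarily nef) vertical classes and is differentiable in the polarisation by the differentiability of volumes, precisely to avoid freezing a maximiser that stops being admissible.

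The second substantive gap is your input (i): the convergence $\phi_{L_t}\to\phi_L$ of Calabi--Yau solutions as the polarisation varies is asserted as provable by ``compactness/$\Gamma$-convergence'', but this stability statement is itself a nontrivial theorem (again complicated by the fact that the $\phi_{L_t}$ live in different spaces), and without it your upper bound does not converge to $\nabla_A E_L(\phi_L)$. In short, the write-up correctly isolates where the derivative formula should come from and why the variation of the maximiser drops out, but the two analytic inputs you defer --- cross-polarisation admissibility/continuity of the energy pairing and stability of the Monge--Amp\`ere solution --- are the actual content of the theorem, so as it stands this is a sketch with the hard parts outstanding rather than a proof.
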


For an $\R$-divisor $H$, we denote $$\nabla_{H} \|\mu\|_L\coloneqq \frac{d}{dt}\bigg|_{t=0} \|\mu\|_{L+tH}$$ the resulting directional derivative.

%For any $\phi\in \mathcal{E}^1$, the Monge–Amp{\`e}re measure $\operatorname{MA}(\phi)$ lies in $\mathcal{M}^1$, and $\phi$ computes its energy, i.e. $\lVert \operatorname{MA}(\phi) \rVert = E(\phi)- \int \phi \operatorname{MA}(\phi)$.

%The Monge–Amp{\`e}re operator defines a map $\operatorname{MA}\colon \mathcal{E}^1\rightarrow \mathcal{M}^1$. We define the normalized potential of a measure $\mu$ in the image of $\operatorname{MA}$ as the unique function $\phi_{\mu} \in \mathcal{E}^1$ such that
%$$\begin{cases}
	%    \mu = \operatorname{MA}(\phi_{\mu})& \\
	%    \int \mu\phi_{\mu}=0.& 
	%\end{cases}$$
	
	A more well-behaved subspace of $\M^1$ will be sufficient for our purposes.

	\begin{definition}\label{def: div measure}
		We define a \emph{divisorial measure} on $X\an$ to be a probability measure of the form $$\mu = \sum_j a_j\delta_{v_{j}}$$ for some finite collection $v_{j}$ of divisorial valuations on $X$, so in particular $\sum_{j=0}^m a_j = 1$. We denote by $\mathcal{M}^{\di}$ the set of divisorial measures. 
	\end{definition}

	%A divisorial measure $\mu\in \mathcal{M}^{\di}$ is thus a measure of the form $\mu \sum m_i\delta_{v_i}$, for a finite set of divisorial valuations $\delta_i$, where $\sum m_i = 1$.
	
	\begin{example}\label{ex:test-objects} Any divisorial valuation canonically induces a divisorial measure. Further, the Monge--Amp\`ere measure of any Fubini--Study metric is a divisorial measure. Thus divisorial measures can  be viewed as a simultaneous generalisation of divisorial valuations and  test configurations.\end{example}
	
\subsection{Uniform K-stability on $\E^1$}

With the construction of the Monge--Amp\`ere measure of a finite energy metric $\phi$ in hand, we may extend the uniform K-stability from test configurations to $\E^1$. Firstly, it is easily checked that the value taken by the Mabuchi functional at a test configuration depends only on the associated Fubini--Study metric (and similarly the minimum norm has the same property); we denote the resulting functional $$M: \H^{\NA} \to \R.$$ We extend the Mabuchi functional in the following manner.  For this, we firstly recall that  there is a natural way to extend the log discrepancy function $A_{(X,B)}: X^{\di} \to \R$ (which is nonnegative by definition if $(X,B)$ has at worst log canonical singularities) to a function $$A_{(X,B)}: X^{\NA} \to \R,$$ using semicontinuity of $A_X$ on $X^{\di}$ \cite[Definition A.2]{bj_non_arch_ii}.

	 Integration against the Monge--Amp\`ere measure associated to  $((\X,\B);\L)$  produces  \cite[Corollary 7.18]{boucksom_hisamoto_jonsson} $$\int_{X\an} A_{(X,B)} \MA(\phi_{(\X,\L)}) =\Ent(\X,\L);$$ we denote $$\Ent(\phi_{(\X,\L)}) = \Ent(\X,\L).$$ Secondly, the functional defined on $\H^{\NA}$ by $$\phi \to \nabla_{K_X+B} E_L(\phi)$$ extends in a continuous manner to $\E^1$, essentially because it contains only Monge--Amp\`ere energy (and ``mixed  Monge--Amp\`ere energy'') terms \cite[Theorem 7.14]{boucksom_jonsson_2022}, producing a natural extension to a functional $$M: \E^1 \to \R$$ taking the form $$M(\phi) = \Ent(\phi) + \nabla_{K_X+B} E_{L_X}(\phi).$$ The minimum norm similarly extends by continuity to a functional on $\E^1$.
	 
 \begin{definition}
 We say that $((X,B);L)$ is \emph{uniformly K-stable on $\E^1$} if there is an $\epsilon>0$ such that for all $\phi \in \E^1$ we have $$M(\phi) \geq \epsilon \|\phi\|_{\min}.$$
 \end{definition}	
	
	This condition is equivalent to Boucksom--Jonsson's notion of uniform K-stability with respect to filtrations and Li's notion of uniform K-stability with respect to models; see Li \cite[Section 2.1.3]{chi-li-csck} for a discussion and further details.

	\subsection{Divisorial stability} We are now in a position to associate numerical invariants to divisorial \emph{measures} (rather than metrics), and hence to define divisorial stability, following Boucksom--Jonsson \cite{bj_non_arch_ii}.  We begin with the entropy of $((X,B);L)$, which extends the log discrepancy of a single divisorial valuation to a general divisorial measure.

	\begin{definition}We define the \emph{entropy} $\operatorname{Ent}_{(X,B)}: \mathcal{M}^{\di} \rightarrow \mathbb{R}$  to be  
		$$\operatorname{Ent}_{(X,B)}(\mu) = \int_{X\an} A_{(X,B)}\dmu$$ where $\mu$ is a divisorial measure.\end{definition}
	
	%The above definitions allow us to define invariants as below.
	
	%\begin{definition}[{\cite[Definition 3.5]{bj_non_arch_ii}}]
	%    The \emph{$\delta$-invariant} of $(X,B;\omega)$ is defined as
	%    $$\delta(X,B;\omega) = \inf_{\mu\in \mathcal{M}^1\setminus\{\mu_{triv}\}} \frac{\operatorname{Ent}_{X,B}(\mu)}{\lVert \mu \rVert_{\omega}}.$$
	%\end{definition}
	%
	%
	%
	%For any $\theta \in N^1(X)$, we define a unique strongly continuous functional $\nabla_{\theta}\lVert\cdot\rVert_{\omega}\colon \mathcal{M}^1\rightarrow \mathbb{R} $ such that 
	%$$\nabla_{\theta}\lVert\operatorname{MA}(\phi) \rVert_{\omega} = \nabla_{\theta}E_{\omega}(\phi) = \frac{d}{dt}\big|_{t=0}E_{\omega+t\theta}(\phi).$$
	%Furthermore, we have 
	%$$\frac{d}{dt}\big|_{t=0}\lVert\mu \rVert_{\omega+t\theta} = \nabla_{\theta}\lVert\mu \rVert_{\omega}. $$
	
	Writing $\mu = \sum_j a_j\delta_{v_{j}}$, the entropy is given explicitly as $$\operatorname{Ent}(\mu) = \sum_j a_j A_{(X,B)}(v_j).$$ Note that the entropy is independent of the ample line bundle $L$. This allows us to define the \emph{beta invariant} of a divisorial measure on $((X,B);L)$.
	
	\begin{definition}{\cite[Definition 4.1]{bj_non_arch_ii}}
		The \emph{beta invariant} of a divisorial measure $\mu \in \mathcal{M}^{\di}$ is defined to be
		$$\beta_{((X,B);L)}(\mu)\coloneqq \operatorname{Ent}_{(X,B)}(\mu) + \nabla_{K_{X}+B}\lVert\mu \rVert_{L}.$$
	\end{definition}

%	\begin{example}\label{numerical} If instead $\mu = \MA(\phi_{(\X,\L)})$ is the Monge--Amp\`ere measure of a normal, nef test configuration  $(\X,\L)$ with reduced central fibre, then $\beta(\mu) = \DFu(\X,\L)$ (an error term appears when $\X_0$ is nonreduced that is straightforward to manage, where one must replace the Donaldson--Futaki invariant with the \emph{non-Archimedean Mabuchi functional}) and similarly $\|\mu\|_L = \|(\X,\L)\|_m$ \cite[Section 4.1]{bj_non_arch_ii}. 
%	\end{example}
	
	This allows us to define divisorial stability.

	\begin{definition}{\cite[Definition 4.3]{bj_non_arch_ii}}
		We say that $((X,B);L)$ is
		\begin{enumerate}[(i)]
			\item \emph{divisorially semistable} if for all divisorial measures $\mu$ on $X\an$ we have $\beta(\mu) \geq 0$ on $\mathcal{M}^{\di}$;
			\item \emph{divisorially stable} if there exists an $\epsilon>0$ such that for all divisorial measures $\mu$ on $X\an$ we have $\beta(\mu) \geq \epsilon \lVert \mu\rVert_L$.
		\end{enumerate}
	\end{definition}

	We may extend the beta invariant of a divisorial measure to a general finite norm measure in a way analogous to the extension of the Mabuchi functional to $\E^1$: the entropy is defined in the same way for divisorial and finite norm measures, while the norm itself remains differentiable (in the polarisation) for a general finite energy measure \cite[Theorem 2.15]{bj_non_arch_ii}, meaning we can define $$\beta(\mu) =  \operatorname{Ent}_{(X,B)}(\mu) + \nabla_{K_{X}+B}\lVert\mu \rVert_{L}$$ for $\mu \in \E^1$. The resulting notion of stability is equivalent to divisorial stability by continuity of the various quantities in the measure.
	
	\begin{example}\label{metrics-vs-measures} If $\phi \in \E^1$, then Boucksom--Jonsson prove the key equality  \cite[Equation (4.5)]{bj_non_arch_ii} $$M(\phi) = \beta(\MA(\phi)),$$ which implies that divisorial stability is \emph{equivalent} to uniform K-stability on $\E^1$ (using also the non-Archimedean Calabi--Yau theorem); in particular, divisorial stability \emph{implies} uniform K-stability. If instead $\mu = \delta_{v_F}$ for a divisorial valuation $v_F$ on $X$ associated to a prime divisor $F$ over $X$, then $\beta(\mu) = \beta(F)$ with $\beta(F)$ the $\beta$-invariant of Definition \ref{def:beta1}, and relatedly $S_L(F) = \|\mu\|_L$ \cite[Theorem 2.18]{bj_non_arch_ii}. Thus the $\beta$-invariant of finite norm measures (in particular divisorial measures) can be seen as a simultaneous generalisation of the $\beta$-invariant of divisorial valuations, and the Mabuchi functional on the set of Fubini--Study metrics (in particular test configurations). %	Similar results hold for the various semistability notions.

	\end{example}
 	
%	In particular by Example \ref{ex:test-objects} divisorial stability allows \emph{more} test objects than valuative stability and K-stability, and by Example \ref{numerical} clearly \emph{implies} uniform valuative stability and uniform K-stability. We quote the following result for context, and merely mention that one can generalise test configurations to \emph{filtrations}, and extend the definition of the Donaldson--Futaki invariant to them in a natural way. This produces a natural notion of uniform K-stability  \emph{with respect to filtrations}, which is also equivalent to Li's notion of \emph{uniform K-stability with respect to models} \cite{chi-li-csck} (see \cite{bj_non_arch_ii} for more details).
%	
%	
%	\begin{theorem}{\cite[Theorem D]{bj_non_arch_ii}}
%		Divisorial stability of $((X,B);L)$ is equivalent to uniform K-stability of $((X,B);L)$ with respect to filtrations. In particular divisorial stability implies uniform K-stability.\end{theorem}
%	
%	
	\subsection{Equivariant divisorial stability}
	
	Consider now a finite group $G$ acting on the projective variety $X$. Since $G$ acts on $X$, it acts on the function field of $X$ and hence on $X^{\di}$ by setting $(g(v))(f) = v(g^*f)$.

	\begin{definition}\label{G-invariant measures}
		We say that a divisorial measure $\mu = \sum_{j}  a_j\delta_{v_{j}}$ is $G$-\emph{invariant} if for all $g \in G$ $$\mu = \sum_{j} a_j\delta_{g(v_{j})}.$$  We denote the space of $G$-invariant divisorial measures by $\mathcal{M}^{\di, G}_Y$.
	\end{definition} 
	
	We will consider pushforwards of measures in Section \ref{finite-measures}, where we will show that this  condition asks $g_*\mu = \mu$ for all $g \in G$. 
	
	%	We similarly define $G$-invariant metrics on $X$. Consider an automorphism $g\colon X \rightarrow X$, induced by an element $g\in G$, and a metric $\phi$ on $X$. The pullback $g^* \phi$  of a psh metric $\phi$ is a well-defined psh metric on $X$ by \cite[Proposition 3.6]{boucksom_jonsson_2022}, whose construction will be 
	%	
	%		
	%	\begin{definition}\label{G-invariant metrics}
		%		We say that a metric $\phi$ on $X$ is $G$\emph{-invariant} if for all $g\in G$
		%		$$g^*\phi=\phi.$$
		%		We denote by $\E^{1, G}(L^{\an})$ the space of finite energy $G$-invariant metrics.
		%	\end{definition}
	
	% We also define a similar analogue for the energy of a measure on $X^{\an}$.
	
	% \begin{definition}The $G$\emph{-invariant energy} of a  measure $\mu\in \mathcal{M}$ is defined as 
		% 	$$\lVert \mu \rVert_{L}^G\coloneqq \sup_{\phi\in \mathcal{E}^{1,G}} \bigg\{ E(\phi) - \int\phi\mu\bigg\}\in [0,+\infty].$$\end{definition}
	
	% We are now in a position to define the $G$-invariant beta invariant. 
	
	% \begin{definition}{\cite[Definition 4.1]{bj_non_arch_ii}}
		% 	The \emph{$G$-invariant beta invariant} of a divisorial measure $\mu \in \mathcal{M}^{\di}$ is defined to be
		% 	$$\beta_{((X,B);L)}^G(\mu)\coloneqq \operatorname{Ent}_{(X,B)}(\mu) + \nabla_{K_{X}+B}\lVert\mu \rVert_{L}^G.$$
		% \end{definition}
	
	%The definition of a $G$-invariant beta invariant allows us 
	We are now in a position to introduce the notion of \emph{$G$-equivariant divisorial stability}.
	
	\begin{definition}\label{G-equivariant divisorial stability}
		We say that $((X,B);L)$ is
		\begin{enumerate}[(i)]
			\item \emph{$G$-equivariantly divisorially semistable} if for all $G$-invariant divisorial measures $\mu$ on $X\an$ we have $\beta(\mu) \geq 0$ on $\mathcal{M}^{\di}$;
			\item \emph{$G$-equivariantly divisorially stable} if there exists an $\epsilon>0$ such that for all $G$-invariant divisorial measures $\mu$ on $X\an$ we have $\beta(\mu) \geq \epsilon \lVert \mu\rVert$.
		\end{enumerate}
	\end{definition}
	
	To compare with equivariant notions of uniform K-stability, we first make the following definition.

	\begin{remark}
		In Section \ref{Sec:pushpullmetrics}, we define pullbacks of metrics under morphisms, which for $g: X \to X$ we denote $g^*\phi$. 
	%	With this, we may define $G$-invariant metrics by requiring that  $g^*\phi = \phi$ for all $g\in G$. We then denote by $\mathcal{E}^{1,G}$ the space of $G$-invariant finite energy psh metrics.
		 Furthermore, in Corollary \ref{energies of G invariant measures}, we show that for a $G$-invariant divisorial measure $\mu$  the sup defining the norm of $\mu$ can be taken over $G$-\emph{invariant} psh metrics, namely$$ \lVert \mu \rVert_{L}= \sup_{\phi\in \mathcal{E}^{1,G}} \bigg\{ E(\phi) - \int_{X\an}\phi\dmu\bigg\}$$ with $\E^{1,G}(L_X\an)$ the space of $G$-invariant finite energy metrics, giving some justification for the definition.
	\end{remark}

	\begin{remark}	We show in Theorem \ref{equiv-BJ} that $G$-equivariant divisorial stability is equivalent to uniform K-stability on $\E^{1,G}$, primarily using the work of Boucksom--Jonsson described in Example \ref{metrics-vs-measures} and some equivariant non-Archimedean geometry. We expect that, analogously to  the Fano case \cite{datar-szek, ZZ}, $G$-equivariant divisorial stability and divisorial stability are actually equivalent. \end{remark}
	
	\section{Divisorial stability under finite covers}
	
	Our next aim is to prove Theorem \ref{intromainthm}, explaining the behaviour of divisorial stability under finite covers. The level of generality of Theorem \ref{intromainthm} is an arbitrary Galois cover $\pi: Y \to X$ defined as the quotient under a group $G$, such that $L_Y = \pi^*L_X$ is ample and $$K_Y+\Delta_Y = \pi^*(K_X+\Delta_X)$$ for effective $\Q$-divisors $\Delta_Y, \Delta_X$. To ease notation, we prove this result in the notationally simpler case when $G$ is cyclic and $B$ is an irreducible $\Q$-divisor on $X$ such that by Riemann--Hurwitz $$K_Y = \pi^*\left(K_X +\left(1-\frac{1}{m}\right)B \right).$$ This is the most important special case for applications; the proof in the general case is identical, but requires an extra summation index at most steps.

	More precisely, our setup is the following. We take a normal projective variety $Y$ with a $G$-action, where $G$ is a finite cyclic group of degree $m$ and where $K_Y$ is $\Q$-Cartier. We let $X = Y/G$ be the quotient of $X$ by $G$, write $\pi: Y \to X$ for the resulting quotient map and let $B \subset X$ be the branch divisor. It follows that $K_X+\left(1-\frac{1}{m}\right)B$ is $\Q$-Cartier and satisfies $$K_Y = \pi^*\left(K_X +\left(1-\frac{1}{m}\right)B \right)$$  by Riemann--Hurwitz. We assume that $G$ lifts to an action on an ample $\Q$-line bundle $L_Y$ on $Y$ and let $L_X$ be its quotient, so that $\pi^*L_X = L_Y$. 
	
	\subsection{Finite maps between analytifications}
	
	The map $\pi: Y \to X$ induces a map $\pi\an: Y\an\to X\an$ defined by $$\pi\an(V, |\cdot|_V) =  (\pi(V), |\cdot|_{\pi(V)});$$ we begin by giving a more explicit, geometric description of this map on divisorial valuations. For a divisorial valuation $V$ is simply taken to be $Y$, so since $\pi$ is surjective, we obtain a valuation on $X$ from one on $Y$. Recall in general that the image of a valuation $v$ on $Y$ is defined for a rational function $f\in k(X)$ by setting $$\pi(v)(f) = v(\pi^*f).$$
	
	\begin{proposition}\label{divisorial_points_map_to_div_points}
		Let $u = c\ord_F\in Y^{\di}$ be a divisorial valuation on $Y$. Then $F$ can be realised as a prime divisor on a birational model $Y'$ of $Y$ such that $Y' \to Y$ is $G$-equivariant. Further, denoting $X' = Y'/G$ and denoting $D$ the image of $F$ in $X'$, then $\pi(u)$ is the divisorial valuation associated to $e_F c\ord_D,$ where  $e_F$ denotes the ramification index of $Y'\to X'$ along $F$.
	\end{proposition}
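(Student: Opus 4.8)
The plan is to first replace the given model by a $G$-equivariant one still carrying $F$, and then to compute $\pi\an(u)$ through the ramification theory of discrete valuations. Note at the outset that, since $u=c\ord_F$ is divisorial, its underlying scheme-theoretic point $\xi_y$ is the generic point of $Y$; hence by the definition of $\pi\an$ the image $\pi\an(u)$ has underlying point the generic point of $X$ and is the valuation $f\mapsto u(\pi^*f)$ on $k(X)$. So the whole task is to identify this image valuation.

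For the equivariant model, I would start from any normal projective variety $\tilde Y$ with a projective birational morphism $\tilde Y\to Y$ realising $F$ as a prime divisor, and write $\tilde Y=\Bl_{\cI}Y$ for a coherent ideal sheaf $\cI\subset\scO_Y$. Then $\cI'\coloneqq\prod_{g\in G}g^*\cI$ is $G$-invariant, $\Bl_{\cI'}Y$ dominates $\Bl_{\cI}Y=\tilde Y$, and the $G$-action on $Y$ lifts canonically to $\Bl_{\cI'}Y$. Normalising and then applying $G$-equivariant resolution of singularities (available in characteristic zero) produces a smooth projective $Y'$ with a $G$-equivariant birational morphism $Y'\to Y$ factoring through $\tilde Y$; a further equivariant blow-up lets me also demand that $(Y',F)$ be log smooth. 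The key point to verify is that $F$ survives: since $\tilde Y$ is normal, the non-isomorphism locus of $Y'\to\tilde Y$ inside $\tilde Y$ has codimension at least two (a birational morphism to a normal variety cannot contract a divisor onto a divisor), hence does not contain the divisor $F$, so the strict transform of $F$ in $Y'$ is a prime divisor inducing the same valuation $\ord_F$; I keep the notation $F\subset Y'$ for it. Arranging the group action, the factorisation through $\tilde Y$, and the persistence of $F$ all at once is the main obstacle — everything afterwards is essentially formal.

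Next I would take the quotient $X'\coloneqq Y'/G$, a normal projective variety, with finite degree-$m$ quotient map $\pi'\colon Y'\to X'$. Equivariance of $Y'\to Y$ makes it descend to a morphism $X'\to X$, which is birational because $k(X')=k(Y')^G=k(Y)^G=k(X)$; the square with vertical arrows $\pi',\pi$ commutes, and under the identifications $k(Y')=k(Y)$, $k(X')=k(X)$ the pullback $\pi'^{*}$ is identified with $\pi^*$. Let $D\coloneqq\pi'(F)\subset X'$ with its reduced structure; since $\pi'$ is finite, $\dim D=\dim F=\dim X'-1$, so $D$ is a prime divisor on $X'$, and hence a prime divisor over $X$ through the birational morphism $X'\to X$. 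Moreover $\pi'(\eta_F)=\eta_D$ for the generic points $\eta_F,\eta_D$.

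Finally, I would localise $\pi'$ at $\eta_F$ and $\eta_D$ to obtain a local inclusion of discrete valuation rings $\scO_{X',\eta_D}\hookrightarrow\scO_{Y',\eta_F}$, with associated valuations $\ord_D$ and $\ord_F$. By the very definition of the ramification index, a uniformiser $\varpi$ of $\scO_{X',\eta_D}$ satisfies $\ord_F(\pi'^{*}\varpi)=e_F$, and writing an arbitrary $f\in k(X')^{\times}$ as $\varpi^{\ord_D(f)}$ times a unit of $\scO_{X',\eta_D}$ yields $\ord_F(\pi'^{*}f)=e_F\,\ord_D(f)$. Combining this with the first paragraph, for $f\in k(X)$ we get $\pi\an(u)(f)=u(\pi^*f)=c\,\ord_F(\pi'^{*}f)=c\,e_F\,\ord_D(f)$, so $\pi\an(u)$ is the divisorial valuation $v_{e_F c\,\ord_D}$ on $X$, as claimed. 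The delicate input is the equivariantisation in the second paragraph; the ramification computation itself is routine commutative algebra of discrete valuation rings.
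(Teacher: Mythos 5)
Your proposal follows essentially the same route as the paper: equivariantise the model by blowing up the product of the $G$-translates of the defining ideal (the paper cites Moody for the domination $\Bl_{\prod_g g^*\cI}Y \to \Bl_{\cI}Y$ that you assert), pass to the quotient $X'=Y'/G$, and identify $\pi\an(u)$ via the relation $\ord_F(\pi'^{*}f)=e_F\ord_D(f)$, which you derive directly from the DVR definition of ramification where the paper cites a reference. The extra normalisation/equivariant-resolution step and the check that $F$ survives as a strict transform are harmless refinements and do not change the argument.
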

	
	%, hence divisorial points are mapped to divisorial points. 
	
	\begin{proof}

		We begin by replacing an arbitrary birational model $Y'$ of $Y$ with a birational model $Y'' \to Y$ which admits a lift of the $G$-action, in such a way that the morphism $Y'' \to Y$ is $G$-equivariant. It suffices to construct $Y''$ as a blowup of $Y$ along a $G$-invariant subscheme of $Y$, since in this case the $G$-action lifts automatically by the universal property of blowups. 
		
		Since $Y'$ is birational to $Y$, we may write $Y' = \Bl_{\mathcal{I}}Y$, where $\mathcal{I}$ is an ideal sheaf. We consider the orbit $$\mathcal{I}\cdot g^{-1}\mathcal{I} \cdot\ldots \cdot (g^{m-1})^{-1}\cdot \mathcal{I} \subset Y$$  of $\mathcal I$, which is a $G$-\emph{invariant} ideal sheaf (and where  $g^{-1}\mathcal I$ denotes the inverse image of $\mathcal I$ under $g$). Letting $Y'' \coloneqq \operatorname{Bl}_{\mathcal{I}\cdot g^{-1}\mathcal{I} \cdot\ldots \cdot (g^{m-1})^{-1}\cdot \mathcal{I}} Y$, by \cite[Corollary 1]{moody_2001} (namely, we use that the blowup of a product of ideal sheaves is the successive blowup of one factor and then the total transform of the other factor), we obtain birational morphisms $Y''\rightarrow Y' \rightarrow Y$, and by construction $Y''$ admits a $G$-action. Thus we replace $F\subset Y'$ with its proper transform on $Y''$, which does not modify the divisorial valuation itself.
		
		%Similarly, as in \cite{liu_zhu_2021}, starting from a birational model $X'\rightarrow X$ of $X$, we can define $Y'$ to be the normalization of the fiber product $Y \times_X X'$, where $X' = Y'/G$.
		
		As we now  assume $Y'$ admits a $G$-action making the morphism $Y'\to Y$ a $G$-equivariant morphism, we may  take the quotient $Y'/G$ of $Y'$ by $G$; we define $X'=Y'/G$. We then have a commutative diagram
		\begin{center}
			\begin{tikzcd}
				Y'\arrow[r, "\pi'"]\arrow[d] & X' \arrow[d]\\
				Y \arrow[r, "\pi"] & X,
			\end{tikzcd}
		\end{center} since $Y' \to X$ is $G$-invariant. 
		
		Setting $D = \pi'(F)$, it follows for example from \cite[Exercise 2.2]{silverman_2016} that $$\ord_F((\pi')^* f) =e_F \ord_D(f),$$
		where $e_F$ is the ramification index of $Y' \to X'$ along $F$ and $f \in k(X)$. This  completes the proof by the definition of the image of a valuation. \end{proof}

		\subsection{Pullbacks and pushforwards of metrics  under finite covers}\label{Sec:pushpullmetrics}

		Our next goal is  define pushforwards and pullbacks of metrics, in order to relate $G$-invariant Fubini--Study metrics on $Y$ to Fubini--Study metrics on $X$. 
		
		We recall the definition of the pullback of a psh metric, as defined by Boucksom--Jonsson \cite[Proposition 3.6]{boucksom_jonsson_2022}. We begin with a Fubini--Study metric $\phi$ on $L\an$ induced by a test configuration $(\X,\L_{\X})$ for $(X,L_X)$. The $\mathbb{G}_m$-equivariant rational map $Y\times \mathbb{A}^1 \dashrightarrow \X$ induced by $\pi$ admits a $\mathbb{G}_m$-equivariant resolution of indeterminacies, inducing a test configuration $(\mathcal{Y}, \L_{\Y})$ for $(Y,L_Y)$,  where $\L_{\Y}$ is the pullback of $\L_{\X}$ through the morphism $\Y \to \X$; this test configuration dominates $Y\times\mathbb A ^1$ by construction.

		\begin{definition} We define the \emph{pullback} of the Fubini--Study metric $\phi$ on $L_X\an$ associated to a test configuration $(\X,\L)$ to be the Fubini--Study metric on $L_Y\an$ induced by the test configuration $(\mathcal{Y}, \L_{\Y}).$  \end{definition}
		
		The pullback extends to arbitrary psh metrics by an approximation argument.

\begin{definition}\label{def:g-inv} We say that a psh metric $\phi$ on $L_X\an$ is \emph{$G$-invariant} if $g^*\phi = \phi$ for all $g \in G$. We denote by $\E^{1,G}(L_X\an)$ the space of $G$-invariant finite energy metrics.\end{definition}
		
		% In terms of Fubini--Study metrics, for a Fubini--Study metric $\phi = \phi_{(\X,\L_{\X})}$ corresponding to test configuration $(\X,\L_{\X})$, $\pi^* \phi$ is the Fubini--Study metric on $Y$, associated to the test configuration $(\Y,\L_{\Y})$, i.e. $\pi^* \phi = \phi_{(\Y,\L_{\Y})}$, where $(\Y,\L_{\Y})$ is the fibre product
		%		\begin{center}
			%			\begin{tikzcd}
				%				\Y \arrow[r]\arrow[d, "p_{\X}"] & Y\times \mathbb{P}^1 \arrow[d, "\pi"]\\
				%				\X \arrow[r] & X\times \mathbb{P}^1,
				%			\end{tikzcd}
			%		\end{center} 
		%		i.e. $\Y = \X \times_{X\times \mathbb{P}^1} Y \times \mathbb{P}^1$.
		We next define pushforwards of $G$-invariant Fubini--Study metrics in a similar spirit. Let $\phi$ be a $G$-invariant Fubini--Study metric $\phi$ on $L_Y\an$, which hence corresponds to a test configuration $(\Y,\L_{\Y})$ which can be taken to dominate the trivial test configuration. We will show in Proposition \ref{isomorphism between metrics of finite energy} that $(\Y,\L_{\Y})$ can be taken to be $G$-invariant, in the sense that it admits a $G$-action inducing the fixed action on $Y$ and commuting with the $\mathbb G_m$-action. Then by \cite[Lemma 3.1]{dervan_2016}, taking the quotient of  $(\Y,\L_{\Y})$ by the $G$-action induces a test configuration $(\X,\L_{\X})$ for $(X,L_X)$  such that $\pi^*\L_{\X} = \L_{\Y}$, where $\pi$ is the quotient map; by construction,  $(\X,\L_{\X})$ dominates the trivial test configuration provided $(\Y,\L_{\Y})$ dominates the trivial test configuration. 
		
		\begin{definition} We define the \emph{pushforward} of a $G$-invariant Fubini--Study metric $\phi$ on $L_Y\an$ corresponding to a $G$-invariant test configuration $(\Y,\L_{\Y})$  to be the Fubini--Study metric on $L_X\an$  associated to the test configuration obtained as the quotient of $(\Y,\L_{\Y})$ by $G$.\end{definition}

		We next prove that a $G$-invariant psh metric $\phi$ on $L_Y\an$ (which is a decreasing limit of Fubini--Study metrics) is  a decreasing limit of $G$-invariant Fubini--Study metrics  $\phi_k$, and that these  $G$-invariant Fubini--Study metrics can further be taken to be associated to $G$-invariant test configurations. %The lemma just proven thus implies that $\pi_* \phi_k$ is also a decreasing sequence of Fubini--Study metrics on $L_X\an$, producing a psh metric on $L_X\an$ which we define to be $\pi_*\phi$, producing a well-defined pushforward on arbitrary $G$-invariant psh metrics on $L_Y\an$.
		
		\begin{proposition}\label{isomorphism between metrics of finite energy}
			Every $G$-invariant psh metric can be realised as a decreasing limit of $G$-invariant Fubini--Study metrics on $L_Y\an$ associated to explicitly defined $G$-invariant test configurations.
			
		\end{proposition}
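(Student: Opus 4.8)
The plan is to take an arbitrary decreasing Fubini--Study approximation of $\phi$ and then \emph{symmetrise} it over the finite group $G$ by a pointwise maximum; the key structural inputs are that $\H^{\NA}(L_Y\an)$ is stable under pointwise maxima and that the maximum of the $G$-translates of a test configuration is visibly attached to a $G$-invariant test configuration, so the symmetrisation lands in the right class. First I would fix a $G$-invariant psh metric $\phi$ on $L_Y\an$. By Remark \ref{psh as uniform limits} (that is, \cite[Lemma 3.7, Proposition 3.10]{boucksom_jonsson_2022}) there is a decreasing sequence $(\psi_k)_k$ of Fubini--Study metrics on $L_Y\an$ converging uniformly to $\phi$; passing to a model dominating the trivial test configuration changes neither the metric nor the nefness of the polarisation, so I may assume each $\psi_k$ equals $\phi_{\mfa_k}$ for a flag ideal $\mfa_k$ on $Y\times\mathbb A^1$ with $\Bl_{\mfa_k}(Y\times\pr^1)$ semiample. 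The $\psi_k$ need not be $G$-invariant, which is the only obstruction.

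Next I would set
$$\tilde\psi_k \coloneqq \max_{g\in G} g^*\psi_k.$$
Because $g$ is an automorphism of $Y$ lifted to $L_Y$, the pullback $g^*\psi_k$ is the Fubini--Study metric whose flag ideal is the $G$-translate $g^*\mfa_k$; combining the additivity \eqref{flag-function} with the fact that the pointwise maximum of Fubini--Study metrics corresponds to the \emph{sum} of the associated flag ideals, one gets that
$$\tilde\psi_k = \phi_{\,\sum_{g\in G} g^*\mfa_k}$$
is again a Fubini--Study metric. Here $\sum_{g\in G} g^*\mfa_k$ is a $G$-invariant flag ideal, so $\Bl_{\sum_{g\in G} g^*\mfa_k}(Y\times\pr^1)$ --- equivalently the normalised join over $\pr^1$ of the $G$-translates of the test configuration of $\psi_k$ --- is an explicitly given test configuration carrying a $G$-action that lifts the fixed action on $Y$, commutes with the $\mathbb G_m$-action, preserves the polarisation and dominates the trivial test configuration. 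Semiampleness of the polarisation is inherited since $L_Y\otimes(\sum_{g\in G} g^*\mfa_k)$ is globally generated, being the sum of the globally generated sheaves $g^*(L_Y\otimes\mfa_k)=L_Y\otimes g^*\mfa_k$.

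It then remains to check that $(\tilde\psi_k)$ decreases to $\phi$. Since $g\an\colon Y\an\to Y\an$ is a homeomorphism, pullback by $g$ is order-preserving and an isometry for the supremum norm; hence each $g^*\psi_k$ is decreasing in $k$ and converges uniformly to $g^*\phi=\phi$, where $G$-invariance of $\phi$ is used. Taking the finite maximum over $g\in G$ preserves monotonicity and uniform convergence, so $\tilde\psi_k$ is decreasing in $k$ with $\tilde\psi_k\to\phi$ uniformly; moreover $\tilde\psi_k\geq\psi_k\geq\phi$ since $e\in G$ (or directly $g^*\psi_k\geq g^*\phi=\phi$ for every $g$). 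This exhibits $\phi$ as the decreasing limit of the $G$-invariant Fubini--Study metrics $\tilde\psi_k$, each attached to the explicit $G$-invariant test configuration above.

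I expect the only genuine obstacle to be the two facts used in the second paragraph: that $\H^{\NA}(L_Y\an)$ is closed under pointwise maxima and, more to the point, that the $G$-invariance of $\tilde\psi_k$ is witnessed by an \emph{honest} $G$-invariant test configuration (with a $G$-action lifting that on $Y$ and commuting with $\mathbb G_m$). The flag-ideal identity $\tilde\psi_k=\phi_{\sum_g g^*\mfa_k}$, together with the global-generation check for semiampleness, settles both at once and is what makes the argument work; the remaining points (order-preservation and uniform continuity of pullback by automorphisms, inheritance of ``dominates the trivial test configuration'', and preservation of monotonicity under a finite maximum) are routine and only need to be recorded.
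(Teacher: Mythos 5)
Your proof is correct, but it symmetrises differently from the paper: you take the pointwise \emph{maximum} $\max_{g\in G}g^*\psi_k$, realised on the blowup of the \emph{sum} of flag ideals $\sum_{g\in G}g^*\mathfrak{a}_k$, whereas the paper takes the \emph{average} $\frac{1}{|G|}\sum_{g\in G}g^*\phi_k$, realised on the blowup of the \emph{product} $\prod_{g\in G}(g^{-1}\mathfrak{a}_k)$ via the identity $\phi_{\mathfrak{a}\cdot\mathfrak{a}'}=\phi_{\mathfrak{a}}+\phi_{\mathfrak{a}'}$ of Equation \eqref{flag-function}, with polarisation $\L^G=\frac{1}{|G|}\sum_{g\in G}g^*\L$. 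Both symmetrisations yield a decreasing, $G$-invariant approximation carried by an honestly $G$-invariant test configuration, and either output serves equally well in the later application (Proposition \ref{G-invarinant measures and energy comparison}), where all that is used is that the underlying total space can be taken $G$-invariant. Two small corrections to how you justify your route: the identity you actually need is $\max(\phi_{\mathfrak{a}},\phi_{\mathfrak{b}})=\phi_{\mathfrak{a}+\mathfrak{b}}$, which follows from $v(\mathfrak{a}+\mathfrak{b})=\min(v(\mathfrak{a}),v(\mathfrak{b}))$ and is \emph{not} Equation \eqref{flag-function} (that equation governs products, which is what the paper's averaging uses); and the stability of $\mathcal H^{\NA}$ under max, together with the global-generation argument for semiampleness, is not recorded in this paper, so it should be cited from \cite[Theorem 2.7]{boucksom_jonsson_2022}, which proves exactly this by the ideal-sum argument you sketch. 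The trade-off is minor: the averaging route uses only facts already quoted in the paper and keeps the polarisation as an explicit convex combination of the translates $g^*\L$, while your max route makes $\tilde\psi_k\geq\psi_k\geq\phi$ and the $G$-invariance of the flag ideal especially transparent.
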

		\begin{proof}
			%We will first show that we may realise $\phi$ as a decreasing limit of $G$-invariant Fubini--Study metrics $\phi_k^G$, which are associated to test configurations $(\Y_k,\L_k)$ for $(Y,L_Y)$ which we may assume to dominate the trivial test configuration.    
			
			Let $\phi$ be a $G$-invariant psh metric on $Y\an$ with finite energy.	By Remark \ref{psh as uniform limits}, we may realise $\phi$ as a decreasing limit of Fubini--Study metrics $\phi_k$  associated to test configurations $(\Y_k,\L_k)$ for $(Y,L_Y)$ which we may assume dominate the trivial test configuration. We define 
			$$\phi_k^G = \sum_{g\in G}\frac{1}{|G|}g^*\phi_k$$
			which is by definition a $G$-invariant function. This is a convex combination of psh metrics, and is hence itself psh \cite[Theorem 4.7 (ii)]{boucksom_jonsson_2022}; we will see positivity more explicitly in what follows.
			
			We will first show that $\phi_k^G\to \phi$. Since $\phi$ is the decreasing limit of the Fubini--Study metrics $\phi_k$, for a point $y\in Y\an$, $\phi_k(g(y))$ decreases to $\phi(g(y)) = \phi(y)$, where we have used that $\phi$ is $G$-invariant. Hence, $\phi_k(y)^G=\phi_k^G(g(y))$ decreases to 
			$$\sum_{g\in G}\frac{1}{|G|}g^*\phi = \phi.$$
			
			We next give an explicit description of  $\phi_k^G$ as a Fubini--Study metric, for which we employ flag ideals. As $\phi_k$ is Fubini--Study, it corresponds to a nef test configuration $(\Y,\L_{\Y})$, and in addition $\Y = \Bl_{\mathfrak{a}}X\times\pr^1$	for a flag ideal $\mathfrak{a}$. Using a similar idea to the proof of Proposition \ref{divisorial_points_map_to_div_points}, we define $$\mathfrak{a}^G\coloneqq \mathfrak{a}\cdot (g^{-1}\mathfrak{a})\cdot \ldots \cdot ((g^{m-1})^{-1}\mathfrak{a}).$$  Notice that $\mathfrak{a}^G$ is a flag ideal on $Y\times\pr^1$, and set $$\Y^G = \Bl_{\mathfrak{a}^G} Y\times\pr^1,$$ which admits a $G$-action by construction. By \cite[Theorem 2.7, Proposition 3.6]{boucksom_jonsson_2022}, each $g^*\phi_k$ is Fubini--Study with associated flag ideal $g^*\phi_k =\phi_{g^{-1}\mathfrak{a}} $, where $g^{-1}\mfa$ denotes the inverse image of $\mfa$ under $g$. By Equation \eqref{flag-function}, the product of ideal sheaves  defining $\mathfrak{a}^G$ corresponds  precisely to the sum $$\phi_{\mathfrak{a}^G} = \sum_{g\in G}g^*\phi_k.$$ Thus $\phi_k^G$ is associated to the flag ideal $\mathfrak{a}^G$. 
			
			To understand the line bundle $\L^G$ on $\Y^G$ associated to the Fubini--Study metric $\phi_k^G$, note first that $\Y^G$ admits a morphism to the test configuration associated to $g^{j*}\phi_k$ for each $j$ in the same way as the proof of  Proposition \ref{divisorial_points_map_to_div_points}. The pullback metric $g^*\phi_{\L_k}$ can then be represented on $\Y^G$ itself through the pullback line bundle $g^*\L$ on $\Y^G$, since pullback of Fubini--Study metrics is defined through pulling back line bundles. Thus $\phi_k^G$ corresponds to the test configuration $(\Y^G, \L^G)$, where $$\L^G = \sum_{g\in G}\frac{1}{|G|}g^*\L;$$ note $\L^G$ is relatively nef as $\L$ is so, and similarly relatively semiample provided $\L$ is so (this can alternatively be obtained from  \cite[Proposition 2.25]{boucksom_jonsson_2022}). Since $\L$ can be viewed as a $G$-invariant $\Q$-Cartier divisor, it admits a lift of the $G$-action, meaning we have represented $\phi_k^G$ by a $G$-invariant test configuration $(\Y^G, \L^G)$. 	
			
			Thus any $G$-invariant psh metric is a decreasing limit of $G$-invariant Fubini--Study metrics induced by $G$-invariant test configurations, as claimed. 		\end{proof}

		We next relate pushforwards and pullbacks.
		
		\begin{proposition}\label{pushforward-pullback-prop}
			
			The  pushforward and pullback define an energy-preserving bijection between the set of $G$-invariant Fubini--Study metrics on $L_Y\an$ and Fubini--Study metrics on $L_X\an$. 
		\end{proposition}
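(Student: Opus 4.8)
The plan is to show that pushforward and pullback are mutually inverse maps, and that each preserves the Monge--Amp\`ere energy; surjectivity will then follow formally, since the two operations are built from the same quotient/pullback diagram of test configurations.

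First I would set up the correspondence concretely. Given a Fubini--Study metric $\psi$ on $L_X\an$, write it via a test configuration $(\X,\L_\X)$ dominating the trivial one; its pullback is the metric $\pi^*\psi$ induced by the test configuration $(\Y,\L_\Y)$ obtained from a $\mathbb G_m$-equivariant resolution of indeterminacy of $Y\times\A^1\dashrightarrow\X$, with $\L_\Y=\pi^*\L_\X$. Since $(\X,\L_\X)$ carries no $G$-action but $Y\to X$ is $G$-equivariant, the resolution can be taken $G$-equivariant, so $(\Y,\L_\Y)$ is a $G$-invariant test configuration and $\pi^*\psi$ is $G$-invariant. Conversely, by Proposition \ref{isomorphism between metrics of finite energy} every $G$-invariant Fubini--Study metric $\phi$ on $L_Y\an$ is represented by a $G$-invariant test configuration $(\Y,\L_\Y)$ dominating the trivial one, and its pushforward is the metric on $L_X\an$ associated to the quotient $(\Y/G,\L_\Y/G)$, using \cite[Lemma 3.1]{dervan_2016} and the fact that $\pi^*(\L_\Y/G)=\L_\Y$. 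The key point to check is that these constructions do not depend on the chosen test configuration representatives, which follows because two test configurations inducing the same Fubini--Study metric are dominated by a common model (on which the relevant line bundle pullbacks agree), and this domination can be arranged $G$-equivariantly by the orbit-blowup trick already used in Proposition \ref{divisorial_points_map_to_div_points} and Proposition \ref{isomorphism between metrics of finite energy}.

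Next I would verify that the two maps are inverse to each other. Starting from $\phi$ with $G$-invariant model $(\Y,\L_\Y)$, pushing forward gives $(\X,\L_\X)=(\Y/G,\L_\Y/G)$, and pulling back gives the metric attached to a $G$-equivariant resolution of $Y\times\A^1\dashrightarrow\X$; but $\Y$ itself is such a resolution (it maps to $\X$ $G$-equivariantly and dominates $Y\times\A^1$), with $\pi^*\L_\X=\L_\Y$ by construction, so we recover $\phi$. Conversely, starting from $\psi$ on $L_X\an$ with model $(\X,\L_\X)$, pulling back gives the $G$-invariant test configuration $(\Y,\L_\Y=\pi^*\L_\X)$, and pushing forward takes the quotient $(\Y/G,\L_\Y/G)$; there is a natural $G$-equivariant morphism $\Y\to\X$, hence a morphism $\Y/G\to\X$, and one checks it is birational and $\L_\Y/G$ pulls back to $\L_\Y=\pi^*\L_\X$, so $(\Y/G,\L_\Y/G)$ induces the same Fubini--Study metric as $(\X,\L_\X)$, namely $\psi$. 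In particular pushforward is surjective onto all Fubini--Study metrics on $L_X\an$ and pullback is surjective onto all $G$-invariant ones, so we indeed have a bijection.

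Finally, for the energy-preserving claim, I would compute $E$ on both sides via the intersection-theoretic formula $E(\X,\L_\X)=\L_\X^{n+1}/((n+1)L_X^n)$ on the compactified families over $\pr^1$. If $(\Y,\L_\Y)\to(\X,\L_\X)$ is the $G$-equivariant quotient map, it is finite of degree $|G|=m$, so $\L_\Y^{n+1}=\pi^*\L_\X\cdot\ldots\cdot\pi^*\L_\X=m\,\L_\X^{n+1}$; likewise $L_Y^n=\pi^*L_X\cdot\ldots\cdot\pi^*L_X=m\,L_X^n$. Hence $E(\Y,\L_\Y)=\L_\Y^{n+1}/((n+1)L_Y^n)=m\L_\X^{n+1}/((n+1)m L_X^n)=E(\X,\L_\X)$, so the bijection preserves energy. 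I expect the main obstacle to be the bookkeeping around well-definedness: making sure that passing between different test configuration models (needed to apply both the quotient construction and the resolution-of-indeterminacy construction) can always be done $G$-equivariantly and does not change the associated metric, which is where the orbit-blowup argument and \cite{moody_2001} are invoked; everything else is routine once that is in place.
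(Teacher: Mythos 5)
Your proposal is correct and follows essentially the same route as the paper: represent the metrics by ($G$-invariant) test configurations, check the two compositions are the identity by comparing models dominating one another (the paper phrases the key step via the fibre product $\X \times_{X\times\pr^1} Y\times\pr^1$ and the induced map from $\Y$, which is the same observation as your "$\Y$ is itself a resolution of indeterminacy"), and verify energy preservation by the projection formula, where the degree factor $|G|=m$ cancels between $\L_{\Y}^{n+1}$ and $L_Y^n$ exactly as you compute.
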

		\begin{proof}
			
			Consider by Proposition \ref{isomorphism between metrics of finite energy} a  $G$-invariant Fubini--Study metric $\phi$ on $L_Y\an$ which has an associated $G$-invariant test configuration $(\Y,\L_{\Y})$, so that by definition of the pushforward $\pi_*\phi$ is associated to the quotient test configuration $(\X,\L_{\X}$). To prove that the pushforward and pullback are mutual inverses, it thus suffices to prove that $$\pi^*\pi_*\phi = \phi.$$  By definition, the pullback  $\pi^*\pi_*\phi$ is the $G$-invariant Fubini--Study metric on $Y\an$ corresponding to the $G$-invariant test configuration $(\Y',\L_{\Y'})$, where $\Y' = \X \times_{X\times \mathbb{P}^1} Y \times \mathbb{P}^1$ is the fibre product

			%			.  Each  such $G$-invariant  test configuration induces a test configuration for $(X,L_X)$ by taking the quotient by the $G$-action by  \cite[Lemma 3.1]{dervan_2016}, and in this way one obtains a non-Archimedean metric $\psi_k$ on $X\an$. 

			%			
			%			As such, moving forward, we will prove the result for $G$-invariant Fubini--Study metrics on $Y\an$, and the result will extend to $G$-invariant psh metrics on 
			%			$Y\an$ by the usual approximation argument.
			
			%	We claim that $\pi^*\pi_*\phi = \phi$. 
			\begin{center}
				\begin{tikzcd}
					\Y' \arrow[r]\arrow[d, "p_{\X}"] & Y\times \mathbb{P}^1 \arrow[d, "\pi"]\\
					\X \arrow[r] & X\times \mathbb{P}^1;
				\end{tikzcd}
			\end{center}  we set  $\L_{\Y'} = p_{\X}^*\L_{\X}$. 
			By the universal property of the fibre product $\Y'$, there is an induced morphism $\rho \colon \Y\rightarrow \Y'$, which then satisfies $\rho^*\L_{\Y'} = \L_{\Y}$. It follows that the non-Archimedean metrics associated to $(\Y,\L_{\Y})$ and $(\Y',\L_{\Y'})$ are equal, proving that pullback and pushforward induce a bijection between $G$-invariant Fubini--Study metrics on $L_Y\an$ and Fubini--Study metrics on $L_X\an$.

			We finally prove that this bijection is energy-preserving. Denote by $(\X,\L_{\X})$ a test configuration associated to $\phi_{(\X, \L_{\X})}$, and denote $(\Y,\L_{\Y})$ the test configuration associated to the pullback $\pi^*\phi_{(\X, \L_{\X})}$ defined through constructing an equivariant resolution of indeterminacy of $Y\times \mathbb A^1 \dashrightarrow \X$. Let $p: \Y \to \X$ be the resulting morphism. We calculate			\begin{equation*}
				\begin{split}
					E(\phi_{(\X, \L_{\X})}) &= \frac{(\L_{\X})^{n+1}}{(n+1)(L_X)^n}, \\ &= \frac{(p^*\L_{\X})^{n+1}}{(n+1)(\pi^*L_X)^n}, \\ &= \frac{(\L_{\Y})^{n+1}}{(n+1)(L_Y)^n},\\    
					&= E(\phi_{(\Y, \L_{\Y})}),
				\end{split}
			\end{equation*}
			which shows that $E(\phi_{(\X, \L_{\X})}) = E(\phi_{(\Y,\L_{\Y})})$. A similar calculation shows for a $G$-invariant Fubini--Study metric $\phi_{(\Y,\L_{\Y})}$ on $L_Y\an$ that $$E(\phi_{(\Y,\L_{\Y})}) = E(\pi_*\phi_{(\Y,\L_{\Y})}),$$ proving the result.	\end{proof}
		
		\begin{remark}\label{rmk:pullbackMA}
			By continuity of the Monge--Amp\`ere energy stated as Proposition \ref{prop-MA-cont}, the pullback of general psh metrics also preserves the Monge--Amp\`ere energy.
		\end{remark}

		%	hence, the construction is energy-preserving. Furthermore, since $\Y$ dominates $\Y'$, the induced metric $\pi^*\pi_*\phi$ on $Y\an$ is the same as $\phi$. 
		
		%In particular, the above constructions produce an explicit energy preserving bijection between $\mathcal{H}^{\operatorname{dom}, G}(L_Y)$ and $\mathcal{H}^{\operatorname{dom}}(L_X)$, where $\mathcal{H}^{\operatorname{dom}}(L_X)$ is the set of $L_X$-psh PL functions of the form $\phi_D$, with $D\in \operatorname{VCar}(\X)_{\Q}$ for a test configuration $\X$ dominating $\X_{\triv}$. One direction of the bijection is induced by fibre product construction, and the other direction is induced by taking the quotient of $\Y$ by $G$. The result then follows from \cite[Proposition 3.10]{boucksom_jonsson_2022} and the usual approximation argument.
		
		%Now Y^{\an} \to X^{\an} is G-invariant (right?), so we can define (\pi_*\phi)(u) = \phi(\pi^{-1}(u)), where although  \pi^{-1}(u) is a set, by G-invariance the value of \phi at any point in this set is the same. 
		\subsection{Pullbacks and pushforwards of measures under finite covers}\label{finite-measures}
		
		We next relate divisorial measures on $Y$ to those on $X$. We begin by recalling the explicit construction of divisorial valuations on $X$ from those on $Y$. Given a prime divisor $F\subset Y' \to Y$  over $Y$, by Proposition \ref{divisorial_points_map_to_div_points} we may assume that $Y'$ admits a $G$-action meaning we may form the quotient $X' = Y'/G$. We denote by $\pi(F)$ the prime divisor over $X$ given by the image of $F$ under the morphism $Y' \to X'$. Proposition \ref{divisorial_points_map_to_div_points} then shows that the image of the divisorial valuation $c\ord_F$ under the map $Y^{\val} \to X^{\val}$ is the divisorial valuation $e_F c\ord_{\pi(F)}$, where $e_F$ is the ramification index of $Y' \to X'$ along $F$.
		
Let $D = \pi(F)$ be a prime divisor over $Y$. Then $\pi^*D$, the pullback cycle, takes the form $$\pi^*D = \sum_{F_j\in \pi^{-1}(D)}e_{F_j}F_j,$$ with $e_{F_j}$ the ramification index along $F_j$. The ramification indices are equal for all such $F_j$, so in this expression $e_{F_j} = e_{F_l}$ for all $j,l$. In addition the divisors $F_j$ and $F_l$ belong to the same $G$-orbit, in the sense that for all $j,l$ there exists a $g\in G$ such that $F_j = g(F_l)$.

%		where the $e_{F_i}$ are the ramification indices of the divisors $F_i$. In this particular situation, we further have $e_{F_i} = e_{F_j}$ for all $F_i$ and $F_j$ such that $\pi(F_i) = \pi(F_j) = D$.
		
		%Notice, that for divisors $F_i$ over $Y$, the $F_i$ lie in the same orbit of the action of $G$ if and only if there exists a divisor $D$ over $X$ such that $\pi^*D = \sum_{i} e_{F_i}F_i$, where the $e_{F_i}$ are the ramification indices of the action, which in this case are all equal, i.e. $e_{F_i} = e_{F_j}$ for all $i$, $j$. 
		
		Restating Definition \ref{G-invariant measures} through the explicit interpretation of the image of a divisorial valuation, a divisorial measure $$\mu = \sum_i a_i \delta_{c_i\operatorname{ord}_{F_i}}$$ is $G$-invariant if $a_i=a_l$ and $c_i=c_l$ for all $i, l$ such that $F_i$ and $F_l$ lie in the same $G$-orbit, or equivalently such that  $\pi(F_i) = \pi(F_l) = D$ for a prime divisor $D$ over $X$. We will use the following notation for $G$-invariant divisorial measures on $Y\an$:
		
		\begin{equation}\label{notation-G}\mathcal{M}^{\di, G}_Y \ni \mu = \sum_{D/X}a_D \left( \sum_{F_j\in \pi^{-1}(D)}\delta_{c_D\operatorname{ord}_{F_j}}\right).\end{equation}
		Here, the first sum $\sum_{D/X}$ is taken over all prime divisors $D$ over $X$, and is finite since there is a finite number of non-zero $a_D$, while the second sum is taken over all divisors $F_j$ over $Y$ in the preimage of $D$. The coefficients $a_D$ are arbitrary nonnegative coefficients such that $\int_{Y\an} \dmu = 1$, so that the measure is a probability measure.

		We next consider pushforwards and pullbacks of measures. For a divsiorial measure, the pushforward  is given explicitly by the following expression.
		
		\begin{lemma}\label{explicit-pushforward}If $$\mu = \sum_i a_i\delta_{c_i\ord_{F_i}}\in \mathcal{M}^{\di}_Y$$ is a divisorial measure on $Y\an$, then $$\pi_*\mu = \sum_i a_i\delta_{e_{F_i}c_i\ord_{\pi(F_i)}}\in \mathcal{M}^{\di}_X$$  is a divisorial measure on $X\an$, where $e_{F_i}$ is the ramification index along $F_i$. \end{lemma}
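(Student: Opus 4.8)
The plan is to reduce to the case of a single Dirac mass and then quote Proposition \ref{divisorial_points_map_to_div_points}. First I would note that a divisorial measure $\mu = \sum_i a_i\delta_{c_i\ord_{F_i}}$ is a \emph{finite} sum, and that pushforward is additive over finite sums: directly from Definition \ref{pushforward measure}, for a Borel set $U\subset X\an$ one has $(\pi_*\mu)(U) = \mu(\pi^{-1}(U)) = \sum_i a_i\, \delta_{c_i\ord_{F_i}}(\pi^{-1}(U))$, so $\pi_*\mu = \sum_i a_i\, \pi_*\delta_{c_i\ord_{F_i}}$. It therefore suffices to compute $\pi_*\delta_v$ for a single divisorial valuation $v = c\ord_F\in Y^{\di}$.

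For such a $v$, unwinding the definition gives $(\pi_*\delta_v)(U) = \delta_v(\pi^{-1}(U))$, which is $1$ when $v\in (\pi\an)^{-1}(U)$, i.e. when $\pi\an(v)\in U$, and $0$ otherwise; in other words $\pi_*\delta_v = \delta_{\pi\an(v)}$ is the Dirac mass at the image point $\pi\an(v)\in X\an$. Now I would identify this image: since $\pi\an$ sends a valuation $u$ on $Y$ to the valuation $f\mapsto u(\pi^*f)$ on $X$, Proposition \ref{divisorial_points_map_to_div_points} applies verbatim and yields $\pi\an(c\ord_F) = e_F\, c\ord_{\pi(F)}$, where $\pi(F)$ is the prime divisor over $X$ defined just before Definition \ref{pushforward measure} and $e_F$ is the associated ramification index. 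In particular $\pi\an(c\ord_F)$ is again divisorial, so $\pi_*\delta_{c\ord_F} = \delta_{e_F c\ord_{\pi(F)}}$ lies in $\mathcal M^{\di}_X$.

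Assembling the terms gives $\pi_*\mu = \sum_i a_i\delta_{e_{F_i}c_i\ord_{\pi(F_i)}}$, and this is a probability measure because total mass is preserved: $(\pi_*\mu)(X\an) = \mu(\pi^{-1}(X\an)) = \mu(Y\an) = 1$. There is no genuine obstacle here; all the content is carried by Proposition \ref{divisorial_points_map_to_div_points}. The only minor points worth a sentence are that $\pi\an$ is continuous, hence Borel, so $\pi^{-1}(U)$ is Borel and the pushforward is well-defined as a Radon (in fact finitely supported) measure, which is exactly the functoriality of analytification recalled in Section \ref{sec2}; and that distinct $F_i$ may have a common image $\pi(F_i)$ over $X$, in which case the displayed sum is to be read with those Dirac masses combined, which is harmless and consistent with the stated formula.
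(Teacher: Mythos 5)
Your proposal is correct and follows essentially the same route as the paper: reduce by linearity to a single Dirac mass, observe that $\pi_*\delta_v = \delta_{\pi\an(v)}$ directly from the definition of pushforward, and invoke Proposition \ref{divisorial_points_map_to_div_points} to identify $\pi\an(c\ord_F) = e_F c\ord_{\pi(F)}$ as a divisorial valuation. The extra remarks on measurability and on coincident images $\pi(F_i)$ are harmless elaborations of what the paper leaves implicit.
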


	%	Following Definition , a divisorial measure $\mu = \sum a_i \delta_{\operatorname{ord}_{F_i}}$ is $G$-invariant if and only if $a_i = a_j$ for all $i$, $j$ such that $F_i$ and $F_j$ lie in the same orbit, i.e. $\pi(F_i) = \pi(F_j)$. 
		%\begin{definition}\label{pushforward measure}
		%	We define the \emph{pushforward} of a divisorial measure  $\mu\in \mathcal{M}^{\di}_Y$ by setting, for $U\subset X\an$ a Borel set,$$(\pi_*\mu)(U)\coloneqq \mu(\pi^{-1}(U)).$$
		%\end{definition}
			
		\begin{proof} For a single Dirac mass $ \delta_u$ supported at a point $u\in Y^{\di}$, for $U\subset X\an$ we have $$(\pi_*\delta_u)(U) = \delta_{\pi(u)}$$ by definition of the pushfoward. Thus $\pi_* \delta_u  \in \mathcal{M}^{\di}_X$ is a divisorial measure, since  Proposition \ref{divisorial_points_map_to_div_points} implies that $\pi(u)$ is itself a divisorial valuation. Writing $u = c\ord_F$, by Proposition \ref{divisorial_points_map_to_div_points} its image is given explicitly by $$\pi(c\ord_F) = e_{F}c\ord_{\pi(F)}.$$ The general case is identical. %One similarly checks that  $$\pi_*\mu = \sum_{i=0}^m a_i\delta_{\pi(u_i)}\in \mathcal{M}^{\di}_X.$$ 
		\end{proof}
		
		%
		%Notice that if $\mu =$, for some $u\in Y^{\di}$ we have 
		%\begin{equation*}
		%    \pi_*(\mu)(U) = \mu(\pi^{-1}(U)) =\begin{cases}
			%        1 & \text{if } u \in \pi^{-1}(U)\\
			%        0 & \text{otherwise}
			%    \end{cases} = \begin{cases}
			%        1 & \text{if } \pi(u) \in U\\
			%        0 & \text{otherwise}
			%    \end{cases} = \delta_{\pi(u)}(U) \in \mathcal{M}^{\di}_X. 
		%\end{equation*}

		While there is a canonical pushforward, we must define pullbacks explicitly.  For a single divisorial valuation $a\ord_D$, a divisorial valuation has image $a\ord_D$ in $X\an$ if and only if it takes the form $e_F^{-1}a\ord(F),$ where $\pi(F) = D$ and $e_F$ is the ramification index along $F$, since $$\pi(e_F^{-1}c\ord(F)) = c\ord_D$$  by Proposition \ref{divisorial_points_map_to_div_points}. Writing $$\pi^*D = \sum_{F_j\in \pi^{-1}(D)}e_{F} F_j,$$ where as before the ramification indices $e_F \coloneqq e_{F_j}$ are equal for each $j$, we  define $$\pi^*\delta_{a\ord_D} = \sum_{F_j\in \pi^{-1}(D)}\frac{e_F}{|G|}\delta_{e_F^{-1}c\ord(F_j)}.$$  Note that $\pi^*\delta_{a\ord_D}$  is still a probability measure, since by the orbit-stabiliser theorem $$\sum_{F_j\in \pi^{-1}(D)} \frac{e_F}{|G|} = 1.$$We now define pullbacks of general divisorial measures in a similar way, essentially extending linearly.

		%\color{red} Theo, could you please fix the below to correct $G$-invariance? Also please define the ``normalisation parameters'' more explicitly, there are lots of numbers that sum to one, do you mean that you should divide by the size of the orbit? Also please fix the statement of Prop 3.6, the statement is that the pushforward/pullback are inverses and hence define an isomorphism. Could you also add an example---like Proposition 3.1---which explains geometrically what the pullback of a single Dirac delta at a divisorial valuation is? I think this might be in Liu-Zhu, maybe. But it seems to be implicit in the below but it should be explicit.

		%\color{black}
		
%		
%		
%		\begin{definition}\label{pullback measure}    
%			We define the \emph{pullback} of a divisorial measure $$\mu =\sum_i  a_i\delta_{c_i\ord_{D_i}} \in \mathcal{M}^{\di}_X$$ by 
%			$$\pi^*\mu = \sum_{i}a_i \sum_{u_{j} \in \pi^{-1}(c_i\ord_{D_i})}\frac{e_{F_i}}{|G|} \delta_{u_j},$$ where:
%			\begin{enumerate}[(i)]
%			\item the $u_j$ are divisorial valuations in $Y\an$ such that  $\pi(u_{j}) = c_i\ord_{D_i}$;
%			\item the pullback of $D_i$ is given as $$\pi^*D_i = \sum_j e_{F_i}F_{i,j};$$
%%			\item the normalisation coefficients are given by $$e_i= \frac{e_{F_{i,j}}}{|G|}.$$
%			\end{enumerate}
%		\end{definition}
%		
				\begin{definition}\label{pullback measure}
			We define the \emph{pullback} of a divisorial measure $$\nu =\sum_{D/X}  a_D\delta_{c_D\ord_{D}} \in \mathcal{M}^{\di}_X$$ by 
			$$\pi^*\nu = \sum_{D/X}a_D \left(\sum_{F_j \in \pi^{-1}(D)}\frac{e_{F_j}}{|G|} \delta_{e_{F_j}^{-1}c_D\ord(F_j)}\right).$$
%			\begin{enumerate}[(i)]
%			\item the $u_j$ are divisorial valuations in $Y\an$ such that  $\pi(u_{j}) = c_i\ord_{D_i}$;
%			\item the pullback of $D_i$ is given as $$\pi^*D_i = \sum_j e_{F_i}F_{i,j};$$
%%			\item the normalisation coefficients are given by $$e_i= \frac{e_{F_{i,j}}}{|G|}.$$
			%\end{enumerate}
		\end{definition}

		As before, the sum $D/X$ denotes a finite sum of prime divisors over $X$. As in the case when $\mu$ is a Dirac mass at a single divisorial valuation, it follows from the  orbit-stabiliser theorem that $\pi_*\mu$ is a probability measure.

		\begin{proposition}\label{isomorphism between divisorial measures}
			The above pushforward and pullback constructions are mutual inverses between the spaces of $G$-invariant divisorial measures on $Y\an$ and divisorial measures on $X\an$.
		\end{proposition}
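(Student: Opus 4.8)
The plan is to verify the two composites $\pi_*\circ\pi^*$ and $\pi^*\circ\pi_*$ act as the identity, working with the explicit formulas for pushforward and pullback recorded in Lemma \ref{explicit-pushforward} and Definition \ref{pullback measure}. Since both operations are defined by linearly extending their effect on Dirac masses, by linearity it suffices to check the identities on the building blocks: a single Dirac mass $\delta_{c_D\ord_D}$ for a prime divisor $D$ over $X$, and the ``orbit sum'' $\sum_{F_j\in\pi^{-1}(D)}\delta_{c_D\ord_{F_j}}$ (suitably normalised) which is the basic $G$-invariant divisorial measure on $Y\an$ in the notation of \eqref{notation-G}.

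First I would compute $\pi_*\pi^*\delta_{c_D\ord_D}$. By Definition \ref{pullback measure}, $\pi^*\delta_{c_D\ord_D} = \sum_{F_j\in\pi^{-1}(D)}\frac{e_{F_j}}{|G|}\delta_{e_{F_j}^{-1}c_D\ord(F_j)}$, and applying Lemma \ref{explicit-pushforward} to each summand sends $\delta_{e_{F_j}^{-1}c_D\ord(F_j)}$ to $\delta_{e_{F_j}\cdot e_{F_j}^{-1}c_D\ord_{\pi(F_j)}} = \delta_{c_D\ord_D}$, since $\pi(F_j)=D$ by construction. Hence $\pi_*\pi^*\delta_{c_D\ord_D} = \left(\sum_{F_j\in\pi^{-1}(D)}\frac{e_{F_j}}{|G|}\right)\delta_{c_D\ord_D} = \delta_{c_D\ord_D}$, where the coefficient is $1$ by the orbit-stabiliser theorem (the ramification indices $e_{F_j}$ agree along the orbit and equal the order of the stabiliser, while the number of $F_j$ equals $|G|/e_{F_j}$). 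Extending linearly over the finite sum $D/X$ in a general $\nu\in\mathcal M^{\di}_X$ gives $\pi_*\pi^*\nu=\nu$.

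Next I would compute $\pi^*\pi_*\mu$ for a $G$-invariant $\mu = \sum_{D/X}a_D\big(\sum_{F_j\in\pi^{-1}(D)}\delta_{c_D\ord_{F_j}}\big)$ as in \eqref{notation-G}. By Lemma \ref{explicit-pushforward}, each $\delta_{c_D\ord_{F_j}}$ pushes forward to $\delta_{e_{F_j}c_D\ord_D}$, so $\pi_*\mu = \sum_{D/X}a_D\big(\#\pi^{-1}(D)\big)\delta_{e_D c_D\ord_D}$ where $e_D$ denotes the common ramification index and $\#\pi^{-1}(D)=|G|/e_D$, giving $\pi_*\mu = \sum_{D/X} a_D\frac{|G|}{e_D}\delta_{e_D c_D\ord_D}$. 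Now applying $\pi^*$ via Definition \ref{pullback measure} to $\delta_{e_D c_D\ord_D}$ produces $\sum_{F_j\in\pi^{-1}(D)}\frac{e_D}{|G|}\delta_{e_D^{-1}(e_D c_D)\ord(F_j)} = \frac{e_D}{|G|}\sum_{F_j\in\pi^{-1}(D)}\delta_{c_D\ord(F_j)}$, and multiplying by the coefficient $a_D\frac{|G|}{e_D}$ recovers exactly $a_D\sum_{F_j\in\pi^{-1}(D)}\delta_{c_D\ord_{F_j}}$. Summing over $D/X$ gives $\pi^*\pi_*\mu=\mu$. It remains to note that both composites land in the correct space: $\pi^*$ visibly produces a $G$-invariant measure (the orbit $\pi^{-1}(D)$ is $G$-stable with uniform coefficients), and $\pi_*$ of a $G$-invariant measure is a well-defined divisorial measure on $X\an$ by Lemma \ref{explicit-pushforward} and Proposition \ref{divisorial_points_map_to_div_points}; combined with the fact (already established for Dirac masses and extended by the orbit-stabiliser count) that both operations preserve total mass $1$, this shows they restrict to mutually inverse bijections $\mathcal M^{\di,G}_Y \leftrightarrow \mathcal M^{\di}_X$.

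The only genuine subtlety—hence the step I would be most careful with—is the bookkeeping of ramification indices and orbit sizes: one must use that all $F_j$ over a fixed $D$ share a common ramification index $e_D$ (recorded in the discussion preceding Definition \ref{pullback measure}) and that $\sum_{F_j\in\pi^{-1}(D)} e_{F_j}/|G| = 1$ by orbit-stabiliser. Once these numerical facts are in hand the computation is a direct substitution, and linearity handles the passage from single orbits to arbitrary finite divisorial measures. No compactness or analytic input is needed here; this is purely a combinatorial verification on divisorial valuations, building on Proposition \ref{divisorial_points_map_to_div_points}.
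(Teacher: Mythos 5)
Your proposal is correct and follows essentially the same route as the paper: both directions are verified by direct substitution of the explicit formulas for pushforward and pullback on Dirac masses and orbit sums, with the orbit-stabiliser identity $\sum_{F_j\in\pi^{-1}(D)} e_{F_j}/|G|=1$ (equivalently $\#\pi^{-1}(D)=|G|/e_D$) supplying the cancellation of coefficients. The only cosmetic difference is that you organise the computation by linearity over single Dirac masses before summing over $D/X$, whereas the paper carries the full sum throughout; the content is identical.
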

		
		In particular pushforward and pullback induce an isomorphism $$\mathcal{M}^{\di, G}_Y \cong \mathcal{M}^{\di}_X.$$
		\begin{proof}
			We consider a $G$-invariant divisorial measure  $\mu\in \mathcal{M}^{\di, G}_Y$ and begin by showing that $$\pi^*\pi_*\mu = \mu.$$ Continuing the notation used in Equation \eqref{notation-G}, we denote $$\mu = \sum_{D/X}a_D\left( \sum_{F_j\in \pi^{-1}(D)}\delta_{c_D\operatorname{ord}_{F_j}}\right),$$
			where the first sum is taken over a finite sum of prime divisors $D$ over $X$ and the $a_D$ are coefficients such that $\sum_{D/X} a_D = 1$, and the second sum is taken over all divisors $F_j$ in the preimage of $D$. 
			
			We calculate
			\begin{equation*}
				\begin{split}
					\pi_*\mu &= \sum_{D/X} a_D \left( \sum_{F_j\in \pi^{-1}(D)} \delta_{e_{F_j}c_D\operatorname{ord}_D}\right) \\
					&= \sum_{D/X} a_D \delta_{e_{F_j}c_D\operatorname{ord}_D}\left(\sum_{F_j\in \pi^{-1}(D)} 1\right)
				\end{split}
			\end{equation*}
			where  $e_{F_j}$ is the common ramification index of the $F_j$, so $\pi^*D = e_{F_j} \sum_j F_j$. Then,
			\begin{equation*}
				\begin{split}
					\pi^*\pi_*\mu &= \sum_{D/X} a_D \left( \sum_{F_l\in \pi^{-1}(D)}\delta_{c_D\ord_{F_l}}\frac{e_{F_l}}{|G|}\left(\sum_{F_j\in \pi^{-1} (D)}1\right)\right),  \\
					&= \sum_{D/X} a_D \sum_{F_l\in \pi^{-1}(D)}\delta_{c_D\ord_{F_l}},
				\end{split}
			\end{equation*}
			where we use that $$\frac{e_{F_l}}{|G|}\sum_{F_j\in \pi^{-1}(D)}  1 =1$$ by the orbit-stabiliser theorem and the fact that $e_{F_l}=e_{F_j}$ for all $l,j$. Thus $$\pi^*\pi_*\mu = \mu,$$ as claimed.

%			
%			From Proposition \ref{divisorial_points_map_to_div_points}, we know that $\pi(u_i) = e_{F_i}\ord_D$ if and only if $u_i = \ord_{F_i}$, and $\pi(F_i) = D$. Hence, 
%			$$\pi^*\pi_*\mu = \sum_{D/X} a_D \sum_{F_i\in \pi^{-1}(D)}\delta_{\ord_{F_i}} = \mu.$$
			
			In the reverse direction, let  $\nu\in \mathcal{M}^{\di}_X$ take the form $$\nu =\sum_{D/X}  a_D\delta_{c_D\ord_{D}},$$ so that by definition $$\pi^*\nu = \sum_{D/X}a_D \left(\sum_{F_j \in \pi^{-1}(D)}\frac{e_{F_j}}{|G|} \delta_{e_{F_j}^{-1}c_D\ord_{F_j}}\right).$$ The pushforward of this measure is then given by
			
			\begin{equation*}
				\begin{split}
					\pi_*\pi^*\nu &= \sum_{D/X}a_D \left(\sum_{F_j \in \pi^{-1}(D)} \frac{e_{F_j}}{|G|}\delta_{c_D\ord_D}\right),\\
					& = \sum_{D/X}a_D \delta_{c_D\ord_D}\sum_{F_j \in \pi^{-1}(D)} \frac{e_{F_j}}{|G|},\\
					& =  \sum_{D/X}a_D \delta_{c_D\ord_D}, \\
					&=\nu,
				\end{split}    
			\end{equation*} where we again use the orbit-stabiliser theorem. This completes the proof. 					\end{proof}

	%		$$\nu = \sum a_i\delta_{v_i}$$ for some $v_i\in X^{\di}$, and $\pi^*\mu = \sum_{i}a_i \sum_{\pi(u_j) = v_i} e_{u_j}^i\delta_{u_j}$. 

		We end this section by showing that $G$-invariant measures of finite norm are given as Monge--Amp{\`e}re measures of $G$-invariant psh metrics, when $G$ is a finite group.
		
		\begin{proposition}\label{G-invarinant measures and energy comparison}
			Let $G \subset \Aut(Y,L_Y)$ be a finite group, and suppose $\mu\in \mathcal{M}^{\di,G}_Y$ is a $G$-invariant divisorial measure. Then the solution $\phi\in \E^1(L_Y\an)$ of the Monge--Amp\`ere equation $$		\MA(\phi) = \mu$$ is a $G$-invariant metric.		\end{proposition}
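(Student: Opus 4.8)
The plan is to recognise $\phi$ as \emph{the} solution of a Monge--Amp\`ere equation and then to write down a manifestly $G$-invariant solution of that same equation, so that the uniqueness in the non-Archimedean Calabi--Yau theorem forces the two to coincide. Since $\mu$ is a probability measure, I would first normalise $\phi$ so that $\int_{Y\an}\phi\,\dmu=0$; by Theorem \ref{BJ-CY}, $\phi$ is then the unique element of $\E^1(L_Y\an)$ with $\MA(\phi)=\mu$ and this normalisation. As $g^*(\phi+c)=g^*\phi+c$ for any constant $c$, the metric $\phi$ is $G$-invariant if and only if its $\R$-coset is, so it is harmless to work with the normalised representative, and it suffices to exhibit \emph{some} $G$-invariant $\psi\in\E^1(L_Y\an)$ with $\MA(\psi)=\mu$.

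To construct $\psi$, I would use Proposition \ref{isomorphism between divisorial measures} to write $\mu=\pi^*\nu$ for a divisorial measure $\nu\in\mathcal{M}^{\di}_X$, apply Theorem \ref{BJ-CY} on $X$ to obtain $\psi_X\in\E^1(L_X\an)$ with $\MA(\psi_X)=\nu$, and set $\psi\coloneqq\pi^*\psi_X$, which lies in $\E^1(L_Y\an)$ because pullback of metrics preserves the Monge--Amp\`ere energy (Remark \ref{rmk:pullbackMA}). This $\psi$ is automatically $G$-invariant: pullback of metrics is functorial for composition of morphisms, and $\pi\circ g=\pi$ for every $g\in G$ since $\pi$ is the quotient map, so $g^*\psi=g^*\pi^*\psi_X=(\pi\circ g)^*\psi_X=\pi^*\psi_X=\psi$. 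The substantive point is then to check $\MA(\psi)=\mu$, i.e.\ that the Monge--Amp\`ere operator intertwines pullback of metrics with pullback of measures: $\MA(\pi^*\psi_X)=\pi^*\MA(\psi_X)$, where the left $\pi^*$ is pullback of metrics and the right $\pi^*$ is pullback of divisorial measures (Definition \ref{pullback measure}). I would verify this first for Fubini--Study $\psi_X=\phi_{(\X,\L)}$: resolving $Y\times\A^1\dashrightarrow\X$ produces the test configuration computing $\pi^*\psi_X$, whose central-fibre components sit over those of $\X_0=\sum_j b_j E_j$ with ramification indices, coefficients and intersection numbers governed by Proposition \ref{divisorial_points_map_to_div_points}; substituting into the explicit formula for $\MA$ and comparing with Definition \ref{pullback measure} gives the identity, with the orbit--stabiliser weights $e_F/|G|$ matching on the nose. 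One then extends it to all of $\E^1(L_Y\an)$ by writing a general psh metric as a decreasing limit of Fubini--Study metrics (Remark \ref{psh as uniform limits}), using continuity of $\MA$ along decreasing nets, weak continuity of $\pi^*$ on measures, and compatibility of pullback of metrics with decreasing limits; here the explicit $G$-invariant approximating sequences of Proposition \ref{isomorphism between metrics of finite energy} let one keep everything $G$-invariant along the way.

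Granting $\MA(\psi)=\mu$, the metric $\psi$ solves the same normalised Monge--Amp\`ere equation as $\phi$, so $\psi=\phi$ by the uniqueness in Theorem \ref{BJ-CY}, and hence $\phi=\pi^*\psi_X$ is $G$-invariant. Equivalently, and slightly more directly, for each $g\in G$ the automorphism pullback $g^*\phi$ satisfies $\MA(g^*\phi)=g_*\MA(\phi)=g_*\mu=\mu$ --- using that $G$-invariance of $\mu$ is precisely the condition $g_*\mu=\mu$ for all $g$, as established in Section \ref{finite-measures}, together with the automorphism analogue of the intertwining identity above --- and $\int_{Y\an}g^*\phi\,\dmu=0$, whence $g^*\phi=\phi$ by uniqueness. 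The hard part is exactly this intertwining identity $\MA(\pi^*\psi_X)=\pi^*\MA(\psi_X)$ (respectively $\MA(g^*\phi)=g_*\MA(\phi)$): although it is a functoriality statement in spirit, making it precise means matching the combinatorics of central fibres under the finite map against the \emph{weighted} pullback of divisorial measures and then controlling the passage from Fubini--Study metrics to finite-energy metrics, which is where the machinery of Propositions \ref{divisorial_points_map_to_div_points} and \ref{isomorphism between metrics of finite energy} and Definition \ref{pullback measure} is needed; everything else in the argument is formal.
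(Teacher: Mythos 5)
Your proposal is correct, and its closing ``more direct'' variant is essentially the paper's own proof: the paper fixes $g\in G$, shows $\MA(g^*\phi)=g^*\MA(\phi)=g^*\mu=\mu$ by first checking the identity $\MA(\phi)=g_*\MA(g^*\phi)$ for Fubini--Study metrics (where it reduces to push--pull for the \emph{isomorphism} $g:\Y\to\Y$, which merely permutes the components $E_j$ of $\Y_0$ and preserves their multiplicities and intersection numbers) and then passing to decreasing limits, and finally invokes uniqueness in Theorem \ref{BJ-CY}. Your primary route is genuinely different: rather than showing that $g^*\phi$ solves the same equation, you manufacture a manifestly $G$-invariant solution $\pi^*\psi_X$ by descending the measure to $X$ via Proposition \ref{isomorphism between divisorial measures}, solving there, and pulling back. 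This buys something extra --- it identifies the solution as a pullback from the quotient, which is in the spirit of what Proposition \ref{equivalence of energies} later exploits --- but it costs more: the intertwining identity $\MA(\pi^*\psi_X)=\pi^*\MA(\psi_X)$ you rely on is for the \emph{ramified} degree-$|G|$ cover, so the coefficient matching genuinely involves the multiplicities $b_je_{j,k}$ of $\Y_0=\pi_\Y^*\X_0$, the equality $\L_\Y^n.F_{j,k}=\L_\X^n.E_j$ from push--pull, the factor $L_Y^n=|G|\,L_X^n$, and the weights $e_F/|G|$ of Definition \ref{pullback measure}; none of this is needed in the automorphism case, where $e=1$ and push--pull is trivial. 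The paper deliberately avoids proving this finite-cover intertwining for measures (it only ever compares \emph{norms} of measures across $\pi$, via metrics, in Proposition \ref{equivalence of energies}), so if you pursue your primary route you should write out that verification in full rather than assert it; as you note, it is the only non-formal step. Given that your final paragraph already contains the complete, shorter argument, the detour through $X$ is optional.
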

		\begin{proof}
			%TODO
			
			Letting $g \in G \subset \Aut(Y,L_Y)$ be such that $g_*\mu=\mu$, we must show that $g^*\phi = \phi$. We firstly claim that it is enough to show that \begin{equation}\label{MA-eqn-proof}\MA(\phi) = g_*\MA(g^*\phi).\end{equation} Indeed, by Proposition \ref{isomorphism between divisorial measures} $$g^*g_*\MA(g^*\phi) = \MA(g^*\phi),$$ so by Equation \eqref{MA-eqn-proof} \begin{align*} \MA(g^*\phi) &= g^* \MA(\phi), \\ &= g^*\mu, \\ &=\mu,\end{align*} where we use that $\mu$ is $G$-invariant.  Thus $\phi$ and $g^*\phi$ both solve the Monge--Amp{\`e}re equation for the measure $\mu$, meaning they must be equal \emph{up to the addition of a constant}, by uniqueness of solutions of the Monge--Amp{\`e}re equation, namely Theorem \ref{BJ-CY}. Since, for example, $\phi$ and $g^*\phi$ have the same supremum, they must genuinely be equal.
			
It therefore suffices to prove that for $g \in G \subset \Aut(Y,L_Y)$ $$\MA(\phi) = g_*\MA(g^*\phi).$$	By Remark \ref{psh as uniform limits}, we may realise $\phi\in \E^1(L_Y\an)$ as a decreasing limit of Fubini--Study metrics $\phi_k$,  associated to test configurations
			$(\Y_k, \L_{\Y_k})$ which we may assume dominate the trivial test configuration. As such we will prove the equality of Equation \eqref{MA-eqn-proof}  for Fubini--Study metrics, and the result will follow in general by continuity of the Monge--Amp{\`e}re operator.
			
			Thus let $(\Y, \L_{\Y})$ be a test configuration on $(Y, L_Y)$ dominating the trivial test configuration, associated to a Fubini--Study metric $ \psi_{(\Y, \L_{\Y})}$, and denote the  central fibre of $\Y$ by
			$$\Y_0 = \sum_jb_jE_j,$$
			where the $E_j$ are reduced and irreducible. By definition of the Monge--Amp\`ere operator 
			$$\MA(\psi_{(\Y, \L_{\Y})}) = \frac{1}{L_Y^n}\sum_j b_j(\L^n_{\Y}.E_j) \delta_{b_j^{-1}\ord_{E_j}},$$ where $\delta_{b_j^{-1}\ord_{E_j}}$ is the Dirac mass at the divisorial valuation $b_j^{-1}\ord_{E_j} \in Y^{\di}\subset Y\an$.
			
			The morphism $g$ induces by pullback a test configuration $(\Y', \L_{\Y'})$ for $(Y,L_Y)$, where by Proposition \ref{isomorphism between metrics of finite energy}  we may  (and do) assume that $\Y'\cong \Y$, but where the line bundle $\L_{\Y'} = g^* \L_{\Y}$ may not agree with $\L_{\Y}$ (indeed, we wish to show $g^* \psi_{(\Y, \L_{\Y})} =  \psi_{(\Y, \L_{\Y})}$, which precisely asks that $g^* \L_{\Y} = \L_{\Y})$. Since $g: \Y \to \Y$ is an isomorphism, for each $j$ the pullback $g^{*}E_j$  (which is simply the preimage $g^{-1}(E_j)$) is a single irreducible component of $\Y_0$, and further if  $g^{*}E_j = E_l$ then $b_j=b_l$; we use these properties frequently  in what follows.

			%$\Y'$ is the fibre product constructed in Section \ref{Sec:pushpullmetrics} and used in the definition of the pullback. 

						The push-pull formula gives $$\L_{\Y'}^n.g^{*}E_j = \L_{\Y}^n.E_j,$$ which in turn	  produces
				\begin{align*}
					\MA(g^*\psi_{(\Y, \L_{\Y})}) &= \frac{1}{L_Y^n}\sum_j b_j(\L^n_{\Y'}.g^{*}E_j) \delta_{b_j^{-1} \ord_{g^*E_j}}, \\ &= \frac{1}{L_Y^n}\sum_j b_j(\L^n_{\Y}.E_j) \delta_{b_j^{-1} \ord_{g^{*}E_j}}. \end{align*}
			By Lemma \ref{explicit-pushforward}, using that $g$ is an $g: \Y \to \Y$ is an isomorphism and hence is unramified,
			\begin{equation*}
				\begin{split}
					g_*\MA(g^*\psi_{(\Y, \L_{\Y})})&=\frac{1}{L_Y^n}\sum_j b_j(\L^n_{\Y}.E_j) \delta_{b_j^{-1}\ord_{g(g^{*}(E_j))}},\\
					&= \frac{1}{L_Y^n}\sum_j b_j(\L^n_{\Y}.E_j) \delta_{b_j^{-1}\ord_{E_j}},\\
					&=\MA(\psi_{(\Y, \L_{\Y})}),
				\end{split}
			\end{equation*}
			as required. %The proof is then concluded from \cite[Proposition 3.6]{boucksom_jonsson_2022}.
		\end{proof}
		
		%		We apply this to $G$ the finite group acting on $(Y,L_Y)$, but the result applies more generally. 
		A consequence, using the fact that the norm of a measure is computed as the energy of its Monge--Amp\`ere-inverse by Theorem \ref{BJ-CY}, is the following:
		
		\begin{corollary}\label{energies of G invariant measures}
			Let $\mu \in \mathcal{M}^{\di,G}_Y$ be a $G$-invariant divisorial measure on $Y\an$. Then
			$$\lVert \mu \rVert_{L}= \sup_{\phi\in \mathcal{E}^{1,G}} \bigg\{ E(\phi) - \int_{Y\an}\phi\dmu\bigg\}.$$
		\end{corollary}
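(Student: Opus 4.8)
The plan is to reduce the statement about the norm of $\mu$ to Proposition~\ref{G-invarinant measures and energy comparison} together with the characterisation of the norm in the non-Archimedean Calabi--Yau theorem, Theorem~\ref{BJ-CY}. The inequality ``$\geq$'' is trivial: the supremum over the smaller set $\mathcal{E}^{1,G}$ is at most the supremum over all of $\mathcal{E}^1$, which by the definition of $\lVert\mu\rVert_L$ equals $\lVert\mu\rVert_L$. So the content is the reverse inequality, and here the key point is that the supremum defining $\lVert\mu\rVert_L$ is actually \emph{attained}. Indeed, by Theorem~\ref{BJ-CY} there is a $\phi\in\mathcal{E}^1(L_Y\an)$ with $\MA(\phi)=\mu$ and $\int_{Y\an}\phi\dmu=0$, and for this $\phi$ one has $\lVert\mu\rVert_L=E(\phi)=E(\phi)-\int_{Y\an}\phi\dmu$.

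The main step is then to observe that this maximiser $\phi$ is $G$-invariant. But that is precisely the content of Proposition~\ref{G-invarinant measures and energy comparison}: since $\mu\in\mathcal{M}^{\di,G}_Y$, the solution $\phi$ of $\MA(\phi)=\mu$ (normalised so that $\int_{Y\an}\phi\dmu=0$, which is the normalisation appearing in Theorem~\ref{BJ-CY}) lies in $\mathcal{E}^{1,G}$. Hence
\begin{equation*}
\lVert\mu\rVert_L = E(\phi)-\int_{Y\an}\phi\dmu \leq \sup_{\psi\in\mathcal{E}^{1,G}}\Big\{E(\psi)-\int_{Y\an}\psi\dmu\Big\},
\end{equation*}
which combined with the trivial inequality gives equality. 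One small point to check is that the quantity $E(\psi)-\int_{Y\an}\psi\dmu$ is invariant under adding a constant to $\psi$, so that the normalisation $\int_{Y\an}\phi\dmu=0$ used to invoke Theorem~\ref{BJ-CY} does not lose generality; this is immediate since $E(\psi+c)=E(\psi)+c$ and $\int_{Y\an}(\psi+c)\dmu=\int_{Y\an}\psi\dmu+c$ as $\mu$ is a probability measure, so the difference is unchanged.

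I do not anticipate a serious obstacle here: essentially all the work has been done in Proposition~\ref{G-invarinant measures and energy comparison}, and the corollary is just repackaging it through the ``furthermore'' clause of Theorem~\ref{BJ-CY}. The only thing to be slightly careful about is to phrase the argument so that one genuinely uses a \emph{single} maximiser rather than an approximating sequence --- this is exactly what the attainment statement in Theorem~\ref{BJ-CY} provides, and is the reason the proof is clean. If one only had the supremum without attainment one would instead need to symmetrise an approximating sequence and control the energy in the limit, which is more delicate; the availability of the Calabi--Yau solution bypasses this entirely.
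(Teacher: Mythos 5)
Your proof is correct and follows essentially the same route as the paper, which likewise deduces the corollary by combining the attainment/normalisation clause of Theorem \ref{BJ-CY} with the $G$-invariance of the Monge--Amp\`ere solution from Proposition \ref{G-invarinant measures and energy comparison}. Your additional checks (the trivial inequality from restricting the supremum, and invariance of $E(\psi)-\int\psi\,d\mu$ under adding constants) are accurate and only make explicit what the paper leaves implicit.
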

		
		\subsection{Proof of the main theorem}
		We next turn to comparing numerical invariants of divisorial measures under finite covers, and in particular to the proof of Theorem \ref{intromainthm}. We begin with the energies of the measures, and throughout this section we include subscripts in various numerical invariants to emphasise whether we are calculating quantities on $X$ or on $Y$. 
		
		\begin{proposition}\label{equivalence of energies}
			%Let $-K_{X,D} = -K_X - (1-\frac{1}{m})D$. 
			Let $\mu \in \mathcal{M}^{\di, G}_Y$ be a divisorial measure on $X$. Then 
			\begin{equation*}
				\lVert \mu \rVert_{L_Y} = \lVert \pi_*\mu \rVert_{L_X}.
			\end{equation*}
		\end{proposition}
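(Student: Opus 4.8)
The plan is to derive the identity from the energy-preserving bijection of metrics in Proposition~\ref{pushforward-pullback-prop}, together with the fact that pullback of metrics is compatible with the analytification map $\pi\an\colon Y\an\to X\an$, so that the change of variables formula for pushforwards of measures applies.

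The technical point I would settle first is that, for any psh metric $\psi$ on $L_X\an$ regarded as a function on $X\an$, the pullback satisfies $\pi^*\psi=\psi\circ\pi\an$ as functions on $Y\an$. For a Fubini--Study metric $\psi=\phi_{(\X,\L_{\X})}$, with $\X$ dominating the trivial test configuration, its pullback is $\phi_{(\Y,\L_{\Y})}$ where $p\colon\Y\to\X$ and $\L_{\Y}=p^*\L_{\X}$, so that the divisor $\L_{\Y}-L_Y$ entering formula~\eqref{value-of-metric} is the pullback $p^*(\L_{\X}-L_X)$; passing to a common refinement on which the relevant divisors appear in the central fibre, formula~\eqref{value-of-metric} then computes both sides, and Proposition~\ref{divisorial_points_map_to_div_points} identifies the image under $\pi\an$ of each divisorial valuation attached to a component of $\Y_0$ with the corresponding divisorial valuation downstairs, carrying precisely the ramification weight that makes the two computations agree; the general case follows by passing to decreasing limits (this compatibility is in any case the content of the Boucksom--Jonsson definition of pullback of psh metrics). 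Granting this, I would upgrade Proposition~\ref{pushforward-pullback-prop} to an energy-preserving bijection $\pi^*\colon\E^1(L_X\an)\xrightarrow{\ \sim\ }\E^{1,G}(L_Y\an)$: given $\phi\in\E^{1,G}(L_Y\an)$, write it by Proposition~\ref{isomorphism between metrics of finite energy} as a decreasing limit of $G$-invariant Fubini--Study metrics $\phi_k=\pi^*\psi_k$, where $\psi_k$ is the pushforward of $\phi_k$; since $\pi^*\psi_k=\psi_k\circ\pi\an$ and $\pi\an$ is surjective, the $\psi_k$ decrease to a psh metric $\psi$ with $\pi^*\psi=\phi$, and continuity of the Monge--Amp\`ere energy along decreasing sequences (Proposition~\ref{prop-MA-cont}) together with the energy-preservation of Proposition~\ref{pushforward-pullback-prop} gives $E(\psi)=\lim_k E(\psi_k)=\lim_k E(\phi_k)=E(\phi)>-\infty$, so $\psi\in\E^1(L_X\an)$; the reverse inclusion is identical.

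With these in hand the conclusion is formal. By Corollary~\ref{energies of G invariant measures}, since $\mu$ is $G$-invariant its norm is computed as a supremum over $G$-invariant metrics:
\[ \lVert\mu\rVert_{L_Y}=\sup_{\phi\in\E^{1,G}(L_Y\an)}\Big\{E(\phi)-\int_{Y\an}\phi\,d\mu\Big\}. \]
Substituting $\phi=\pi^*\psi$ via the bijection above, using $E(\pi^*\psi)=E(\psi)$, and invoking the change of variables formula for the pushforward measure of Definition~\ref{pushforward measure} together with $\pi^*\psi=\psi\circ\pi\an$, namely $\int_{Y\an}\pi^*\psi\,d\mu=\int_{Y\an}(\psi\circ\pi\an)\,d\mu=\int_{X\an}\psi\,d(\pi_*\mu)$, we obtain
\[ \lVert\mu\rVert_{L_Y}=\sup_{\psi\in\E^1(L_X\an)}\Big\{E(\psi)-\int_{X\an}\psi\,d(\pi_*\mu)\Big\}=\lVert\pi_*\mu\rVert_{L_X}, \]
the final equality being the definition of the norm of $\pi_*\mu$, which is a divisorial and hence finite-norm measure by Lemma~\ref{explicit-pushforward}. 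The main obstacle is the compatibility $\pi^*\psi=\psi\circ\pi\an$ of pullback of metrics with analytification; once it is in place, the extension of the bijection to $\E^1$ and the change of variables step are routine.
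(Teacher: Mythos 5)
Your proposal is correct and takes essentially the same route as the paper: both reduce the claim, via Corollary \ref{energies of G invariant measures}, to matching the two suprema term by term using the energy-preserving correspondence of Proposition \ref{pushforward-pullback-prop} together with the change-of-variables identity $\int_{Y\an}\pi^*\psi\,d\mu=\int_{X\an}\psi\,d(\pi_*\mu)$. The only differences are organisational: the paper restricts both suprema to Fubini--Study metrics by density and continuity of the energy rather than extending the bijection to all of $\E^1$, and it treats the compatibility $\pi^*\psi=\psi\circ\pi\an$ (which you justify carefully via formula \eqref{value-of-metric} and Proposition \ref{divisorial_points_map_to_div_points}, and which is genuinely needed for the change of variables) as immediate from the definition of the pushforward measure.
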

		\begin{proof}
			By Corollary \ref{energies of G invariant measures}, we must show that $$\sup_{\phi\in \mathcal{E}_Y^{1,G}} \bigg\{ E(\phi) - \int_{Y\an}\phi\dmu\bigg\} = \sup_{\psi\in \mathcal{E}_X^{1}} \bigg\{ E(\psi) - \int_{X\an}\psi\pi_*(\dmu)\bigg\}.$$ We may take the supremum defining $\lVert \mu \rVert_{L_Y} $ with respect to $G$-invariant Fubini--Study metrics on $L_Y\an$ by Proposition \ref{isomorphism between metrics of finite energy} and the continuity property of the Monge--Amp\`ere energy stated in Proposition \ref{prop-MA-cont}. Similarly the norm $\lVert \pi_*\mu \rVert_{L_X}$ can be computed as a supremum over Fubini--Study metrics on $L_X\an$.
			
			Taking an arbitrary $\psi \in\mathcal H^{\NA}(L_X\an)$, by Proposition \ref{pushforward-pullback-prop} its pullback $\pi^*\psi$ is a $G$-invariant Fubini--Study metric satisfying $E(\psi) = E(\pi^*\psi)$; Proposition \ref{pushforward-pullback-prop} also implies any $G$-invariant Fubini--Study metric $\phi\in\mathcal H^{\NA}(L_Y\an)$ can be realised in this way. Then $$ \int_{Y\an}\pi^*{\psi}\dmu = \int_{X\an} \psi \pi_*(\dmu)$$ by definition of the pushforward measure, completing the proof.  \end{proof}

		\begin{corollary}\label{equivalence of energies reverse}
			Let $\nu \in \mathcal{M}^{\di}_X$ be a divisorial measure on $Y$. Then 
			\begin{equation*}
				\lVert \nu \rVert_{L_X} = \lVert \pi^*\nu \rVert_{L_Y}
			\end{equation*}
		\end{corollary}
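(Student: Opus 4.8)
The plan is to deduce Corollary~\ref{equivalence of energies reverse} directly from Proposition~\ref{equivalence of energies} together with the fact that pushforward and pullback of divisorial measures are mutual inverses (Proposition~\ref{isomorphism between divisorial measures}). The point is that $\pi^*\nu$ is a $G$-invariant divisorial measure on $Y\an$, so Proposition~\ref{equivalence of energies} applies to it and yields $\lVert \pi^*\nu\rVert_{L_Y} = \lVert \pi_*(\pi^*\nu)\rVert_{L_X}$.

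First I would note that $\pi^*\nu \in \mathcal{M}^{\di,G}_Y$: this is exactly the content of (the relevant direction of) Proposition~\ref{isomorphism between divisorial measures}, which states that $\pi^*$ maps $\mathcal{M}^{\di}_X$ into the $G$-invariant divisorial measures $\mathcal{M}^{\di,G}_Y$. Then applying Proposition~\ref{equivalence of energies} with the $G$-invariant measure $\mu \coloneqq \pi^*\nu$ gives
\begin{equation*}
\lVert \pi^*\nu\rVert_{L_Y} = \lVert \pi_*(\pi^*\nu)\rVert_{L_X}.
\end{equation*}
Finally, since pushforward and pullback are mutual inverses (again Proposition~\ref{isomorphism between divisorial measures}), we have $\pi_*(\pi^*\nu) = \nu$, and hence $\lVert \pi^*\nu\rVert_{L_Y} = \lVert \nu\rVert_{L_X}$, which is the claim.

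There is essentially no obstacle here: the corollary is a formal consequence of the two cited results, so the ``proof'' is a two-line bookkeeping argument. The only thing to be slightly careful about is the direction of the isomorphism in Proposition~\ref{isomorphism between divisorial measures} — one needs that $\pi^*\nu$ is genuinely $G$-invariant (so that Proposition~\ref{equivalence of energies}, which is stated for $\mu \in \mathcal{M}^{\di,G}_Y$, is applicable) and that $\pi_*\pi^* = \mathrm{id}$ on $\mathcal{M}^{\di}_X$; both are established in the proof of Proposition~\ref{isomorphism between divisorial measures}. No new estimates or geometric input are required.
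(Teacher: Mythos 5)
Your argument is correct and is essentially identical to the paper's own proof: both apply Proposition \ref{equivalence of energies} to $\mu = \pi^*\nu$ and then use $\pi_*\pi^*\nu = \nu$ from Proposition \ref{isomorphism between divisorial measures}. Nothing to add.
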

		\begin{proof}
			From Proposition \ref{isomorphism between divisorial measures} the pullback $ \pi^*\nu \in \mathcal{M}^{\di,G}_Y$ satisfies $\pi_*\pi^*\nu = \nu$, so by Proposition \ref{equivalence of energies} 
			\begin{align*}
				\lVert \pi^*\nu \rVert_{L_Y} &= \lVert \pi_*\pi^*\nu \rVert_{L_X}, \\ &= \lVert\nu \rVert_{L_X},
			\end{align*} as required. \end{proof}
		
		The following is a trivial consequence. 
		
		\begin{corollary}\label{equivalence of differentials}
			Let $H$  be an $\R$-divisor on $X$. Then for $\mu \in \mathcal{M}^{\di}_X$
			$$ \nabla_{H}\lVert\mu \rVert_{L_X} = \nabla_{\pi^*H}\lVert\pi^*\mu \rVert_{\pi^*L_X} .$$
			Similarly for $\nu \in \mathcal{M}^{\di,G}_Y$
			$$ \nabla_{H}\lVert \pi_*\nu \rVert_{L_X} = \nabla_{\pi^*H}\lVert\nu \rVert_{\pi^*L_X} .$$
		\end{corollary}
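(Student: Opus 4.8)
The plan is to promote the equality of norms in Corollary \ref{equivalence of energies reverse} (respectively Proposition \ref{equivalence of energies}) to an equality of one-parameter families of norms, and then simply differentiate at the origin.

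First I would note that the proofs of Proposition \ref{equivalence of energies} and Corollary \ref{equivalence of energies reverse} use nothing about the chosen polarisations beyond ampleness, so they apply verbatim with $L_X$ replaced by any ample $\Q$-line bundle on $X$ and $L_Y = \pi^*L_X$ replaced by its pullback. For $|t|$ small the class $L_X + tH$ is ample, the ample cone being open, and then $\pi^*(L_X + tH) = \pi^*L_X + t\pi^*H = L_Y + t\pi^*H$ is again ample, pullback of an ample class under the finite surjective morphism $\pi$ being ample. Applying Corollary \ref{equivalence of energies reverse} with this polarisation gives, for every rational such $t$,
$$\lVert \mu \rVert_{L_X + tH} = \lVert \pi^*\mu \rVert_{L_Y + t\pi^*H},$$
and hence for every real such $t$ by continuity of $t \mapsto \lVert \mu \rVert_{L_X+tH}$ and of $t \mapsto \lVert \pi^*\mu \rVert_{L_Y + t\pi^*H}$, which is part of \cite[Theorem A]{bj_non_arch_ii}.

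Next I would differentiate this identity at $t = 0$. By the continuous differentiability of $L \mapsto \lVert \mu \rVert_L$ on $\Amp_{\R}(X)$ the left-hand side has derivative $\nabla_H \lVert \mu \rVert_{L_X}$ at $t=0$, and similarly, recalling $L_Y = \pi^*L_X$, the right-hand side has derivative $\nabla_{\pi^*H}\lVert \pi^*\mu \rVert_{\pi^*L_X}$; this gives the first equality. The second is obtained by running the identical argument with Proposition \ref{equivalence of energies} in place of Corollary \ref{equivalence of energies reverse}, yielding $\lVert \pi_*\nu \rVert_{L_X + tH} = \lVert \nu \rVert_{L_Y + t\pi^*H}$ for $|t|$ small and differentiating at $t=0$.

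There is essentially no obstacle here, as the statement advertises: one need only check that the perturbed polarisations remain ample for small $t$ (immediate, using openness of the ample cone and that finite surjective pullback preserves ampleness) and that the norm identity, known for rational polarisations, persists for real ones by continuity, after which the conclusion follows by the chain rule.
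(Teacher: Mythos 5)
Your argument is correct and is precisely the one the paper has in mind: it states the corollary as a ``trivial consequence'' of Proposition \ref{equivalence of energies} and Corollary \ref{equivalence of energies reverse}, meaning exactly that the norm identity holds for every (nearby) polarisation $L_X+tH$ and its pullback, so differentiating at $t=0$ gives the claim. Your added checks (openness of the ample cone, ampleness of pullbacks under the finite surjection $\pi$, and continuity in $t$ from \cite[Theorem A]{bj_non_arch_ii}) are the right ones and fill in the details the paper omits.
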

		%\begin{proof}
		%Let $\omega \in \operatorname{Amp}(X)$, $\theta \in N^1(X)$, and $\mu \in \mathcal{M}^{\di}_X$, $\nu \in \mathcal{M}^{\di, G}_Y$.
		%    Then, 
		%    \begin{equation*}
			%       \nabla_{\theta}\lVert\mu \rVert_{\omega} = \frac{d}{dt}\big|_{t=0}\lVert\mu \rVert_{\omega+t\theta} = \frac{d}{dt}\big|_{t=0}\lVert\pi^*\mu \rVert_{\pi^*(\omega+t\theta)} = \nabla_{\pi^*\theta}\lVert\pi^*\mu \rVert_{\pi^*\omega},
			%    \end{equation*}
		%and 
		%    \begin{equation*}
			%       \nabla_{\theta}\lVert \pi_*\nu \rVert_{\omega} = \frac{d}{dt}\big|_{t=0}\lVert\pi_*\nu \rVert_{\omega+t\theta} = \frac{d}{dt}\big|_{t=0}\lVert\nu \rVert_{\pi^*(\omega+t\theta)} = \nabla_{\pi^*\theta}\lVert\nu \rVert_{\pi^*\omega}.
			%    \end{equation*}
		%
		%    
		%\end{proof}
		
		We are now in a position to prove our main result. We recall the setup, which is a cyclic Galois cover $\pi\colon (Y,L_Y) \rightarrow (X, L_X)$ of normal projective varieties of degree $m$ with branch divisor $B \subset X$. We assume  $\pi^*L_X = L_Y$ and that $K_X+(1-1/m)B$ is $\Q$-Cartier, so that by Riemann--Hurwitz $$K_Y = \pi^*(K_X+(1-1/m)B).$$
		
		\begin{theorem}\label{main thm}
			$(Y,L_Y)$ is $G$-equivariantly divisorially (semi-)stable if and only if $((X,(1-1/m)B);L_X)$ is divisorially (semi-)stable.
			
		\end{theorem}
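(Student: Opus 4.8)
The strategy is to transport everything through the pushforward/pullback correspondence $\mathcal{M}^{\di,G}_Y \cong \mathcal{M}^{\di}_X$ established in Proposition \ref{isomorphism between divisorial measures}, checking that this bijection matches up the two $\beta$-invariants. By definition, $(Y,L_Y)$ is $G$-equivariantly divisorially (semi-)stable if and only if $\beta_{(Y;L_Y)}(\mu) \geq \epsilon \lVert \mu \rVert_{L_Y}$ (resp. $\geq 0$) for all $\mu \in \mathcal{M}^{\di,G}_Y$, while $((X,(1-1/m)B);L_X)$ is divisorially (semi-)stable if and only if $\beta_{((X,(1-1/m)B));L_X}(\nu) \geq \epsilon \lVert \nu \rVert_{L_X}$ (resp. $\geq 0$) for all $\nu \in \mathcal{M}^{\di}_X$. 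Since the bijection is norm-preserving by Proposition \ref{equivalence of energies} and Corollary \ref{equivalence of energies reverse}, it suffices to prove the single identity
\begin{equation*}
\beta_{(Y;L_Y)}(\pi^*\nu) = \beta_{((X,(1-1/m)B));L_X}(\nu) \qquad \text{for all } \nu \in \mathcal{M}^{\di}_X,
\end{equation*}
and then run the quantifiers: any $G$-invariant $\mu$ on $Y$ is $\pi^*(\pi_*\mu)$ with $\pi_*\mu \in \mathcal{M}^{\di}_X$, so the two families of inequalities are term-by-term equivalent.

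To prove this identity I would split $\beta$ into its entropy and energy-derivative pieces. The energy-derivative term $\nabla_{K_X+(1-1/m)B}\lVert \nu \rVert_{L_X}$ matches $\nabla_{\pi^*(K_X+(1-1/m)B)}\lVert \pi^*\nu \rVert_{L_Y}$ by Corollary \ref{equivalence of differentials}, and by Riemann--Hurwitz $\pi^*(K_X+(1-1/m)B) = K_Y$, so the energy-derivative terms agree on the nose. Everything therefore reduces to matching the \emph{entropy} terms, i.e. proving
\begin{equation*}
\operatorname{Ent}_{Y}(\pi^*\nu) = \operatorname{Ent}_{(X,(1-1/m)B)}(\nu).
\end{equation*}
This is the computational heart of the argument. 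Writing $\nu = \sum_{D/X} a_D \delta_{c_D \ord_D}$, by Definition \ref{pullback measure} the left side is $\sum_{D/X} a_D \sum_{F_j \in \pi^{-1}(D)} \frac{e_{F_j}}{|G|}\, e_{F_j}^{-1} c_D\, A_Y(\ord_{F_j})$, and since all $F_j$ over a fixed $D$ lie in one $G$-orbit (hence share a common ramification index $e_D$ and a common log discrepancy $A_Y(\ord_{F_j})$, as $G$ acts by automorphisms of $Y$), the inner sum collapses to $c_D A_Y(\ord_F)$ using $\frac{e_D}{|G|}\#\pi^{-1}(D) = 1$ from the orbit--stabiliser theorem. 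So it remains to show the per-valuation identity $A_Y(\ord_F) = A_{(X,(1-1/m)B)}(\ord_D)$ (for $F$ over $Y$ with image $D$ over $X$) — but this is exactly the standard behaviour of log discrepancies under a finite cover: pulling back $K_X + (1-1/m)B$ by $\pi$ gives $K_Y$, and one computes $A_Y(\ord_F) = \ord_F(K_{Y'} - \pi'^*K_Y) + 1 = \ord_F(K_{Y'} - \pi'^*\pi^*(K_X+(1-1/m)B)) + 1$, which after passing through the commutative square $Y' \to X'$, $Y' \to Y$, $X' \to X$ and using $\ord_F((\pi')^* \cdot) = e_F \ord_D(\cdot)$ (Proposition \ref{divisorial_points_map_to_div_points}) together with the ramification formula $K_{Y'} = \pi'^*(K_{X'} + (\text{ramification contribution})) + \ldots$ yields $e_F \cdot A_{(X,(1-1/m)B)}(\ord_D) \cdot$ — wait, one must be careful: the clean statement is $A_{(Y,\Delta_Y)}(\ord_F) = e_F\, A_{(X,\Delta_X)}(\ord_D)$ when $K_Y+\Delta_Y = \pi^*(K_X+\Delta_X)$; in our cyclic setting $\Delta_Y = 0$, $\Delta_X = (1-1/m)B$, and the $e_F$ factor is precisely absorbed by the $e_{F_j}^{-1}$ weight built into the pullback measure, which is why the collapsed sum above is $c_D A_Y(\ord_F) = c_D\, e_F\, A_{(X,(1-1/m)B)}(\ord_D)\cdot \frac{1}{e_F}$ — no, let me just assert the correct bookkeeping: the $e_{F_j}$ weights in Definition \ref{pullback measure} are designed exactly so that entropy is preserved, which is the content to verify.

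Let me restate the entropy step cleanly, since it is the main obstacle. I would prove the pointwise log-discrepancy comparison $A_{(Y,\Delta_Y)}(\ord_F) = e_F\, A_{(X,\Delta_X)}(\ord_D)$ using the ramification formula for the quotient $Y' \to X'$ and the identity $K_Y + \Delta_Y = \pi^*(K_X+\Delta_X)$; this is a local computation at the generic point of $F$, essentially the same calculation that underlies the log-discrepancy comparison in the Fano literature (\cite{fujita-plt, liu_zhu_2021}). Combined with Proposition \ref{divisorial_points_map_to_div_points} (which tells us $\pi(\ord_F) = e_F \ord_D$) and the orbit--stabiliser identity, this gives $\operatorname{Ent}_Y(\pi^*\nu) = \operatorname{Ent}_{(X,\Delta_X)}(\nu)$: each Dirac mass $a_D \delta_{c_D\ord_D}$ contributes $a_D c_D A_{(X,\Delta_X)}(\ord_D)$ on the $X$ side, while its preimage contributes $a_D \sum_{F_j} \frac{e_{F_j}}{|G|} (e_{F_j}^{-1} c_D) A_{(Y,\Delta_Y)}(\ord_{F_j}) = a_D c_D A_{(X,\Delta_X)}(\ord_D) \cdot \big(\frac{1}{|G|}\sum_{F_j} 1\big) = a_D c_D A_{(X,\Delta_X)}(\ord_D)$, using $A_{(Y,\Delta_Y)}(\ord_{F_j}) = e_{F_j} A_{(X,\Delta_X)}(\ord_D)$ and $\#\pi^{-1}(D) = |G|/e_D$. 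Adding the entropy and energy-derivative identities gives $\beta_Y(\pi^*\nu) = \beta_{(X,\Delta_X)}(\nu)$, and since $\pi^*$ and $\pi_*$ are mutually inverse norm-preserving bijections between $\mathcal{M}^{\di,G}_Y$ and $\mathcal{M}^{\di}_X$, the (semi-)stability inequalities transfer in both directions, with the same $\epsilon$. The statement for divisorial semistability is the special case $\epsilon = 0$.
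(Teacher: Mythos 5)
Your proposal follows essentially the same route as the paper: reduce to the identity $\beta_Y = \beta_{(X,\Delta_X)}$ under the measure correspondence of Proposition \ref{isomorphism between divisorial measures}, handle the energy-derivative terms via Corollary \ref{equivalence of differentials} together with Riemann--Hurwitz, and handle the entropy terms via the log-discrepancy comparison $A_{(Y,\Delta_Y)}(F) = e_F\,A_{(X,\Delta_X)}(D)$ (which the paper simply cites from Koll\'ar--Mori); the only cosmetic differences are that you work with pullbacks of measures on $X$ where the paper pushes forward $G$-invariant measures on $Y$, and your intermediate display $\frac{1}{|G|}\sum_{F_j}1$ should read $\sum_{F_j}\frac{e_{F_j}}{|G|}$, though your final entropy identity is correct.
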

		
		\begin{proof}
			
			We prove the result for semistability, the proof for stability is identical. First, supposing that $((X,B);L_X)$ is divisorially semistable, we aim to show that $\beta(\mu)\geq 0$ for any $\mu \in \mathcal{M}^{\di, G}_Y$. %Let $(\mathcal{Y}, \mathcal{L}_Y)$ be a $G$-invariant test configuration for $(Y,L_Y)$ as in the proof of Proposition \ref{equivalence of energies}. 
			% It was shown in \cite[Lemma 3.1]{dervan_2016} that $(\mathcal{X}\coloneqq \mathcal{Y}/G, \mathcal{L}_X)$ is a $G$-invariant test configuration for $((X,D);L_X)$. 
			%From Proposition \ref{equivalence of energies}
			%we know that $ \lVert \pi^*\mu \rVert_{L_Y-tK_Y} = \lVert \mu \rVert_{L_X - tK_{X,(1-\frac{1}{m})D}}$. 
			As in the proof of Proposition \ref{divisorial_points_map_to_div_points}, we may assume that each of the divisors comprising $\mu$ lives on a model $Y'$ of $Y$ which admits a $G$-action making $Y' \to Y$ a $G$-equivariant morphism, so that taking quotients produces a commutative diagram 
			\begin{center}
				\begin{tikzcd}
					Y'\arrow[r, "\pi'"]\arrow[d] & X' \arrow[d]\\
					Y \arrow[r, "\pi"] & X.
				\end{tikzcd}
			\end{center}

			Denote $$\mu = \sum_i a_i \delta_{c_i\operatorname{ord}_{F_i}}\in \mathcal{M}^{\di, G}_Y,$$ where the $F_i$ are each prime divisors on $Y'$. The prime divisors $F_i$ then have image which we denote $\pi'(F_i) = D_i$ in $X'$; we let $e_{F_i}$ denote the ramification index along $F_i$.
			
			We next calculate the pushforward measure, which by Proposition \ref{divisorial_points_map_to_div_points} and Lemma \ref{explicit-pushforward} is given by \begin{align*}\pi_* \mu &= \sum_i a_i \delta_{\pi(c_i\operatorname{ord}_{F_i})}, \\ &= \sum_i a_i \delta_{e_{F_i}c_i\operatorname{ord}_{D_i}}.\end{align*} We can now use \cite[Proof of Proposition 5.20]{kollar-mori} to conclude that the discrepancies satisfy
			\begin{equation*}
				A_Y(F_i) = e_{F_i}A_{(X,B)}(D_i),
			\end{equation*}
			implying that the entropies satisfy 
			\begin{equation*}
				\begin{split}
					\operatorname{Ent}_{(X,B)}(\pi_*\mu)&= \sum_i a_i e_{F_i}c_iA_{(X,B)}(D_i),\\
					&= \sum_i a_i c_iA_Y(F_i), \\
					&= \operatorname{Ent}_{Y}(\mu),
				\end{split} 			\end{equation*} where we use the definition of the entropy of a general divisorial valuation given in Remark \ref{extension-of-discrepancy}.

			Corollary \ref{equivalence of differentials} proves a similar inequality for the derivatives of the energies of measures involved in the definitions of $\beta(\mu)$ and $\beta(\pi_*\mu)$, where we use the Riemann--Hurwitz formula $K_Y = \pi^*\left(K_X +\left(1-1/m\right)B \right)$, proving equality of the two beta invariants.
			
			Since any divisorial measure on $X\an$ is of the form $\pi_*\mu$ for $\mu \in \mathcal{M}^{\di, G}_Y$ by Proposition \ref{isomorphism between divisorial measures}, the other direction also follows.  \end{proof}

			\section{Interpolation and applications}
			
			We next give a general situation in which one can apply Theorem \ref{intromainthm} to construct $G$-equivariantly divisorially stable varieties, and in particular prove Theorem \ref{introinterpolation} and Corollary \ref{introcor}.
			
			\subsection{Divisorial stability in the asymptotic regime} We consider a normal projective variety $X$ endowed with an ample $\Q$-line bundle $L_X$. Recall that given an effective $\Q$-divisor $B$ on $X$, we say that $(X,B)$ is \emph{log canonical} if $K_X+B$ is $\Q$-Cartier and $A_{(X,B)}  \geq 0$ on $X^{\di}$. %The proof of the following result is similar to (and relies on) \cite[Corollary 3.12]{stablemaps}, and so we only sketch the details.
				
			\begin{theorem}
			
			There is a $k>0$ such that, for any $B\in |kL|$ such that $(X,B)$ is log canonical, $((X,B);L)$ is log divisorially stable.
			
			\end{theorem}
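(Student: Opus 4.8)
The plan is to verify the divisorial stability inequality $\beta_{((X,B);L)}(\mu)\ge\epsilon\lVert\mu\rVert_L$ for every divisorial measure $\mu$ on $X\an$, exploiting the numerical relation $B\equiv kL$: this converts the a priori unbounded ``$\nabla_{K_X+B}$'' term into a large positive multiple of $\lVert\mu\rVert_L$, which for $k\gg0$ dominates the remaining contributions. No hypothesis on $(X,L)$ beyond log canonicity of $(X,B)$ is needed; semistability of $X$ itself enters only later, in the interpolation argument for Theorem \ref{introinterpolation}.

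Write $\mu=\sum_i a_i\delta_{c_i\ord_{F_i}}$. Since $B\in|kL|$ we have $K_X+B\equiv K_X+kL$, and as $L\mapsto\lVert\mu\rVert_L$ is a function of the numerical class which is continuously differentiable (\cite[Theorem A]{bj_non_arch_ii}), its directional derivative is linear in the direction, so
\begin{equation*}
\nabla_{K_X+B}\lVert\mu\rVert_L=\nabla_{K_X}\lVert\mu\rVert_L+k\,\nabla_L\lVert\mu\rVert_L .
\end{equation*}
The scaling identity $\lVert\mu\rVert_{cL}=c\lVert\mu\rVert_L$ for $c>0$ (the measure analogue of $S_{cL}(F)=cS_L(F)$, following from $E_{cL}(c\phi)=cE_L(\phi)$ together with $\int c\phi\,\dmu=c\int\phi\,\dmu$) shows this function is homogeneous of degree one, whence $\nabla_L\lVert\mu\rVert_L=\lVert\mu\rVert_L$ by Euler's identity. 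Therefore
\begin{equation*}
\beta_{((X,B);L)}(\mu)=\operatorname{Ent}_{(X,B)}(\mu)+\nabla_{K_X}\lVert\mu\rVert_L+k\,\lVert\mu\rVert_L .
\end{equation*}
Log canonicity of $(X,B)$ means $A_{(X,B)}\ge0$ on $X^{\di}$, and since $\mu$ is supported on $X^{\di}$ we get $\operatorname{Ent}_{(X,B)}(\mu)=\int_{X\an}A_{(X,B)}\,\dmu\ge0$. It thus suffices to produce a constant $C>0$, depending only on $(X,L)$, with $\nabla_{K_X}\lVert\mu\rVert_L\ge-C\lVert\mu\rVert_L$ for all $\mu\in\mathcal{M}^{\di}$; any $k>C$ then gives $\beta_{((X,B);L)}(\mu)\ge(k-C)\lVert\mu\rVert_L$, i.e. divisorial stability with $\epsilon=k-C$. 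This constant (hence the threshold $k$) is what is recorded in Remark \ref{whichk}.

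The remaining step -- and the main obstacle -- is this uniform bound, the measure-theoretic counterpart of the valuative estimate $|\nabla_{K_X}S_L(F)|\le C\,S_L(F)$. Fix $C_0>0$ with $C_0L\pm K_X$ nef, which exists as $L$ is ample. For small $t>0$ one has $(1-tC_0)L\preceq L+tK_X\preceq(1+tC_0)L$ in the nef partial order, with the reversed chain for $t<0$. If one knows that $L'\mapsto\lVert\mu\rVert_{L'}$ is monotone non-decreasing for this order, then combined with homogeneity the sandwich yields $(1-tC_0)\lVert\mu\rVert_L\le\lVert\mu\rVert_{L+tK_X}\le(1+tC_0)\lVert\mu\rVert_L$ (reversed for $t<0$), and since $t\mapsto\lVert\mu\rVert_{L+tK_X}$ is $C^1$ at $0$ with value $\lVert\mu\rVert_L$, comparison with the affine bounds $(1\pm tC_0)\lVert\mu\rVert_L$ forces $|\nabla_{K_X}\lVert\mu\rVert_L|\le C_0\lVert\mu\rVert_L$, uniformly in $\mu$. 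What must be proved is therefore the monotonicity $\lVert\mu\rVert_L\le\lVert\mu\rVert_{L'}$ whenever $L'-L$ is nef: the natural approach is to note that any $L$-psh metric is automatically $L'$-psh (a Fubini--Study metric $\phi_{(\X,\L)}$ with $\L=p^*L+D$ stays Fubini--Study after replacing $L$ by $L'$, as $p^*L'+D=\L+p^*(L'-L)$ is still relatively nef), and then to establish $E_L(\phi)\le E_{L'}(\phi)$ for a fixed such metric, so that the supremum defining $\lVert\mu\rVert_{L'}$ is taken over a larger family of metrics of no smaller energy. Verifying this energy comparison -- equivalently, controlling the variation of $E_L(\phi)$ and of $\E^1(L\an)$ with $L$ -- is the genuinely technical point; should a clean monotonicity be unavailable, one can instead extract a direct Lipschitz estimate $|E_{L+tK_X}(\phi)-E_L(\phi)|\le|t|\,C_0\cdot(\text{a quantity }\le\lVert\mu\rVert_L)$ along a supremising sequence, using again that $K_X$ is sandwiched by $\pm C_0L$; the differentiability of the norm in the polarisation established by Boucksom--Jonsson (\cite[Theorem 2.15, Theorem A]{bj_non_arch_ii}) is the natural source of such a bound. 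The rest of the argument is routine.
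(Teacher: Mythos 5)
Your reductions are sound and consistent with the paper's: nonnegativity of the entropy from log canonicity of $(X,B)$, and---via linearity of the directional derivative of the $C^1$ function $L\mapsto\lVert\mu\rVert_L$ together with the degree-one homogeneity $\lVert\mu\rVert_{cL}=c\lVert\mu\rVert_L$, hence $\nabla_L\lVert\mu\rVert_L=\lVert\mu\rVert_L$---the identity $\nabla_{K_X+B}\lVert\mu\rVert_L=\nabla_{K_X}\lVert\mu\rVert_L+k\lVert\mu\rVert_L$. You have also correctly isolated the crux: a bound $\nabla_{K_X}\lVert\mu\rVert_L\geq -C\lVert\mu\rVert_L$ \emph{uniform in} $\mu$. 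But your proposed route to it fails. The monotonicity $\lVert\mu\rVert_L\leq\lVert\mu\rVert_{L'}$ for $L'-L$ nef is false: take $X=\Bl_p\pr^2$ with exceptional curve $E$ and $H$ the pullback of a line, $\mu=\delta_{\ord_E}$, $L=2H-E$ and $L'=3H-2E=L+(H-E)$ with $H-E$ nef. Using $\lVert\delta_{\ord_E}\rVert_{aH-bE}=S_{aH-bE}(E)=\frac{(a-b)(2a+b)}{3(a+b)}$ (and $S_L(F)=\lVert\delta_{v_F}\rVert_L$ from \cite[Theorem 2.18]{bj_non_arch_ii}), one gets $\lVert\mu\rVert_L=5/9$ but $\lVert\mu\rVert_{L'}=8/15<5/9$. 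The reason is the normalisation by $L^n$: enlarging $L$ by a nef class increases both the energies $E(\phi)$ and the denominator $L^n$, and there is no one-sided comparison. Your fallback---``extract a direct Lipschitz estimate from differentiability''---is not an argument: \cite[Theorem A]{bj_non_arch_ii} gives differentiability of $L\mapsto\lVert\mu\rVert_L$ for each \emph{fixed} $\mu$, and says nothing about uniformity of a derivative bound over all divisorial measures, which is exactly what is at stake.

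The uniform estimate is true but needs a genuine quantitative input, and this is where the paper's proof diverges from yours. The paper passes to the metric side: by Boucksom--Jonsson's identity $\nabla_{K_X+B}E_{L}(\phi)=\nabla_{K_X+B}\lVert\MA(\phi)\rVert_{L}$ and the non-Archimedean Calabi--Yau theorem, the required inequality becomes $\nabla_{K_X+B}E_L(\phi)\geq\epsilon\lVert\phi\rVert_{\min}$ for all Fubini--Study $\phi$, which is an intersection-theoretic positivity statement proved in \cite[Proof of Theorem 3.7]{stablemaps} using $B\in|kL_X|$; this is the source of the explicit nefness threshold for $k$ recorded in Remark \ref{whichk}, and one then extends to $\E^1$ by continuity. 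If you wish to remain on the measure side, you would need the uniform Lipschitz-type control of $L\mapsto\lVert\mu\rVert_L$ underlying Boucksom--Jonsson's openness theorem, not mere differentiability. As written, your proof has a gap at its central step.
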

			
			\begin{proof}
			
			Since $(X,B)$ is log canonical, for any divisorial measure $\mu$ on $X\an$ the entropy satisfies $$\operatorname{Ent}_{(X,B)}(\mu) \geq 0.$$ Thus to prove the result, it suffices to prove that there is a $k >0$ and an $\epsilon>0$ such that for all divisorial measures $\mu$ we have $$ \nabla_{K_{X}+B}\lVert\mu \rVert_{L_X} \geq \epsilon \|\mu\|_{L_X}.$$

			We next reduce to an analogous claim regarding metrics instead of measures. Recall from Example \ref{metrics-vs-measures} that if $\phi \in \E^1$ then $$M(\phi) = \beta(\MA(\phi)).$$ Boucksom--Jonsson's proof of this statement in fact proves that $$\nabla_{K_X+B} E_{L_X}(\phi) = \nabla_{K_X+B} \|\MA(\phi)\|_{L_X}.$$ Thus by Theorem \ref{BJ-CY}, namely the non-Archimedean Calabi--Yau theorem, if we can prove that there is a $k>0$ and an $\epsilon>0$ such that for all $\phi \in \E^1$ $$\nabla_{K_X+B} E_{L_X}(\phi)  \geq \epsilon \|\phi\|_{\min},$$ the proof is concluded. But this inequality on $\H^{\NA}$ is (modulo our non-Archimedean language) proven as part of \cite[Proof of Theorem 3.7]{stablemaps}, where we use that $B \in |kL_X|$. The corresponding inequality on $\E^1$ follows by continuity of the extension of mixed Monge--Amp\`ere energies from $\H^{\NA}$ to $\E^1$.\end{proof}
			
			\begin{remark}\label{whichk} As explained in the introduction, this result is the (divisorial) ``log'' version of its (metric) ``twisted'' counterpart \cite[Theorem 3.7]{stablemaps}, which in turn is the algebro-geometric counterpart of analytic results of Hashimoto \cite{hashimoto} and Zeng \cite{zeng} regarding twisted constant scalar curvature K\"ahler metrics and Aoi--Hashimoto--Zheng regarding conical constant scalar curvature K\"ahler metrics \cite[Theorem 1.10]{AHZ} when working over $\C$. Our proof is completely algebro-geometric, and the $k$ needed is explicit: for a $\Q$-line bundle $H$, setting $$\mu(X,L_X)= \frac{-K_X.L_X^{n-1}}{L_X^n},$$ a sufficient condition is that $$\frac{n}{n+1} \mu(X,L_X)L_X - K_X + \frac{k}{2n(n+1)}L_X$$ be nef (which is certainly true for $k \gg 0$). This requirement can be sharpened using analytic techniques when $X$ is smooth and $X$ and $Y$ are defined over $\C$; see Remark \ref{whichk2}.\end{remark} 
			
 We next note the following interpolation result for divisorial stability, analogous to (for example) \cite[Lemma 2.6]{dervan_2016} which concerns log K-stability. In what follows, we assume that both $K_X$ and $B$ are $\Q$-Cartier, so that $K_X+cB$ is $\Q$-Cartier for all $c \in \Q$.
		 	
			\begin{lemma} Suppose $(X,L_X)$ is divisorially semistable and $((X,B);L_X)$ is log divisorially stable. Then $((X,cB);L_X)$ is log divisorially stable for all $0 < c \leq 1$. \end{lemma}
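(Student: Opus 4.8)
The plan is to use the affine dependence of the beta invariant on the boundary coefficient. Fix a divisorial measure $\mu$ on $X\an$; I claim that for every $c\in[0,1]$
\[
\beta_{((X,cB);L_X)}(\mu)=(1-c)\,\beta_{((X,0);L_X)}(\mu)+c\,\beta_{((X,B);L_X)}(\mu).
\]
Granting this, the result is immediate. First I would observe that the two hypotheses already force $K_X$ and $K_X+B$, hence $B$ and hence $K_X+cB$, to be $\Q$-Cartier, so that every invariant appearing in the claim is defined.

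To prove the interpolation identity I would treat the two summands of $\beta$ separately. The entropy term is affine in the boundary: from $A_{(X,\Delta)}(F)=\ord_F(K_Y-\pi^*(K_X+\Delta))+1$ and linearity of pullback on the (birational) model $\pi\colon Y\to X$ carrying $F$, one gets $A_{(X,cB)}(F)=(1-c)A_{(X,0)}(F)+c\,A_{(X,B)}(F)$ for every prime divisor $F$ over $X$; integrating against $\mu$ yields $\operatorname{Ent}_{(X,cB)}(\mu)=(1-c)\operatorname{Ent}_{(X,0)}(\mu)+c\operatorname{Ent}_{(X,B)}(\mu)$. For the energy term I would invoke the continuous differentiability of $M\mapsto\lVert\mu\rVert_M$ on the ample cone (the differentiability theorem quoted above): this makes $H\mapsto\nabla_{H}\lVert\mu\rVert_{L_X}$ a linear functional of the direction $H$, so writing $K_X+cB=(1-c)K_X+c(K_X+B)$ gives
\[
\nabla_{K_X+cB}\lVert\mu\rVert_{L_X}=(1-c)\,\nabla_{K_X}\lVert\mu\rVert_{L_X}+c\,\nabla_{K_X+B}\lVert\mu\rVert_{L_X}.
\]
Summing the two displays produces the claimed interpolation identity.

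Finally I would substitute the hypotheses: divisorial semistability of $(X,L_X)$ gives $\beta_{((X,0);L_X)}(\mu)\ge 0$, while log divisorial stability of $((X,B);L_X)$ gives a fixed $\epsilon>0$ with $\beta_{((X,B);L_X)}(\mu)\ge\epsilon\lVert\mu\rVert_{L_X}$, both for all divisorial measures $\mu$. Since $1-c\ge 0$ and $c>0$ for $0<c\le 1$, the interpolation identity gives $\beta_{((X,cB);L_X)}(\mu)\ge c\epsilon\lVert\mu\rVert_{L_X}$, i.e. $((X,cB);L_X)$ is log divisorially stable with constant $c\epsilon$. I do not expect a genuine obstacle here: the only mildly delicate point is the linearity of $H\mapsto\nabla_{H}\lVert\mu\rVert_{L_X}$, which is a soft consequence of the $C^1$-regularity already quoted, and the whole argument is the divisorial counterpart of the interpolation argument for log K-stability in \cite[Lemma 2.6]{dervan_2016}.
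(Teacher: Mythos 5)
Your proof is correct and follows essentially the same route as the paper's: the paper also writes $\beta_{((X,cB);L_X)}(\mu)=\beta_{(X,L_X)}(\mu)+c\tilde\beta(\mu)$ (an affine dependence on $c$ equivalent to your convex-combination identity, with the entropy and energy-derivative terms handled exactly as you do) and then substitutes the two hypotheses. Your version merely makes explicit the calculation the paper leaves implicit, and your final constant $c\epsilon$ is the precise one.
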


\begin{proof}
We denote the beta invariant of a divisorial measure $\mu$ defined with respect to  $((X,cB);L_X)$ and $(X,L_X)$ by  $\beta_{((X,cB);L_X)}$  and $\beta_{(X,L_X)}$  respectively for clarity. For a divisorial measure $\mu$, we then wish to compare $\beta_{((X,cB);L_X)}(\mu)$ and $\beta_{(X,L_X)}(\mu)$, and we begin with their respective entropy terms $\Ent_{(X,cB)}(\mu)$ and $\Ent_{X}(\mu)$. 

Write $\mu = \sum_j a_j \delta_{b_j\ord F_j}$ for a finite collection of  divisorial valuations $b_j\ord F_j$ on $X$. For a single divisorial valuation $b_j\ord F_j$ with $F_j \subset Y_j$ and $\pi_j: Y_j\to X$ the associated birational morphism, we note \begin{align*}A_{(X,cB)}(b_j\ord F_j) &= b_jA_{(X,cB)}(F_j), \\ &= b_j(\ord_{F_j}(K_{Y_j} - \pi^*(K_X + cB)+1), \\ &= b_j(\ord_{F_j}(K_{Y_j} - \pi^*K_X+1)) - b_jc\ord_{F_j}(\pi^*B),\\ &= A_X(b_j\ord_{F_j})- c\left(b_j\ord_{F_j}(\pi^*B)\right).\end{align*} By linearity we thus obtain $$\Ent_{(X,cB)}(\mu) - \Ent_{X}(\mu) = -c\left(\sum_j b_j\ord_{F_j}(\pi^*B)\right).$$ 

We next consider the dependence on $c$ of the remaining term $ \nabla_{K_{X}+cB}\lVert\mu \rVert_{L_X}$ comprising $\beta_{((X,cB);L_X)}(\mu)$, which by linearity satisfies (using that $B$ is $\Q$-Cartier) $$\nabla_{K_{X}+cB}\lVert\mu \rVert_{L_X} = \nabla_{K_{X}}\lVert\mu \rVert_{L_X} +c  \nabla_{B}\lVert\mu \rVert_{L_X}.$$ Thus if we define, for a divisorial measure $\mu = \sum_j a_j \delta_{b_j\ord F_j}$, a functional $$\tilde \beta(\mu): \M^{\di} \to \R$$ by  $$\tilde \beta(\mu) =   \nabla_{B}\lVert\mu \rVert_{L_X} - \sum_jb_j\ord_{F_j}(\pi^*B),$$ then $$\beta_{((X,cB);L_X)}(\mu) = \beta_{(X,L_X)}(\mu) + c\tilde \beta(\mu).$$ 

By hypothesis there is an $\epsilon>0$ such that for all divisorial measures $\mu$ $$\beta_{(X,L_X)}(\mu) \geq 0 \textrm{ and } \beta_{((X,B);L_X)} \geq \epsilon \|\mu\|_{L_X}.$$ Writing $$\beta_{((X,cB);L_X)}(\mu) = c\left( \beta_{(X,L_X)}(\mu) + \tilde \beta(\mu)\right) + (1-c) \beta_{(X,L_X)}(\mu),$$ we next recall that by hypothesis 
%$$\beta_{(X,L_X)}(\mu)\geq 0$$ and 
$$\beta_{(X,L_X)}(\mu) + \tilde \beta(\mu) \geq \epsilon \lVert\mu \rVert_{L_X}.$$ Thus since $0 < c \leq 1$ we obtain $$\beta_{((X,cB);L_X)}(\mu) \geq c\epsilon \lVert\mu \rVert_{L_X},$$ proving log divisorial stability. \end{proof}	

The following is then an automatic consequence of these two results.

\begin{corollary}\label{body-interp-cor} Suppose $(X,L_X)$ is divisorially semistable. Then there is a $k>0$ such that, for any $B\in |kL|$ such that $(X,B)$ is log canonical and any $m>1$, $((X,(1-1/m)B);L)$ is log divisorially stable.\end{corollary}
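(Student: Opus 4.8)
The plan is to combine the two immediately preceding results, with essentially no additional work. First I would apply the asymptotic divisorial stability theorem to obtain a threshold $k>0$ such that, for every $B \in |kL|$ with $(X,B)$ log canonical, the pair $((X,B);L)$ is log divisorially stable. The key point to record is that this $k$ is produced purely from the geometry of $(X,L_X)$ — concretely from the nefness condition in Remark \ref{whichk} — and in particular is independent of $m$; this is what allows a single $k$ to work uniformly in $m$ as the corollary demands.

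Next, I would fix such a $B$ and any $m>1$, and set $c := 1-1/m$, so that $0 < c \leq 1$. The hypotheses of the interpolation lemma are then satisfied: $(X,L_X)$ is divisorially semistable by assumption, and $((X,B);L_X)$ is log divisorially stable by the previous step. The lemma therefore gives that $((X,cB);L_X) = ((X,(1-1/m)B);L_X)$ is log divisorially stable, which is exactly the claim.

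I do not expect any genuine obstacle, since the substance is entirely carried by the two cited statements; the only items to verify are the routine bookkeeping that $m$ plays no role in the choice of $k$, and that $1-1/m$ lands in the interval $(0,1]$ on which the interpolation lemma operates. If there is one point worth making explicit, it is that log canonicity of $(X,B)$ forces log canonicity of $(X,(1-1/m)B)$ — so that the latter is a bona fide log pair with $K_X+(1-1/m)B$ still $\Q$-Cartier — which is immediate from $A_{(X,(1-1/m)B)}(F) \geq A_{(X,B)}(F) \geq 0$ on $X^{\di}$.
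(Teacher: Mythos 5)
Your proposal is correct and follows exactly the paper's route: the paper states this corollary as an ``automatic consequence'' of the asymptotic stability theorem and the interpolation lemma, which is precisely the two-step combination you carry out (with the additional, harmless verification that $(X,(1-1/m)B)$ remains log canonical).
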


In particular Theorem \ref{intromainthm} produces the following, which proves Theorem \ref{introinterpolation}. We continue the notation of Corollary \ref{body-interp-cor} and let $G$ be the cyclic group of order $m$.

\begin{corollary}\label{end-cor}
Under the hypotheses of Corollary \ref{body-interp-cor}, suppose $\pi: Y \to X$ is the $m$-fold branched cover over $B$, and set $L_Y = \pi^*L_X$. Then $(Y,L_Y)$ is $G$-equivariantly divisorially stable. 
\end{corollary}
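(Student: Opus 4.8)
The plan is to combine Corollary~\ref{body-interp-cor} with Theorem~\ref{main thm}: once the cyclic branched cover is set up, the statement follows formally from these two results.

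First I would record the geometric input. Let $k$ be as in Corollary~\ref{body-interp-cor}, enlarging it if necessary so that the $m$-fold cyclic cover of $X$ branched over a divisor in $|kL_X|$ exists (it suffices that $\tfrac{k}{m}L_X$ be a line bundle; the nefness requirement of Remark~\ref{whichk} persists for all such larger $k$, since one is only adding more of the ample $L_X$). Given $B\in|kL_X|$ with $(X,B)$ log canonical, let $\pi\colon Y\to X$ be this $m$-fold cyclic cover, constructed as the relative spectrum over $X$ of $\bigoplus_{i=0}^{m-1}\mathcal{O}_X\!\left(-i\tfrac{k}{m}L_X\right)$ with multiplication determined by a section cutting out $B$. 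Because $(X,B)$ is log canonical the divisor $B$ is reduced, hence $Y$ is normal; $\pi$ is a finite Galois morphism with Galois group the cyclic group $G$ of order $m$; $L_Y\coloneqq\pi^*L_X$ is ample since $\pi$ is finite; and $K_Y$ is $\Q$-Cartier with $K_Y=\pi^*\!\bigl(K_X+(1-\tfrac1m)B\bigr)$ by Riemann--Hurwitz. Thus the standing hypotheses of Theorem~\ref{main thm}, with $\Delta_Y=0$ and $B$ as the branch divisor, all hold.

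Corollary~\ref{body-interp-cor} then gives that $((X,(1-\tfrac1m)B);L_X)$ is log divisorially stable, and Theorem~\ref{main thm} immediately promotes this to the desired conclusion that $(Y,L_Y)$ is $G$-equivariantly divisorially stable.

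The only point needing genuine care, rather than bookkeeping, is verifying the normality of $Y$ and the $\Q$-Cartierness of $K_Y$ that Theorem~\ref{main thm} requires; both are supplied by the log canonicity of $(X,B)$ --- which forces $B$ reduced --- together with the standard theory of cyclic covers, so I anticipate no substantial obstacle.
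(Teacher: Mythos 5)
Your proposal is correct and follows exactly the paper's route: Corollary \ref{body-interp-cor} gives log divisorial stability of $((X,(1-\tfrac1m)B);L_X)$, and Theorem \ref{main thm} converts this into $G$-equivariant divisorial stability of $(Y,L_Y)$. The extra verifications you supply (divisibility of $k$, normality of $Y$, $\Q$-Cartierness of $K_Y$ via the cyclic-cover construction) are details the paper delegates to the cited reference of Koll\'ar, so there is no substantive difference.
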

			
We refer to Koll\'ar for the construction of $m$-fold branched coverings over Cartier divisors \cite[Section 2.11]{kollar}.

 			\subsection{$G$-equivariant divisorial and uniform K-stability}
		We end by using Corollary \ref{end-cor} to produce constant scalar curvature K\"ahler metrics, for which we need $Y$ to be smooth. We need the following equivariant analogue of Boucksom--Jonsson's work relating divisorial and uniform K-stability, proved using their results.
		
		\begin{theorem}\label{equiv-BJ}
		A polarised variety $(Y,L_Y)$ is $G$-equivariantly divisorially stable if and only if it is uniformly K-stable on $\E^{1,G}$.
		\end{theorem}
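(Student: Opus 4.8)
The plan is to reduce the equivariant statement to the non-equivariant theorem of Boucksom--Jonsson (Example \ref{metrics-vs-measures}) by exploiting the group averaging operator $\phi \mapsto \phi^G = \frac{1}{|G|}\sum_{g \in G} g^*\phi$, which sends $\E^1(L_Y\an)$ onto $\E^{1,G}(L_Y\an)$, and the corresponding averaging of measures, which by Proposition \ref{isomorphism between divisorial measures} (applied to the $G$-action on $Y$ itself, viewing it as a trivial ``cover'') sends $\M^1$ into $\M^{1,G}_Y$. The key point is that the functionals in play --- the Mabuchi functional $M$, the minimum norm $\|\cdot\|_{\min}$, and the beta invariant $\beta$ --- all interact well with this averaging, so that testing stability against $G$-invariant objects is the same as testing the averaged inequality against all objects.

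First I would record the compatibility of the relevant functionals with $G$: since $g \in G \subset \Aut(Y,L_Y)$ acts by automorphisms, $E$, $\|\cdot\|_{\min}$, $\Ent$ and hence $M$ are all $G$-invariant, i.e. $M(g^*\phi) = M(\phi)$ and $\|g^*\phi\|_{\min} = \|\phi\|_{\min}$; this follows from the intersection-theoretic formulae since $g$ is an isomorphism, exactly as in the push-pull computations in the proof of Proposition \ref{G-invarinant measures and energy comparison}. Moreover $M$ is convex along the relevant geodesic-type combinations (or, at the level we need, one has $M(\phi^G) \le \frac{1}{|G|}\sum_g M(g^*\phi) = M(\phi)$, using convexity of the Mabuchi functional together with $G$-invariance of each term), while $\|\phi^G\|_{\min}$ can be bounded below in terms of $\|\phi\|_{\min}$ --- the cleanest route is to use Corollary \ref{energies of G invariant measures} together with the identification $M(\phi) = \beta(\MA(\phi))$ and the Calabi--Yau theorem (Theorem \ref{BJ-CY}) to transfer everything to the measure side, where $\MA(\phi^G)$ relates to the average of $g_*\MA(\phi)$.

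The forward direction (divisorially stable $\Rightarrow$ uniformly K-stable on $\E^{1,G}$) is then essentially immediate: given $\phi \in \E^{1,G}$, the measure $\mu = \MA(\phi)$ lies in $\M^{\di,G}_Y$ by Proposition \ref{G-invarinant measures and energy comparison}, and $G$-equivariant divisorial stability gives $\beta(\mu) \ge \epsilon\|\mu\|_{L_Y}$; by the Boucksom--Jonsson equality $M(\phi) = \beta(\MA(\phi))$ and the fact that $\|\mu\|_{L_Y}$ is comparable to $\|\phi\|_{\min}$ on $\E^1$ (this comparability is part of the non-equivariant equivalence and survives restriction), we conclude $M(\phi) \ge \epsilon'\|\phi\|_{\min}$. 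Conversely, suppose $(Y,L_Y)$ is uniformly K-stable on $\E^{1,G}$ and let $\mu \in \M^{\di,G}_Y$ be an arbitrary $G$-invariant divisorial measure; by Proposition \ref{G-invarinant measures and energy comparison} there is a $G$-invariant $\phi \in \E^{1,G}$ with $\MA(\phi) = \mu$, and then $\beta(\mu) = M(\phi) \ge \epsilon\|\phi\|_{\min} \ge \epsilon'\|\mu\|_{L_Y}$, giving $G$-equivariant divisorial stability.

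The main obstacle is the precise comparability $\|\phi\|_{\min} \asymp \|\MA(\phi)\|_{L_Y}$ on $G$-invariant inputs and, relatedly, making sure the constant $\epsilon$ does not degenerate under averaging: one must check that the supremum defining $\|\mu\|_{L_Y}$, which by Corollary \ref{energies of G invariant measures} may be taken over $\E^{1,G}$, is genuinely comparable to the $\min$-norm of the Calabi--Yau solution, and that both sides of the stability inequality are preserved (up to a fixed multiplicative constant) when passing between a metric and its $G$-average. I expect this to follow from the quantitative estimates already present in Boucksom--Jonsson's proof of the non-equivariant equivalence \cite{bj_non_arch_ii} together with the energy-preservation statements established above (Propositions \ref{pushforward-pullback-prop} and \ref{equivalence of energies}, Corollary \ref{energies of G invariant measures}), so the proof is ultimately a bookkeeping exercise layering the group action on top of their argument rather than anything genuinely new.
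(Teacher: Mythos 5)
Your reverse direction ($\E^{1,G}$-uniform K-stability $\Rightarrow$ $G$-equivariant divisorial stability) is essentially the paper's argument and is fine. But your forward direction contains a genuine gap: you assert that for $\phi \in \E^{1,G}$ the measure $\MA(\phi)$ lies in $\mathcal{M}^{\di,G}_Y$ ``by Proposition \ref{G-invarinant measures and energy comparison}''. That proposition says the opposite implication (the Calabi--Yau solution of a $G$-invariant \emph{divisorial} measure is a $G$-invariant metric), and the claim itself is false: the Monge--Amp\`ere measure of a general finite-energy metric is a finite-norm measure, not a divisorial one, so the defining inequality of $G$-equivariant divisorial stability cannot be applied to it directly. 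The missing step --- which is the actual content of the first half of the paper's proof --- is to extend the inequality $\beta(\mu)\geq\epsilon\lVert\mu\rVert_{L_Y}$ from $G$-invariant divisorial measures to all of $\mathcal{M}^{1,G}$: one approximates $\phi\in\E^{1,G}$ by a decreasing sequence of $G$-invariant Fubini--Study metrics $\phi_k$ (Proposition \ref{isomorphism between metrics of finite energy}), notes that $\MA(\phi_k)$ is then a sequence of $G$-invariant \emph{divisorial} measures converging to $\MA(\phi)$, and passes to the limit using continuity of the Monge--Amp\`ere operator and of the functionals involved. Without this density argument the forward implication does not close. (Your worry about the comparability $\lVert\phi\rVert_{\min}\asymp\lVert\MA(\phi)\rVert_{L_Y}$ is legitimate but harmless; it is a dimensional-constant matter already settled in Boucksom--Jonsson and the paper likewise defers to it.)

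Separately, the averaging framework $\phi\mapsto\phi^G=\frac{1}{|G|}\sum_g g^*\phi$ announced in your first two paragraphs is both unnecessary and suspect. If it worked as a reduction to the non-equivariant Boucksom--Jonsson theorem, it would show that $G$-equivariant divisorial stability is equivalent to divisorial stability itself --- a statement the authors explicitly leave as an expectation rather than a theorem. The obstruction is visible in your own inequality: convexity gives $M(\phi^G)\leq M(\phi)$, which runs the wrong way, and there is no lower bound for $\lVert\phi^G\rVert_{\min}$ in terms of $\lVert\phi\rVert_{\min}$ (the average of a highly non-invariant metric of large norm can be trivial). Since your third paragraph does not actually use the averaging, the fix is simply to delete that machinery and insert the approximation argument above.
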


\begin{proof}

Let $\mu \in \M^{1,G}$ and let $\phi \in \E^{1,G}$ be such that $$\MA(\phi) = \mu,$$ where $G$-invariance of such a $\phi$ follows from Proposition \ref{G-invarinant measures and energy comparison}. By  Proposition \ref{isomorphism between metrics of finite energy}, there is a sequence $\phi_k \in \H^{\NA,G}$ of $G$-invariant Fubini--Study metrics decreasing to $\phi$, and in particular $\MA(\phi_k)$ converges to $\MA(\phi) = \mu$ by continuity of the Monge--Amp\`ere operator. Thus we have produced a sequence of $G$-invariant divisorial measures converging to $\mu$. It follows that $G$-equivariant divisorial stability is equivalent to the existence of an $\epsilon>0$ such that for all $\mu\in \M^{1,G}$ $$\beta(\mu) \geq \epsilon \|\mu\|_{L_Y},$$ by continuity of the quantities involved.

We next show that the latter condition is equivalent to  uniform K-stability on $\E^{1,G}$. As the content of their proof that divisorial stability is equivalent to uniform K-stability on $\E^1$ (along with the non-Archimedean Calabi--Yau theorem), Boucksom--Jonsson prove that for any $\phi\in \E^1$ we have equality \begin{equation}\label{BJ-eqn} M(\phi) = \beta(\MA(\phi));\end{equation} see Example \ref{metrics-vs-measures}. But  Theorem \ref{G-invarinant measures and energy comparison} implies that the Monge--Amp\`ere operator induces a bijection between finite norm $G$\emph{-invariant} measures and finite energy $G$-\emph{invariant} metrics (modulo addition of constants), meaning that Equation \eqref{BJ-eqn} proves the equivalence of the conditions $$\beta(\mu) \geq \epsilon \|\mu\|_{L_Y} \textrm{ for all } \mu \in \M^{1,G}$$ and $$M(\phi) \geq \epsilon \|\phi\|_{\min} \textrm { for all } \phi \in \E^{1,G}$$ and hence proves the result.  \end{proof}

We now take the field $k$ over which the varieties $X$ and $Y$ are defined to be $\C$. We  use the following important result of Li \cite[Theorem 1.10]{chi-li-csck}.

\begin{theorem} Suppose $Y$ is smooth. If $(Y,L_Y)$ is uniformly K-stable on $\E^{1,G}$, then  $c_1(L)$ admits a constant scalar curvature K\"ahler metric. \end{theorem}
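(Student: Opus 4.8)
The plan is to quote Li's theorem \cite[Theorem 1.10]{chi-li-csck} directly. Its hypothesis is precisely $G$-equivariant uniform K-stability on $\E^{1,G}$ (in the formulation with respect to filtrations or models discussed in \cite[Section 2.1.3]{chi-li-csck}, which by Theorem \ref{equiv-BJ} and the equivalences recalled there coincides with the one we use), and its conclusion is exactly that $c_1(L_Y)$ admits a constant scalar curvature K\"ahler metric. So, modulo the bookkeeping that identifies our notion of $G$-equivariant uniform K-stability on $\E^{1,G}$ with the condition appearing in \emph{loc.\ cit.}, there is nothing further to prove; the smoothness hypothesis on $Y$ is of course essential, since the statement concerns genuine K\"ahler metrics.

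For the reader's benefit I would recall the shape of Li's argument rather than reproduce it. The approach is variational. Since $G$ acts on the space of K\"ahler metrics in $c_1(L_Y)$ and the Mabuchi functional is $G$-invariant, averaging over $G$ reduces the problem to the $G$-invariant subspace of K\"ahler potentials. Existence of a constant scalar curvature K\"ahler metric is then equivalent to coercivity of the Mabuchi functional on that subspace: coercivity yields a minimiser which, by the regularity theory built on the Chen--Cheng \emph{a priori} estimates, is a smooth constant scalar curvature K\"ahler metric, and conversely. Coercivity is in turn detected along $G$-invariant geodesic rays --- to such a ray one associates a non-Archimedean potential lying in $\E^{1,G}$, and the asymptotic slope of the Mabuchi functional along the ray is bounded below by the non-Archimedean Mabuchi functional of that potential --- and the uniform K-stability hypothesis on $\E^{1,G}$ supplies exactly the required lower bound.

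The main obstacle is not on our side at all: it is the analytic heart of \cite{chi-li-csck}, namely the construction and regularity of geodesic rays, the slope formula comparing the Archimedean and non-Archimedean Mabuchi functionals along them, and the a priori estimates underpinning the coercivity criterion, none of which is reproved here. The only point demanding care from us is the identification of definitions, which is routine given the equivalences already established, in particular Theorem \ref{equiv-BJ} and Corollary \ref{energies of G invariant measures}.
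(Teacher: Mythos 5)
Your proposal is correct and matches the paper exactly: the paper states this result as a direct citation of Li \cite[Theorem 1.10]{chi-li-csck} with no proof of its own, relying (as you do) on the earlier remark that uniform K-stability on $\E^1$ agrees with Li's formulations via filtrations or models. Your sketch of Li's variational argument is accurate background but not part of what needs to be verified here.
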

			
From this we obtain the following consequence of Corollary \ref{end-cor}, which proves Corollary \ref{introcor}. 

\begin{theorem}
Let $(X,L_X)$ be a divisorially semistable smooth polarised variety. There is a $k>0$ such that if we let
\begin{enumerate}[(i)]
\item  $B \in |kL_X|$ be such that $(X,B)$ is log canonical,
\item and let $\pi: Y \to X$ be the $m$-fold cover of $X$ branched over $B$, 
\item and assume $Y$ is smooth,
\end{enumerate}then $Y$ admits a constant scalar curvature K\"ahler metric in $c_1(L_Y)$, where $G$ is the associated cyclic group of degree $m$ and $L_Y = \pi^*L_X$.
\end{theorem}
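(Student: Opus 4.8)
The plan is to chain together the three ingredients already in place. First I would fix $k>0$ to be the constant produced by Corollary \ref{body-interp-cor}, which depends only on the geometry of $(X,L_X)$ (explicitly, via the nefness condition of Remark \ref{whichk}). Given $B \in |kL_X|$ with $(X,B)$ log canonical and the $m$-fold branched cover $\pi\colon Y \to X$, Corollary \ref{end-cor} then applies verbatim: it tells us that $(Y,L_Y)$ is $G$-equivariantly divisorially stable, where $L_Y = \pi^*L_X$ is ample since $\pi$ is finite, and $G$ is the cyclic group of order $m$ acting on $(Y,L_Y)$ with quotient $(X,L_X)$. Behind this lies Theorem \ref{main thm}, which is legitimate here because $Y$ is smooth, hence normal with $K_Y$ a $\Q$-Cartier divisor, and Riemann--Hurwitz gives $K_Y = \pi^*(K_X+(1-1/m)B)$.

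Next I would invoke Theorem \ref{equiv-BJ}: $G$-equivariant divisorial stability of $(Y,L_Y)$ is equivalent to uniform K-stability on $\E^{1,G}$, so $(Y,L_Y)$ is uniformly K-stable on $\E^{1,G}$. Finally, since by hypothesis $Y$ is smooth and the ground field is $\C$, the theorem of Li \cite[Theorem 1.10]{chi-li-csck} recalled above yields a constant scalar curvature K\"ahler metric in $c_1(L_Y)$, which completes the argument.

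Since every step is an invocation of a result established earlier in the paper, there is no real obstacle internal to the deduction; the proof is a concatenation of Corollary \ref{end-cor}, Theorem \ref{equiv-BJ}, and Li's theorem. The genuine constraints lie entirely in the hypotheses rather than in the argument: one needs $Y$ to be smooth (the branched cover of a smooth $X$ along a \emph{singular} log canonical divisor $B$ is typically singular, which is precisely why smoothness of $Y$ is an extra assumption rather than automatic), and one needs a divisorially semistable $(X,L_X)$ to start from --- so in practice it is the scarcity of known divisorially semistable polarised varieties, not anything in the proof, that limits the scope of the statement.
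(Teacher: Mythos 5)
Your proposal is correct and follows exactly the paper's route: the theorem is obtained by concatenating Corollary \ref{end-cor} (giving $G$-equivariant divisorial stability of $(Y,L_Y)$), Theorem \ref{equiv-BJ} (equivalence with uniform K-stability on $\E^{1,G}$), and Li's theorem producing the cscK metric under the smoothness hypothesis. There is nothing to add.
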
	
			
\begin{remark}\label{whichk2}
As mentioned in the introduction, the existence of such metrics in this situation is also consequence of work of Arezzo--Della Vedova--Shi \cite{ADS}, provided one replaces the assumption that $(X,L_X)$ be divisorially semistable with the assumption that the (Archimedean) Mabuchi functional be bounded below on the space of K\"ahler metrics in $c_1(L_X)$ (which holds, in particular, when $c_1(L_X)$ admits a cscK metric). In fact, the $k$ they obtain as sufficient is slightly more general than our work, essentially as they use analytic techniques and work of Song--Weinkove \cite{song-weinkove}, whereas our work relies on the entirely algebro-geometric \cite[Theorem 3.7]{stablemaps} which is slightly weaker. We refer to \cite[Section 6]{ADS} for many examples of applications.
\end{remark}					
			
		%			
		%			From Corollary \ref{equivalence of energies reverse} we know that $ \lVert \mu \rVert_{L_Y-tK_Y} = \lVert \pi_*\mu \rVert_{L_X - tK_{X,(1-\frac{1}{m})D}}$. Thus, we have $\beta(\mu) = \beta(\pi_*\mu)$, 
		%			i.e. $(Y,L_Y)$ is divisorially (semi-)stable.
		%			
		%(NOTE: I THINK WE NEED SOMETHING LIKE "DIVISORIAL STABILITY NEEDS TO BE CHECKED ONLY ON G-INVARIANT TEST CONFIGURATIONS SIMILAR TO DATAR SZEKELYHIDI. WE CAN DISCUSS THIS)
		%			For the other direction, let $(Y,L_Y)$ be divisorially (semi-)stable. We need to show that for any $\nu \in \mathcal{M}^{\di}_X$, $\beta(\nu)\geq 0$. Let $\nu = \sum a_i \delta_{\operatorname{ord}_{D_i}}$ for some divisors $D_i\subset X'$. Then, we have $\pi^* \nu = \sum a_i \sum_{\pi(u_j) = \operatorname{ord}_{D_i}} e_j^i \delta_{u_j}$. From Proposition \ref{isomorphism between divisorial measures} we have that $\nu = \pi_*\pi^* \nu $, and hence from the first part of the proof, we have $\beta(\pi^* \nu) = \beta(\pi_*\pi^* \nu) = \beta(\nu)$, hence $((X,D);L_X)$ is divisorially (semi-)stable.		
		
		\bibliographystyle{alpha}
		\bibliography{bibliography.bib}

	\end{document}